\newcommand{\optionaldesc}[2]{%
  \phantomsection
  #1\protected@edef\@currentlabel{#1}\label{#2}%
}
\newlist{compitem}{itemize}{4}
\setlist[compitem,1]{nolistsep,label=$\bullet$}
\colorlet{link}{red!60!black}
\theoremstyle{plain}
\newtheorem{theorem}{Theorem}
\newtheorem{lemma}[theorem]{Lemma}
\newtheorem{proposition}[theorem]{Proposition}
\newtheorem{corollary}[theorem]{Corollary}
\theoremstyle{definition}
\theoremstyle{remark}
\newtheorem{remark}[theorem]{Remark}
\newcommand{\labeltext}[3][]{%
    \@bsphack%
    \csname phantomsection\endcsname
    \def\tst{#1}%
    \def\labelmarkup{\textcolor{link}}
    \def\refmarkup{}%
    \ifx\tst\empty\def\@currentlabel{\refmarkup{#2}}{\label{#3}}%
    \else\def\@currentlabel{\refmarkup{#1}}{\label{#3}}\fi%
    \@esphack%
    \labelmarkup{#2}
}
\renewcommand{\d}{\mathop{}\!\mathrm{d}} 
\newcommand{\N}{\mathbb{N}} 
\newcommand{\Z}{\mathbb{Z}} 
\renewcommand{\P}{{\mathbb{P}}} 
\newcommand{\E}{\mathbb{E}} 
\newcommand{\Lp}{\mathbb{L}} 
\newcommand{\1}{\mathbbm{1}} 
\newcommand{\esp}[1]{\mathbb{E}\left[#1\right]} 
\newcommand{\proba}[1]{\mathbb{P}\left(#1\right)} 
\newcommand{\esptilde}[1]{\widetilde{\mathbb{E}}\left[#1\right]} 
\newcommand{\probatilde}[1]{\widetilde{\mathbb{P}}\left(#1\right)} 
\DeclareMathOperator*{\var}{\mathcal{V}ar} 
\DeclareMathOperator*{\vartilde}{\widetilde{\mathcal{V}ar}} 
\DeclareMathOperator*{\covtilde}{\widetilde{\mathcal{C}ov}}
\newcommand{\e}{\varepsilon} 
\newcommand{\T}{\mathcal{T}} 
\newcommand{\F}{\mathcal{F}} 
\newcommand{\birth}{\mathsf{b}} 
\newcommand{\coal}{\mathsf{c}} 
\renewcommand{\H}{\mathsf{H}} 
\newcommand{\Height}{\mathsf{Height}} 
\newcommand{\h}{\mathsf{h}} 
\newcommand{\Xb}{\mathrm{\mathbf{x}}} 
\newcommand{\X}{\mathrm x} 
\newcommand{\XXb}{\mathrm{\mathbf{X}}} 
\newcommand{\XX}{\mathrm X} 
\newcommand{\V}{\mathbb{V}} 
\newcommand{\Fbb}{\mathbb{F}} 
\newcommand{\A}{\mathbb{A}} 
\newcommand{\cv}[1][n]{\enskip\mathop{\longrightarrow}^{}_{#1 \to \infty}\enskip}
\newcommand{\cvloi}[1][n]{\enskip\mathop{\longrightarrow}^{(d)}_{#1 \to \infty}\enskip}
\newcommand{\cvproba}[1][n]{\enskip\mathop{\longrightarrow}^{\P}_{#1 \to \infty}\enskip}
\newcommand{\cvLp}[1][p]{\enskip\mathop{\longrightarrow}^{\Lp^{#1}}_{n \to \infty}\enskip}
\DeclarePairedDelimiter\ceil{\lceil}{\rceil}
\def\dloc{\mathrm{d_{loc}}}
\newcommand\br[1]{\llbracket #1 \rrbracket}
\let\originalleft\left
\let\originalright\right
\renewcommand{\left}{\mathopen{}\mathclose\bgroup\originalleft}
\renewcommand{\right}{\aftergroup\egroup\originalright}
\DeclareSymbolFont{extraup}{U}{zavm}{m}{n}
\DeclareMathSymbol{\vardspade}{\mathalpha}{extraup}{81}
\DeclareMathSymbol{\varheart}{\mathalpha}{extraup}{86}
\DeclareMathSymbol{\vardiamond}{\mathalpha}{extraup}{87}
\DeclareMathSymbol{\varclub}{\mathalpha}{extraup}{84}
\renewcommand*{\@fnsymbol}[1]{\ensuremath{\ifcase#1\or  \vardspade \or \varheart \or \vardiamond\or \varclub \or \bigstar \or
   \mathsection\or \mathparagraph\or \|\or **\or \dagger\dagger   \or \ddagger\ddagger \else\@ctrerr\fi}}
\author{
\'Etienne Bellin 
\thanks{CMAP, \'Ecole polytechnique, Institut Polytechnique de Paris, 91120 Palaiseau, France, \textsf{etienne.bellin@polytechnique.edu}
} \qquad  
Arthur Blanc-Renaudie 
\thanks{Tel Aviv University, Israel. Supported by the ERC consolidator grant 101001124 (UniversalMap). \newline \textsf{ablancrenaudiepro@gmail.com}
} 
\qquad  
Emmanuel Kammerer 
\thanks{CMAP, \'Ecole polytechnique, Institut Polytechnique de Paris, 91120 Palaiseau, France, \textsf{emmanuel.kammerer@polytechnique.edu}
}
 \qquad  
Igor Kortchemski 
\thanks{CNRS \& CMAP, \'Ecole polytechnique, Institut Polytechnique de Paris, 91120 Palaiseau, France, \textsf{igor.kortchemski@math.cnrs.fr}
} 
}
\title{Uniform attachment with freezing}
\numberwithin{equation}{section}
\begin{document}
\date{}
\vspace{-2cm}
\maketitle 
\begin{abstract}
In the classical model of random recursive trees, trees are recursively built by attaching new vertices to old ones. What happens if vertices are allowed to freeze, in the sense that new vertices cannot be attached to already frozen ones?  We are interested in the impact of freezing on the height of such trees. 
\end{abstract} 

\begin{figure}[h!]
 \centering
    \makeatletter\edef\animcnt{\the\@anim@num}\makeatother
\animategraphics[label=myAnim,scale=1.2,poster=last]{20}{film1/images_Partie}{1}{28} 
   \mediabutton[jsaction={anim.myAnim.playFwd();}]{\scalebox{1.5}[1.2]{\strut $\vartriangleright$}}
   \mediabutton[jsaction={anim.myAnim.pause();}]{\scalebox{1.5}[1.2]{\strut $\shortparallel$}}
    \caption{\label{simu1}Simulation of a tree of size $10000$ built by  uniform attachment with freezing, when the number of active (i.~e.~non-frozen) vertices roughly evolves as a positive fraction of the total number of vertices. Frozen vertices are blue; active  vertices are red. The  animation (played with Acrobat Reader) shows the resulting process as the number of steps increases.}
\end{figure}

\clearpage
\tableofcontents

\section{Introduction}
\label{sec:intro}

Random graphs are instrumental in the study of real-world networks. Uniform recursive trees (sometimes also called  uniform attachment trees) are one of such models. They are constructed recursively by starting with one single vertex, and successively attaching new vertices to a previous existing vertex, chosen uniformly at random. This model has been introduced in \cite{NR70} in the context of system generation, and has received considerable interest since, starting with graph-theoretical properties such as the height, number of leaves, etc. (see e.g.~\cite{Pit94} and references therein, as well as the survey \cite{SM95}).  Recursive trees have been proposed as models for the spread of epidemics  \cite{Moo74}, the family trees of preserved copies of ancient or medieval texts \cite{NH82}, pyramid schemes \cite{Gas77}, internet interface maps \cite{JKZV02} and appear in the study of Hopf algebras \cite{GL89}. Very interesting connections have been made with other probabilistic objects such as the Bolthausen-Sznitman coalescent \cite{GM05} and elephant random walks \cite{Kur16}. Uniform recursive trees have also been extended in several directions, for instance by introducing deterministic weights \cite{KR01} or random weights with a random environment \cite{BV06}.

In the present work, we introduce and study a modification of this model by introducing \emph{freezing}, in that existing vertices can freeze and new vertices cannot be attached to frozen vertices. 

Our motivation is twofold. First, in the context of real-world networks such mechanisms are natural: for instance, on the social network Twitter a user can choose to set their account to ``private'' which prevents strangers from ``following'' them; also performing an infection-tracing of an SIR epidemics falls within this framework (see Section \ref{sec:SIR}). Second, it is natural to investigate from a mathematical point of view the impact of freezing in dynamically-built random graph models. In particular, in a companion paper \cite{BBKKbis23+}, we investigate the regime where  the number of active vertices roughly evolves  as the total number of vertices to the power $\alpha$, for fixed $\alpha \in (0,1]$, we describe a phase transition where the macroscopic geometry of our model drastically changes according to the value of $\alpha$. Let us also mention the work \cite{BGY22}, which considers a growth-fragmentation-isolation process on random recursive trees in the context of contact tracing, see also \cite{Ber22}.

\paragraph{Uniform attachment with freezing.} Let us first define our model.  In order to incorporate freezing in the model of uniform attachment trees and to obtain results in a rather general setup, we shall fix beforehand the steps where an attachment takes place and the steps where freezing takes place. Specifically, our input is a deterministic sequence $\Xb=(\X_i)_{i \geq 1}$  of elements of $\{-1,+1\}$. Starting from a sole active vertex, we recursively build random trees by reading the elements of the sequence one after the other, by applying a ``freezing'' step when reading $-1$ (which amounts to freezing an active vertex chosen uniformly at random) and a ``uniform attachment'' step when reading $+1$ (which amounts to attaching a new vertex to an active vertex chosen uniformly at random). 

More precisely, given  a sequence $\Xb=(\X_i)_{i \geq 1}\in \{-1,+1\}^\N$, we set $S_{0}(\Xb) \coloneqq1$ and for every $n \geq 1$
\begin{equation}
\label{eq:defintro}S_n(\Xb) \coloneqq 1+ \sum_{i=1}^n \X_i \qquad \text{then} \qquad \tau(\Xb) \coloneqq \inf \{n \geq 1 : S_{n}(\Xb)=0\}. 
\end{equation}
Observe that $\tau(\Xb)$, if it is finite, is the first time when all the vertices are frozen, so that afterwards the tree does not evolve any more. 
For every $0 \leq n \leq  \tau(\Xb)$, let $\T_n(\Xb)$ be the  random tree recursively built in this fashion after reading the first $n$ elements of $\Xb$ (see {Algorithm \ref{algo1} in }Sec.~\ref{ssec:def} for a precise definition and Fig.~\ref{fig exemple arbre recursif avec gel} for an example). 

Let us comment on our choice of parametrization. It would have been possible to define the model starting with a random sequence $\Xb$, but the choice of a deterministic sequence defines a more general model, for which our results can then be applied.

\paragraph{Local limits.}
When $\tau(\Xb)=\infty$, the next result allows to give a meaning to $\T_{\infty}(\Xb)$ as a local limit of finite trees (see Sec.~\ref{sec:local} for background on the local topology).
\begin{theorem}
\label{thm:cvlocale}
Let $(\Xb^n)_{n\geq 0}$ be a sequence of elements of $\{-1,1\}^\N$. Suppose that there exists $\Xb$ such that $\tau(\Xb)=\infty$ and such that for all $i\geq 1$, $\Xb^n_i = \Xb_i$ for all $n$ large enough. Then the following assertions are equivalent:
\begin{enumerate}
    \item[{(I)}] The sequence of trees $(\T_n(\Xb^n))_{n\geq 0}$ converges locally, in distribution.
    \item[{(II)}] The sequence of trees $(\T_n(\Xb))_{n\geq 0}$ converges locally, almost surely, towards {\sout{a tree}} $\T_\infty(\Xb)$.
    \item[{(III)}] The sum $\sum_{i\geq 1} \frac{1}{S_i(\Xb)} \mathbbm 1_{\{\X_i=-1\}}$ diverges.
\end{enumerate}
In this case $\T_\infty(\Xb)$ is the local limit of  $(\T_n(\Xb^n))_{n\geq 0}$.
\end{theorem}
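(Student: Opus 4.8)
The plan is to set up the equivalences via a chain: first relate the local convergence in (1) and (2) to a purely combinatorial criterion on whether the ``root component'' eventually stabilizes, then identify that criterion with the divergence of the series in (3). The key structural observation is that in $\T_n(\Xb)$, the neighbourhood of a fixed vertex $v$ (say the initial vertex, or more generally any vertex that has appeared by some finite time) stops changing once $v$ has frozen \emph{and} all of $v$'s active descendants have frozen as well. Because $\Xb^n$ agrees with $\Xb$ on any fixed prefix for $n$ large, the trees $\T_n(\Xb^n)$ and $\T_n(\Xb)$ can be coupled to coincide up to a growing time, so (1) and (2) concern the same limiting object and it suffices to work with the fixed sequence $\Xb$ having $\tau(\Xb)=\infty$. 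For almost-sure local convergence one needs that every fixed ball around the root eventually freezes entirely; the natural approach is to track, for each vertex, the ``active subtree hanging from it'' and show this is a.s.\ finite iff the series converges.

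The heart of the matter is a second-moment / Borel–Cantelli style analysis of when the root (and then inductively each vertex at a given height) gets frozen. At step $i$ with $\X_i=-1$, conditionally on the current configuration, a uniformly chosen active vertex is frozen, so the root — if still active — is frozen with probability $1/S_{i}(\Xb)$ (here $S_i(\Xb)$ counts the active vertices, and $S_{i-1}(\Xb)=S_i(\Xb)+1$ at a freezing step, so up to a harmless shift the relevant probability is $\asymp 1/S_i(\Xb)$). Hence the probability that the root is still active after step $n$ is a product $\prod (1 - \Theta(1/S_i(\Xb)))$ over freezing steps $i\le n$, which tends to $0$ iff $\sum_{i} \frac{1}{S_i(\Xb)}\mathbbm 1_{\{\X_i=-1\}}=\infty$. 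So (3) is exactly the condition under which the root freezes almost surely. One then has to upgrade ``the root freezes'' to ``every finite ball around the root freezes''. This should follow by iterating the argument: conditionally on the tree at the time a given active vertex $w$ is created, the subsequent dynamics restricted to the active part is again an instance of the same model (with a shifted sequence and a different starting number of active vertices), and crucially the tail series $\sum_{i\ge k}\frac{1}{S_i(\Xb)}\mathbbm 1_{\{\X_i=-1\}}$ still diverges for every $k$; so each active vertex freezes a.s., and a union bound over the (a.s.\ finitely many, at any fixed time) active vertices in a ball plus a standard argument controlling how many new vertices can attach before freezing gives that the ball stabilizes. Conversely, if the series converges, then with positive probability the root never freezes, and on that event new vertices keep attaching to an ever-growing active component, so the ball of radius $1$ around the root does not stabilize and local convergence fails — this gives $\neg(3)\Rightarrow\neg(2)$, and combined with the coupling also $\neg(3)\Rightarrow\neg(1)$.

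For the implication $(1)\Rightarrow(3)$ (equivalently $(2)\Rightarrow(3)$ given the coupling), I would argue contrapositively as just sketched: failure of (3) yields a positive-probability event on which the degree of the root is unbounded along the sequence, precluding local convergence in distribution (the limit law would have to assign the root an a.s.\ finite degree). The remaining implication $(3)\Rightarrow(2)$ is the substantive one and is handled by the freezing analysis above; $(2)\Rightarrow(1)$ is immediate from the coupling, and the final sentence identifying the limit as $\T_\infty(\Xb)$ is then automatic.

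The main obstacle I anticipate is the inductive/uniform control in the $(3)\Rightarrow(2)$ direction: one must show not merely that each individual vertex freezes a.s., but that, for a fixed radius $r$, the ball $B_r$ in $\T_n(\Xb)$ is eventually constant — which requires simultaneously (a) that all vertices that will ever lie in $B_r$ have appeared by some a.s.\ finite time, (b) that they all eventually freeze, and (c) that no vertex at distance exactly $r$ from the root which is about to enter $B_r$ keeps spawning children forever. Point (a) needs an argument that a vertex which has frozen has only finitely many children, and that the number of vertices within distance $r$ that are ever active is finite a.s.; this is where one has to be careful that the divergence of the series is used at \emph{every} tail, and possibly to invoke a monotonicity or domination argument comparing the active subtree of a late-created vertex to a fresh copy of the model with the shifted environment. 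I would isolate this as a lemma: ``for any sequence with $\tau=\infty$ and divergent freezing series, every vertex a.s.\ has finite degree and finite active progeny,'' and build the proof of Theorem~\ref{thm:cvlocale} on top of it.
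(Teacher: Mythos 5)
Your plan follows the same overall route as the paper: reduce local convergence to a per-vertex freezing condition, compute the survival probability of an individual vertex as a product over $-1$ steps, and use a coupling on prefixes to transfer between $(\T_n(\Xb^n))$ and $(\T_n(\Xb))$. But two of the steps you flag as delicate are either under-argued or needlessly complicated compared to what is actually required.

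First, the ``obstacle'' you anticipate in $(3)\Rightarrow(2)$ — controlling which vertices ever enter a ball, how many children they spawn, the active progeny of late vertices — dissolves if you use the right criterion. For an \emph{increasing} sequence of rooted trees, almost-sure local convergence is equivalent to each vertex having a finite limiting degree; since a vertex's degree is fixed once it freezes, it suffices that every vertex freezes a.s. This sidesteps the inductive ``ball stabilizes'' bookkeeping entirely: if every vertex of the union tree has finite degree, then each ball $B_r$ is automatically finite and eventually captured. Your proposed lemma about ``finite active progeny'' is stronger than needed and would take more work to prove than the theorem itself.

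Second, your argument for $\neg(3)\Rightarrow\neg(2)$ is gappy. You say that on the event where the root never freezes, ``new vertices keep attaching to an ever-growing active component, so the ball of radius $1$ around the root does not stabilize.'' But the ball of radius $1$ is the root's immediate neighbourhood, and what you must show is that the \emph{root's own degree} diverges — growth of the active component is not enough. That requires a second Borel--Cantelli argument over the $+1$ steps: conditionally on the root never freezing (an event involving only $-1$ steps), the attachment events at $+1$ steps are independent with probabilities $1/S_i$, and one must check $\sum_{i:\X_i=+1} 1/S_i = \infty$. This last fact is not automatic and needs the (easy but non-trivial) observation that since $\tau(\Xb)=\infty$ the walk never hits $0$, so a positive proportion of the steps are $+1$ and $S_i \leq i+1$, giving divergence. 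Without supplying this, your contrapositive does not close.
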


\paragraph{Examples.} It is interesting to note that this model encompasses the two classical models of random recursive trees and random uniform plane trees:
\begin{enumerate}[topsep=0pt,itemsep=-1ex,partopsep=1ex,parsep=1ex]
\item[--] when $\X_{i}=1$ for every $i \geq 1$, then $\T_{n}{(\Xb)}$ is a random recursive tree on $n$ vertices built by uniform attachment;
\item[--] when $\XXb=(\XX_{i})_{i \geq 1}$ is a sequence of {non constant} i.i.d.~uniform random variables on $ \{-1,+1\}$, then for every $n \geq 1$, conditionally given $\tau(\XXb)=n$, $\T_{n}{(\XXb)}$ is a uniform plane tree when the plane order among the vertices of $\T_{n}({\XXb})$ is chosen uniformly at random. \end{enumerate}
More generally we have the following result, in which we consider the case of random sequences.  In this case we will write $\XXb$ in upper case rather than $\Xb$ to emphasize the fact that the sequence is random. And, the law of $(\T_n(\XXb))_{n\geq 0}$ given the random sequence $\XXb$ follows the description above where $\XXb$ is considered to be fixed. In other words, we first choose $\XXb$ randomly, then, conditionally on $\XXb$, we construct $\T_n(\XXb)$ following Algorithm \ref{algo1} where we consider the sequence $\XXb$ to be deterministic. 

\begin{theorem}
\label{thm:BGW}
Let $p \in [0,1)$ and let $\XXb =(\XX_i)_{i\geq 1}$ be a  sequence of i.i.d random variables such that ${\P(X_1=+1)=p}$ and $\P(\XX_1=-1)=1-p$. Then $\T_\infty (\XXb)$ has the law of a Bienaym\'e tree with offspring distribution $\mu$ given by $\mu(k)= (1-p) p ^{k}$ for $k \geq 0$.
\end{theorem}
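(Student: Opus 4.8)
The plan is to realise $\T_\infty(\XXb)$ as the genealogical tree of a continuous-time branching process, where the branching property becomes transparent.

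\emph{Reformulation of the dynamics.} Since $\XXb$ is i.i.d., Algorithm~\ref{algo1} can be described as follows: at each step one selects an active vertex uniformly at random and then, \emph{independently}, reads $\XX_i$ to decide whether to attach a new child to it (if $\XX_i=+1$) or to freeze it (if $\XX_i=-1$). As the selection does not depend on the sign $\XX_i$, this is the same as: repeatedly pick a uniformly random active vertex and, using an independent coin each time, give it a new active child with probability $p$, or freeze it with probability $1-p$. In this description $S_n(\XXb)$ is exactly the number of active vertices after $n$ steps, and $\tau(\XXb)$ is the first time there is none. Furthermore, a.s.\ on $\{\tau(\XXb)=\infty\}$ one has $S_n(\XXb)\to+\infty$ while the indices $i$ with $\XX_i=-1$ have asymptotic density $1-p>0$, so $\sum_{i\ge 1}\frac{1}{S_i(\XXb)}\1_{\{\XX_i=-1\}}=\infty$; hence, by Theorem~\ref{thm:cvlocale}, $\T_\infty(\XXb)$ is well defined and is the a.s.\ local limit of $(\T_n(\XXb))_{n\ge 0}$ (on $\{\tau(\XXb)<\infty\}$ we simply take $\T_\infty(\XXb)=\T_{\tau(\XXb)}(\XXb)$).

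\emph{Continuous-time embedding.} Equip every active vertex with an independent rate-one exponential clock; when a clock rings, the corresponding vertex flips an independent coin and, with probability $p$, gains a new active child carrying a fresh clock and restarts its own clock, or, with probability $1-p$, is frozen and its clock stops. Since the minimum of finitely many i.i.d.\ exponentials is attained by a uniform one, and by the lack of memory, the successive vertices whose clocks ring, together with the associated coins, have the same law as the vertices selected by the discrete dynamics together with the signs $\XX_i$; consequently $(\T_n(\XXb))_{n\ge 0}$ has the same law as the sequence of genealogical trees obtained after each ring, and its a.s.\ local limit $\T_\infty(\XXb)$ therefore has the law of the \emph{entire} genealogical tree of this continuous-time process. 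The point of the embedding is that the individuals now reproduce \emph{independently}: from its birth, each vertex rings at the epochs of an independent rate-one Poisson process and dies at the first ring whose coin is $-1$, so it has a number of children which is Geometric with $\P(k\text{ children})=(1-p)p^{k}$, and, conditionally on this number, the subtrees descending from the children are independent copies of the same random tree, independent of the number of children. This is exactly the statement that $\T_\infty(\XXb)$ is a Bienaym\'e tree with offspring distribution $\mu$, $\mu(k)=(1-p)p^{k}$; the case $p\le 1/2$ corresponds to the (sub)critical regime $\sum_{k\ge 0}k\,\mu(k)=p/(1-p)\le 1$, in which $\tau(\XXb)<\infty$ a.s.\ and $\T_\infty(\XXb)$ is a.s.\ finite, and the case $p>1/2$ to the supercritical regime, in which $\T_\infty(\XXb)$ is infinite with probability $\P(\tau(\XXb)=\infty)>0$.

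\emph{Main obstacle.} The one delicate step is to justify, in the embedding argument, that the a.s.\ local limit of $(\T_n(\XXb))_n$ is indeed the full genealogical tree of the continuous-time branching process, i.e.\ that a.s.\ the ball of radius $r$ around the root stabilises after finitely many rings, for every $r$. This in turn rests on: (i) a.s.\ only finitely many individuals lie within graph distance $r$ of the root (each individual has a.s.\ finitely many children, and one sees by induction that each generation is a.s.\ finite); (ii) each of them is born at a finite ring; and (iii) every individual of generation $<r$ is eventually frozen, since it rings repeatedly until its coin first shows $-1$, which happens a.s.\ because $1-p>0$ (equivalently, this is condition~(3) of Theorem~\ref{thm:cvlocale}). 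Beyond the largest ring index involved, the radius-$r$ ball no longer changes. We finally note that a direct proof of the branching property, avoiding the embedding, is possible but cumbersome, precisely because the numbers of active descendants of the distinct children of the root are a priori correlated through their sum; the continuous-time description is what removes this correlation.
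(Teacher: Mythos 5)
Your proof is correct, but it follows a genuinely different route from the paper's. The paper proceeds purely in discrete time: conditioning on $\{\XX_1=+1\}$, it decomposes $\T_n$ into the subtree $\T_n^1$ descending from the root's first child and the remainder $\T_n^2$, shows that the relabelled pair $(\tilde\T_n^1,\tilde\T_n^2)$, together with an auxiliary ``which side evolved'' index $I_n$, is a Markov chain of a specific alternating form, then invokes Lemma~\ref{lemme chaines de markov altern\'ees} to deduce that the two components converge to independent copies of $\T_\infty$, and finally concludes via the characterization Lemma~\ref{lem:BGW} of geometric Bienaym\'e trees. You instead exploit the observation that, because $\XXb$ is i.i.d., the choice of vertex and the sign can be decoupled, and then pass to the standard Athreya--Ney/Yule-type continuous-time embedding with independent exponential clocks. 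In that embedding the branching property holds by fiat, the offspring distribution is visibly $\mathcal{G}(1-p)$, and the identification of $\T_\infty$ with the full genealogical tree reduces to checking that balls of fixed radius stabilise after finitely many rings, which you do (this also handles the non-explosion issue: there are finitely many vertices in any such ball and each dies at a finite ring because $1-p>0$). Your argument is conceptually more transparent, since the independence of subtrees is manifest rather than extracted via the alternating-chains lemma, and it bypasses the need for Lemma~\ref{lem:BGW} entirely. What the paper's approach buys in exchange is that it stays entirely in the discrete framework set up by Algorithms~\ref{algo1}--\ref{algo2}, avoids importing machinery about continuous-time branching processes, and records a reusable discrete lemma. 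One small point worth making explicit in your write-up, as the paper does: $\T_\infty$ is an edge- and vertex-labelled tree, so one must pass through the map $\tau\mapsto\overline\tau$ (ordering children by edge label, forgetting labels) to produce the plane tree to which the notion of Bienaym\'e tree applies; in your embedding this ordering coincides with ordering children by birth time, so the step is immediate, but it should be stated.
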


Roughly speaking, this comes from the fact that the subtrees grafted on the initial vertex evolve in an i.i.d.~fashion.

\paragraph{Height of uniform attachment trees with freezing.} 
Since we consider large trees, let us consider for $n\geq 1$, a sequence 
$\Xb^n \in \{-1,1\}^\N$ such that $\tau(\Xb^n)>n$, and let $\T_{n}\coloneqq\T_{n}(\Xb^{n})$. Also, to simplify notation, let $S_k^n\coloneqq S_k(\Xb^n)$ for  $0 \leq  k\leq n$.

Our next main theorem is that the height of $\T_{n}$ is of order 
\[ 
\h^+_n=\sum_{i=1}^n \frac{1}{S^{n}_i} \mathbbm 1_{\{\X^{n}_i=1\}}.
\]
\begin{theorem}
\label{thm:height}
The following results hold.
\begin{enumerate}
\item[{(I)}] Let $U^{n}$ be an active vertex of $\T_{n}$ chosen uniformly at random, and denote by $\mathsf{H}(U^{n})$ its height. Then for all $p \geq 1$:
\[
\frac{\mathsf{H}(U^{n})}{\h^+_n} \cvLp[p] 1.
\]
\item[{(II)}] For all $\e \in (0,1)$:
\[ 
\proba{1-\e \leq \frac{\Height(\T_{n})}{\h^+_n} \leq {e+\e}} \quad \mathop{\longrightarrow}_{n \rightarrow \infty} \quad 1.
\]
\item[{(III)}] Assume that ${\ln(n)}/{\h^+_n} \rightarrow 0$ as $n \rightarrow \infty$. Then for all $p\geq1$:
\[
\frac{\Height(\T_{n})}{\h^+_n} \cvLp[p] 1.
\]
\end{enumerate}
\end{theorem}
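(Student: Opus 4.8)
\textit{Preliminaries and part $(1)$.} Write $R_n:=\#\{i\le n:\X^n_i=1\}$. The standing assumption $\tau(\Xb^n)>n$ gives $S^n_i\ge1$ for $i\le n$ and $S^n_n-1=2R_n-n\ge0$, hence $R_n\ge n/2$; and, since $(S^n_i)_{i\le n}$ starts from $1$ and changes by $\pm1$, the first visit of each level $v\in\{2,\dots,M_n\}$, where $M_n:=\max_{i\le n}S^n_i$, occurs at an attachment step with $S^n_i=v$, so $\h^+_n\ge\sum_{v=2}^{M_n}\tfrac1v>\ln M_n-1$, whence $S^n_n\le M_n<e^{1+\h^+_n}$; combining with $\h^+_n\ge R_n/M_n\ge\tfrac n2 e^{-1-\h^+_n}$ gives $n<2e\,\h^+_n e^{\h^+_n}$, and, since $n\to\infty$, also $\h^+_n\to\infty$ and $\ln n\le\h^+_n(1+o(1))$. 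For $(1)$, I would follow the ancestral line of $U^n$ backwards in time: conditionally on $\T_n$ and its set of active vertices, $U^n$ is a uniformly chosen active vertex, and when passing from time $i$ to $i-1$ the current representative stays a uniformly chosen active vertex of $\T_{i-1}$, with the sole exception that at an attachment step it equals, with probability $1/S^n_i$, the vertex born at step $i$, in which case one moves to its parent and the recorded height drops by one (the recorded height being unchanged in all other cases). Since the corresponding coin flips are independent, this yields
\[
\H(U^n)\ \overset{(d)}{=}\ \sum_{i\,:\,\X^n_i=1}B_i,\qquad (B_i)\ \text{independent},\quad B_i\sim\mathrm{Bernoulli}(1/S^n_i),
\]
so that $\E[\H(U^n)]=\h^+_n$, $\mathrm{Var}(\H(U^n))\le\h^+_n$, and $\E\big|\H(U^n)-\h^+_n\big|^p\le C_p\big((\h^+_n)^{p/2}+\h^+_n\big)$ by a standard moment bound for sums of independent $[0,1]$-valued variables (Jensen for $p\in[1,2]$, Rosenthal or Marcinkiewicz--Zygmund for $p>2$); dividing by $(\h^+_n)^p$ and using $\h^+_n\to\infty$ proves $(1)$.

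\textit{A first–moment estimate on heights.} For the upper bounds in $(2)$ and $(3)$ I would count vertices by height. Let $\mathcal N_k(i)$, resp.\ $M_k(i)$, be the number of active, resp.\ of all, vertices of $\T_i$ at height $k$. A one-step analysis gives $\E[\mathcal N_k(i+1)\mid\mathcal F_i]=\mathcal N_k(i)+\mathcal N_{k-1}(i)/S^n_i$ at an attachment step, $\E[\mathcal N_k(i+1)\mid\mathcal F_i]=\mathcal N_k(i)(1-1/S^n_i)$ at a freezing step, while $M_k$ changes only at attachment steps, increasing there by $\1_{\{\text{the new vertex has height }k\}}$. Since $S^n_i$ is deterministic, dividing $\E[\mathcal N_k(i)]$ by $S^n_i$ and restricting attention to the attachment times $t_1<t_2<\cdots$ (at which the active population goes from $S^n_{t_m}-1$ to $S^n_{t_m}$) turns the recursion into a product: with $P_m(z):=\prod_{\ell=1}^m\big(1+\tfrac{z-1}{S^n_{t_\ell}}\big)$,
\[
\E[\mathcal N_k(n)]=S^n_n\,[z^k]P_{R_n}(z),\qquad\E[M_k(n)]=\1_{\{k=0\}}+[z^{k-1}]\sum_{m=0}^{R_n-1}P_m(z).
\]
Each $P_m$ has non-negative coefficients and, for $z\ge1$, $P_m(z)\le P_{R_n}(z)\le\exp((z-1)\h^+_n)$ since $\sum_{\ell=1}^{R_n}1/S^n_{t_\ell}=\h^+_n$. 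Hence for all $z>1$ and $k\ge1$,
\[
\P\big(\Height(\T_n)\ge k\big)\le\E[M_k(n)]\le n\exp\big((z-1)\h^+_n-(k-1)\ln z\big),
\]
and optimizing over $z$ (take $z=(k-1)/\h^+_n$ when this exceeds $1$) gives $\E[M_k(n)]\le n\exp\!\big(-\h^+_n\,g((k-1)/\h^+_n)\big)$ with $g(c):=c\ln c-c+1>0$ for $c\ne1$ and $g(e)=1$.

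\textit{Conclusion of $(2)$ and $(3)$.} In both parts the lower bound is immediate from $\Height(\T_n)\ge\H(U^n)$ and $(1)$, which give $\P(\Height(\T_n)\ge(1-\e)\h^+_n)\to1$. For the upper bound in $(2)$, take $k\approx(e+\e)\h^+_n$, so that $(k-1)/\h^+_n\ge e+\tfrac\e2$ for $n$ large; since $g(e+\tfrac\e2)>g(e)=1$ and $n<2e\,\h^+_n e^{\h^+_n}$, we get $\E[M_k(n)]<2e\,\h^+_n\,e^{\h^+_n(1-g(e+\e/2))}\to0$. For $(3)$, the hypothesis gives $n=e^{o(\h^+_n)}$, so for every $\e>0$ and $k\approx(1+\e)\h^+_n$, $\E[M_k(n)]\le e^{-\h^+_n(g(1+\e/2)-o(1))}\to0$; together with the lower bound, $\Height(\T_n)/\h^+_n\to1$ in probability. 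To upgrade this to $\Lp^p$-convergence it suffices to check $\sup_n\E[(\Height(\T_n)/\h^+_n)^{p+1}]<\infty$: writing $\E[\Height(\T_n)^{p+1}]\le(p+1)\sum_{k\ge1}k^p\,\P(\Height(\T_n)\ge k)$, the terms with $k\le\lceil2\h^+_n\rceil$ contribute $O((\h^+_n)^{p+1})$, while for larger $k$ the displayed bound together with $g(c)\ge g(2)+(c-2)\ln2$ for $c\ge2$ makes the remainder $(\h^+_n)^{O(1)}e^{-g(2)\h^+_n(1-o(1))}\to0$; uniform integrability then yields $\Lp^p$-convergence.

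\textit{Main difficulty.} The heart of the argument is the telescoping identity for $\E[\mathcal N_k(n)]$: the recursion must be organised so that, after dividing by $S^n_i$ and passing to the attachment times, the generating function is $\prod_\ell\big(1+(z-1)/S^n_{t_\ell}\big)$ with \emph{exactly} the quantities $S^n_{t_\ell}$ entering the definition of $\h^+_n$. A careless bookkeeping would produce $S^n_{t_\ell}-1$ instead, hence a bound of the shape $(\text{const})\cdot\sum_{i:\X^n_i=1}1/(S^n_i-1)$, which can be as large as $2\h^+_n$ and would miss the sharp constants. Everything else is a routine exponential-Chebyshev / first-moment computation, in which the elementary estimates $S^n_n\le e^{1+\h^+_n}$ and $n\le 2e\,\h^+_n e^{\h^+_n}$ control the prefactor and produce the constant $e$ in $(2)$ (and the constant $1$ under the extra hypothesis of $(3)$).
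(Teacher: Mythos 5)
Your proof is correct, and for parts (2)--(3) it takes a genuinely different route from the paper's. The paper derives the height representation $\H^n_0(u)\overset{(d)}{=}\sum_{i\le\birth_n(u)}Y_i\1_{\{\X_i=1\}}$ via the time-reversed growth--coalescent construction (Algorithm~\ref{algo2}, Lemma~\ref{lem:accroissementH}, \eqref{eq:H0}), then establishes Proposition~\ref{prop:bounds} by a union bound over all $n$ vertices combined with Bennett's inequality; the factor $n$ from the union bound is what produces the constant $e$ in (2). You instead avoid Algorithm~\ref{algo2} entirely: you track the level-counting process $\mathcal N_k(i)$ forward in time, observe that the normalised quantity $\E[\mathcal N_k(i)]/S^n_i$ is invariant across freezing steps and multiplied by $\bigl(1+\tfrac{z-1}{S^n_{t_\ell}}\bigr)$ at each attachment time $t_\ell$, and thereby obtain the generating-function identity whose exponential tilt $z\mapsto\exp((z-1)\h^+_n)$ gives the same Chernoff bound $\E[M_k(n)]\le n\exp(-\h^+_n g((k-1)/\h^+_n))$ (your $g$ is the paper's $g$ shifted by one). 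This is essentially the Devroye/Pittel-style first-moment argument, transported cleanly to the freezing model; your careful bookkeeping producing $1/S^n_{t_\ell}$ rather than $1/(S^n_{t_\ell}-1)$ is indeed the crux, as you note. For part (1) you also use a different route: a backward ancestral-line argument through Algorithm~\ref{algo1} rather than Algorithm~\ref{algo2}, followed by Rosenthal/Marcinkiewicz--Zygmund instead of Bennett. The conclusion $\H(U^n)\overset{(d)}{=}\sum_i B_i$ with independent Bernoulli$(1/S^n_i)$ increments is correct, but the asserted independence of the $B_i$'s is precisely the nontrivial content that the paper extracts from the equivalence between Algorithms~\ref{algo1} and~\ref{algo2} (Theorem~\ref{thm:samelaw}); a clean way to fill this in is an induction showing that, given $\T_0,\dots,\T_{i-1}$ and the future coin flips, the current representative is uniform on the active vertices of $\T_{i-1}$ and $B_i$ is an independent $\mathrm{Bernoulli}(1/S^n_i)$. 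Finally, your lower bound on $\h^+_n$ (first passage of each level is an attachment step, hence $\h^+_n>\ln M_n-1$ and $n<2e\,\h^+_n e^{\h^+_n}$) is a different and slightly weaker substitute for the paper's Lemma~\ref{lemme minoration h_n^+} (rearrangement to get $\h^+_n\ge\ln(n/4)$), but it suffices for all three parts.
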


The proof of Theorem \ref{thm:height} is based on an alternative construction of $\T_{n}$, based on time-reversal, through a growth-coalescent process of rooted forests. This construction can be roughly described as follows (see Algorithm \ref{algo2} for a precise definition): start with a forest made of $S_{n}^{n}$ rooted one-vertex trees. Read successively $\X^{n}_{n},\X^{n}_{n-1}, \ldots, \X^{n}_{1}$; when reading $+1$ add a new one-vertex tree to the forest; when reading $-1$, choose successively two different forests uniformly at random, connect their roots by an edge, and root this new tree at the root of the first forest. It turns out that, in the end, one gets a tree having the same distribution as $\T^{n}$ (see Theorem \ref{thm:samelaw} for a precise statement). This is a generalization of the connection between random recursive trees and Kingman's coalescent which first appeared in \cite{DR76} (see \cite[Sec.~6]{Dev87}, \cite[Sec.~3]{Pit94}, \cite[Sec.~2.2]{AB15}, \cite{ABE18} for applications), in the context of union-find data structures (which are data structures that store a collection of disjoint sets where merging sets and finding a representative member of a set), see e.g.~\cite{KS78}. The main difference is that a growth feature must be added when freezing is introduced.

We are next interested in the specific regime where roughly speaking the number of active vertices increases linearly compared to the size of the tree.
\begin{theorem}
\label{thm:lineaire}
Let $c\in (0,1]$. Let $(A_n)_{n\in \N}$ be a sequence of positive numbers such that {$A_{n} \rightarrow\infty$} and $A_n=o(\log n)$ as $n\to  \infty$. Assume that
\begin{equation}
\label{eq:assumptions}
\lim_{\e \to 0} \limsup_{n\to \infty} \max_{A_n \le i \le \e n} \left| \frac{S^n_i}{i}-c \right| = 0
\qquad \text{and} \qquad \forall \e { \in (0,1)}, \enskip
\liminf_{n\to \infty} \min_{\e n \le i \le n } \frac{S^n_i}{n} >0.
\end{equation}
The following assertions hold.
\begin{enumerate}
\item[{(I)}] We have~~~~$\displaystyle \frac{\h^+_n}{\ln n} \xrightarrow[n\to\infty]{} \frac{c+1}{2c}$.
\item[{(II)}] 
For all $n\ge 1$, conditionally given  $\T_{n}$, let $V_{1}^n$   and $V_{2}^{n}$ be independent uniform vertices of $\T_{n}$. Then for every $p \geq 1$
\begin{equation}
\label{eq:fvprobahn}
 \frac{\H(V_{1}^n)}{\ln n} \cvLp[p] \frac{c+1}{2c} \qquad \textrm{and} \qquad  \frac{d^n(V^n_1,V^n_2)}{\ln n} \cvLp[p] \frac{c+1}{c},
\end{equation}
where $\H(V_{1}^n)$ is the height of $V_{1}^{n}$ in $\T_{n}$ and $d^{n}$ denotes the graph distance in $\T_{n}$.
\item[{(III)}]We have
\[\frac{\Height(\T_{n})}{\ln(n)} \cvproba \frac{{c+1}
}{2c}f(c),\]
where $f(c)$ is the unique solution with $f(c)>1$ to $f(c)(\ln f(c)-1)=(c-1)/(c+1)$. 
\end{enumerate}
\end{theorem}

Let us make some comments on these results. Roughly speaking, $\mathcal{T}_{n}$ looks like a ``tentacular bush'' in the sense that two typical vertices are always at the same distance of order $2\h_{n}^{+} \sim  \tfrac{c+1}{c} \cdot \ln (n)$, but the total height is of order $f(c) \cdot \h_{n}^{+}$. A typical example where the assumptions \eqref{eq:assumptions} are satisfied is e.g.~when $ \max_{A_n\le i\le n} |{S^n_i}/{i} -c| \rightarrow 0$. This is for instance the case when $p \in (1/2,1)$, $\XXb =(\XX_i)_{i\geq 1}$ is a  sequence of i.i.d random variables such that ${\P(X_1=+1)=p}$ and $\P(\XX_1=-1)=1-p$, setting $S_{i} ^{n}=S_i(\XXb)$, {conditionally given $\tau(\XXb)>n$}, Theorem \ref{thm:lineaire} applies almost surely with $c=2p-1$ thanks to the laws of large numbers. The reason we rather use \eqref{eq:assumptions} is for applications to contact-tracing in the SIR model (see Sec.~\ref{sec:SIR}).

\begin{figure}[!ht]
\label{fig:plot}
\centering
\includegraphics[scale=0.5]{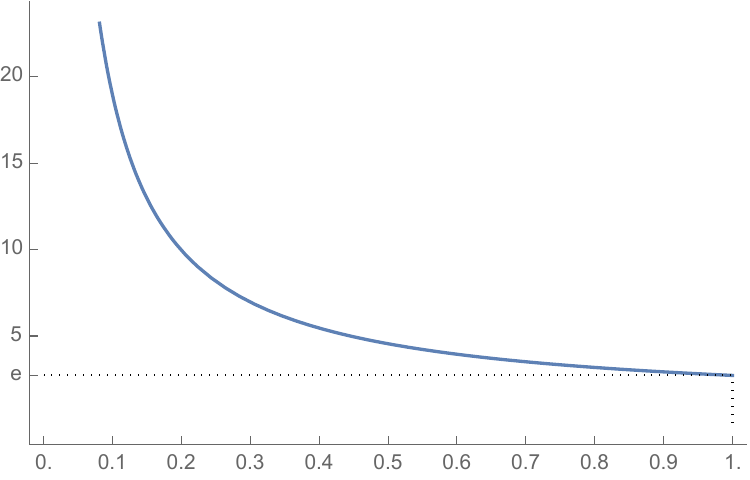}%
\caption{Plot of the function $ (\frac{{c+1}
}{2c}f(c): 0 <c \leq 1)$.
}
\end{figure}

Theorem \ref{thm:lineaire} {(III)} gives a more precise result than Theorem \ref{thm:height} {(II)} under an additional assumption. It generalizes the well known result that when $\T_{n}$  is a random recursive tree (which is obtained by taking $\X_{n}=1$  for every $n \geq 1$), we  have  \cite[Theorem 10]{Dev87}: $\Height(\T_{n})/\ln(n) \rightarrow e$ in probability (see also \cite[Theorem 1]{Pit94} for a {different approach based on continuous-time branching processes}). 

The proof of Theorem \ref{thm:lineaire} {(III)} is rather delicate: the main difficulty is that the presence of freezing impedes the direct use of branching process techniques. The alternative growth-coalescent process construction previously mentioned lies at the heart of our proof, {which is different from how corresponding results would be proved for the random recursive tree.}

Let us mention that it is possible to check that $(\T_{n}/\ln(n))_{n \geq 1}$ is not tight for the so-called Gromov--Hausdorff--Prokhorov topology. However, for the topology considered in \cite{ET21,Jan21},  $\T_{n}/\ln(n)$ converges to the so-called long dendron $\Upsilon_{\nu}$ (we use the notation of \cite[Example 3.12]{Jan21}) with $\nu$ being the Dirac mass $\nu=\delta_{(c+1)/2c}$.

{\paragraph{Application to contact-tracing in a stochastic SIR dynamics.} Our results may be applied to analyze the geometry of the so-called ``infection tree'' of a stochastic SIR dynamics, in which the vertices are individuals and where edges connect two individuals if one has infected the other. To keep this introduction at reasonable length, we refer to Section~\ref{sec:SIR} for details and to Theorem \ref{thm:SIR} for a precise statement.}

\paragraph{Plan of the paper.} The rest of the paper is organized as follows: In Section \ref{sec:discret}, we present our alternative construction, and describe a few of its properties. In Section \ref{sec:local}, we show Theorem \ref{thm:cvlocale} and \ref{thm:BGW} about local limits. In Sections \ref{sec:distances} and  \ref{sec:Linear}, we prove respectively Theorems \ref{thm:height} and \ref{thm:lineaire} about the height of our trees. We then apply our results in Section \ref{sec:SIR} to a stochastic SIR dynamics. Finally, in Section \ref{oo/oo} we give a few open problems.

\paragraph{Acknowledgments.} We thank Christina Goldschmidt for stimulating discussions at early stages of this work. We also thank the wonderful organization committee (Serte Donderwinkel, Christina Goldschmidt, Remco van der Hofstad, and Joost Jorritsma) of the RandNET Summer School and Workshop on Random Graphs, where this work was initiated. {We are also grateful to an anonymous referee, whose extremely thorough reading and numerous comments greatly improved the article.}

\section{Trees constructed by uniform attachment with freezing}
\label{sec:discret}

We start by defining our model. We also provide for future use a table of notation below (Table \ref{tab:secdiscret}).

\begin{table}[htbp]\caption{Table of the main notation and symbols introduced in Section \ref{sec:discret} and used later.}
\centering
\begin{tabular}{c c p{12cm} }
\toprule
$\N$ && $=\{1,2,3,\dots\}$ positive integers\\
$\llbracket i,j \rrbracket$ && $=\{i,i+1,\dots,j-1,j\}$ all integers between $i$ and $j$\\
$\#A$ && cardinality of a finite set $A$\\
\hline
$\Xb = (\X_n)_{n\in\N}$ && a sequence of elements of $\{-1,1\}$\\
$S_n(\Xb)$ && $=1+\sum_{i=1}^n \Xb_i$\\
$\tau(\Xb)$ && $=\inf \{n \geq 1 : S_{n}(\Xb)=0\}$ \\
\hline
$\T_n(\Xb)$ && tree built at time $n$ by Algorithm \ref{algo1}; $S_{n}(\Xb)$ is its number of active vertices \\
$N_n(\Xb)$ && total number of vertices in $\T_n(\Xb)$; $N_n(\Xb)=(S_n(\Xb)+n+1)/2$ when $n \leq \tau(\Xb)$\\
\hline
$\F_n^n(\Xb),\F_{n-1}^n(\Xb), \ldots, \F_0^n(\Xb)$ && the forest of trees built by Algorithm \ref{algo2}\\

$\T^n(\Xb)$ && $=\F_0^n(\Xb)$ the -{output of} Algorithm \ref{algo2}\\
\hline
$\Fbb_{n}(\Xb)$ && $=\{i\in\llbracket 1,n \rrbracket: \X_{i}=-1\}$ the {set of} labels of frozen vertices of $\T^n(\Xb)$\\

$\A_{n}(\Xb)$ && $=\{a_{1},\ldots,a_{S_{n}(\Xb)}\}$ the {set of} labels of active vertices of $\T^n(\Xb)$\\

$\V_{n}(\Xb)$ && $=\Fbb_{n}(\Xb)\cup\A_{n}(\Xb)$ the {set of} labels of all vertices of $\T^n(\Xb)$\\
\hline
$\birth_n(u)$ && the birth time of $u \in \V_{n}(\Xb)$ in the construction of $\T^n(\Xb)$  by Algorithm \ref{algo2} \\

$\coal_{n}(u,v)$ && the coalescence time between $u,v \in \V_{n}(\Xb)$ in the construction of $\T^n(\Xb)$  by Algorithm \ref{algo2} \\
\hline
 $\H^n_i(u)$  && the height of vertex $u$ in $\F^n_i(\Xb)$\\
\bottomrule
\end{tabular}
\label{tab:secdiscret}
\end{table}

\subsection{Uniform attachment with freezing: recursive construction}
\label{ssec:def}
Let $\Xb=(\X_i)_{i \geq 1}\in \{-1,+1\}^\N$. 
In what follows, we formally construct the random trees $(\T_n(\Xb))_{n\geq 0}$. These trees will be rooted, edge-labelled, and vertex-labelled. The label of an edge is the time it appears.
The label of a vertex {in $\T_{n}(\Xb)$} is either the time it {froze}, or the label ``$a$'' if it is still active at time $n$.

\paragraph{\labeltext[1]{Algorithm 1.}{algo1}}
\begin{compitem}
\item Start with the tree $\T_0(\Xb)$ made of a single root vertex labelled $a$.
\item For every $n \geq 1$, if $\T_{n-1}(\Xb)$ has no vertices labelled $a$, then set $\T_n(\Xb) = \T_{n-1}(\Xb)$. Otherwise let $V_n$ be a random uniform active vertex of $\T_{n-1}(\Xb)$, chosen independently from the previous ones. Then:
\begin{compitem}
\item[--] if $\X_n=-1$, build $\T_n(\Xb)$ from $\T_{n-1}(\Xb)$ simply by replacing the label $a$ of $V_n$ with the label $n$;
\item[--] if $\X_n=1$, build $\T_n(\Xb)$ from $\T_{n-1}(\Xb)$ by adding an edge labelled $n$ between $V_n$ and a new vertex labelled $a$. 
\end{compitem}
\end{compitem}
For $n \geq 0$, we view $\T_n(\Xb)$ as a rooted, double-labelled  {tree}  (that is, edge-labelled and vertex-labelled). If $N_{n}(\Xb)$ represents the total number of vertices  of $\T_n(\Xb)$, observe that by construction $N_n(\Xb)=(S_n(\Xb)+n+1)/2$ for $0 \leq n \leq \tau(\Xb)$.

\begin{figure}
\begin{center}
\begin{tikzpicture}[scale = 0.5]
\draw[->] (0,0) -- (0,3.5);
\draw[->] (0,0) -- (5.5,0);

\node[above,font=\footnotesize] at (0,3.5) {$S_n(\Xb)$};
\node[right,font=\footnotesize] at (5.5,0) {$n$};

\node[draw,circle,fill,inner sep = 1pt] at (0,1) {};
\node[draw,circle,fill,inner sep = 1pt] at (1,2) {};
\node[draw,circle,fill,inner sep = 1pt] at (2,1) {};
\node[draw,circle,fill,inner sep = 1pt] at (3,2) {};
\node[draw,circle,fill,inner sep = 1pt] at (4,3) {};
\node[draw,circle,fill,inner sep = 1pt] at (5,2) {};

\draw (0,1)--(1,2)--(2,1)--(3,2)--(4,3)--(5,2);

\node[left,font=\footnotesize] at (0,0) {0};
\node[left,font=\footnotesize] at (0,1) {1};
\node[left,font=\footnotesize] at (0,2) {2};
\node[left,font=\footnotesize] at (0,3) {3};

\node[below,font=\footnotesize] at (0,0) {0};
\node[below,font=\footnotesize] at (1,0) {1};
\node[below,font=\footnotesize] at (2,0) {2};
\node[below,font=\footnotesize] at (3,0) {3};
\node[below,font=\footnotesize] at (4,0) {4};
\node[below,font=\footnotesize] at (5,0) {5};

\draw[dotted] (0,2)--(5,2)--(5,0);
\end{tikzpicture}
\hspace{2em}
\begin{tikzpicture}[scale = 0.8,
sommet/.style = {draw,circle, font=\scriptsize,inner sep=0,minimum size=13pt},
gele/.style = {fill=cyan},
etiquete/.style = {font = \scriptsize}]

\node[sommet] (0) at (-0.5,0.4) {$a$};
\node[etiquete] (t0) at (-0.5,-0.2) {$\T_0(\Xb)$};

\node[sommet] (1) at (0.5,1) {$a$};
\node[sommet] (2) at (0.6,2) {$a$};
\draw (1)--node[left,etiquete] {1} (2);
\node[etiquete] (t1) at (0.5,0.4) {$\T_1(\Xb)$};

\node[sommet,gele] (3) at (1.6,0) {2};
\node[sommet] (4) at (1.7,1) {$a$};
\draw (3) --node[left,etiquete] {1} (4);
\node[etiquete] (t2) at (1.6,-0.6) {$\T_2(\Xb)$};

\node[sommet,gele] (5) at (3,0.5) {2};
\node[sommet] (6) at (3.1,1.5) {$a$};
\node[sommet] (7) at (2.7,2.5) {$a$};
\draw (5) --node[left,etiquete] {1} (6);
\draw (6) --node[left,etiquete] {3} (7);
\node[etiquete] (t3) at (3,-0.1) {$\T_3(\Xb)$};

\node[sommet,gele] (8) at (4.5,0) {2};
\node[sommet] (9) at (4.6,1) {$a$};
\node[sommet] (10) at (4.2,2) {$a$};
\node[sommet] (11) at (5,2) {$a$};
\draw (8) --node[left,etiquete] {1} (9);
\draw (9) --node[left,etiquete] {3} (10);
\draw (9) --node[right,etiquete] {4} (11);
\node[etiquete] (t4) at (4.5,-0.6) {$\T_4(\Xb)$};

\node[sommet,gele] (12) at (6.3,0.4) {2};
\node[sommet] (13) at (6.4,1.4) {$a$};
\node[sommet,gele] (14) at (6,2.4) {$5$};
\node[sommet] (15) at (6.8,2.4) {$a$};
\draw (12) --node[left,etiquete] {1} (13);
\draw (13) --node[left,etiquete] {3} (14);
\draw (13) --node[right,etiquete] {4} (15);
\node[etiquete] (t5) at (6.3,-0.2) {$\T_5(\Xb)$};

\end{tikzpicture}
\caption{On the left is represented the walk $(S_n(\Xb))_{n \geq 0}$ up to time $n=5$ associated with the sequence $\Xb=+1,-1,+1,+1,-1,\dots$. On the right, a possible realisation of the trees $\T_0(\Xb)$ to $\T_5(\Xb)$ given this sequence. Frozen vertices have been colored in blue.}
\label{fig exemple arbre recursif avec gel}
\end{center}
\end{figure}
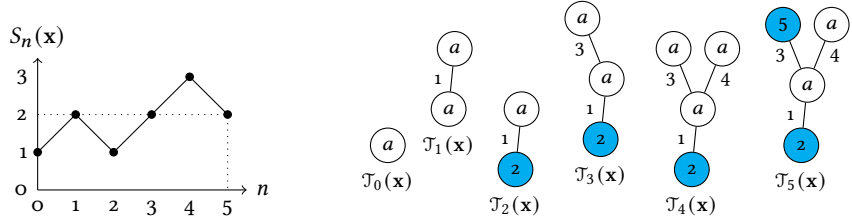

\subsection{Uniform attachment with freezing: growth-coalescent construction}
\label{ssec:growthcoalescent}

As before, let $\Xb=(\X_n)_{n \geq 1}\in \{-1,1\}^\N$. We introduce in this section an alternative time-reversed construction of uniform attachment trees with freezing. It may be seen as a growth coalescence process of forests, and as previously mentioned, is a generalization of the connection between random recursive trees and Kingman's coalescent introduced in \cite{DR76}. Most of our proofs are based on this construction.

\paragraph{\labeltext[2]{Algorithm 2.}{algo2}}
Fix $0 \leq n \leq \tau(\Xb)$. We construct a sequence  $(\F_{n}^n(\Xb),\F_{n-1}^n(\Xb), \ldots, \F_{0}^n(\Xb))$ of {(possibly empty)} forests of rooted, edge-labelled, vertex-labelled {\sout{unoriented}} trees by induction as follows.
\begin{compitem}
\item Let $\F_{n}^n(\Xb)$ be a forest made of $S_{n}(\Xb)$ one-vertex trees labelled $a_{1}, \ldots,a_{S_{n}(\Xb)}$.
\item For every $1\leq i \leq n$, if $\F^{n}_{i}(\Xb)$ has been constructed, define $\F^{n}_{i-1}(\Xb)$ as follows:
\begin{compitem}
\item[--] if $\X_{i}=-1$, $\F^n_{i-1}(\Xb)$ is obtained by adding to $\F^n_{i}(\Xb)$ a new one-vertex tree labelled $i$;
\item[--] if $\X_{i}=1$, let $(T_{1},T_{2})$ be a  pair of different random trees in $\F_{i}^n(\Xb)$ chosen uniformly at random, independently of the previous choices; then $\F^n_{i-1}(\Xb)$ is obtained from $\F^n_{i}(\Xb)$ by adding an edge labelled $i$ between the roots $r(T_{1})$ and $r(T_{2})$ of respectively $T_{1}$ and $T_{2}$, and rooting the tree thus obtained at $r(T_{1})$;
\end{compitem}
\item Let $\T^n(\Xb)$ be the only tree of $\F^n_0(\Xb)$.
\end{compitem}

 \begin{figure}[!ht] \centering
\includegraphics[width=0.95\linewidth]{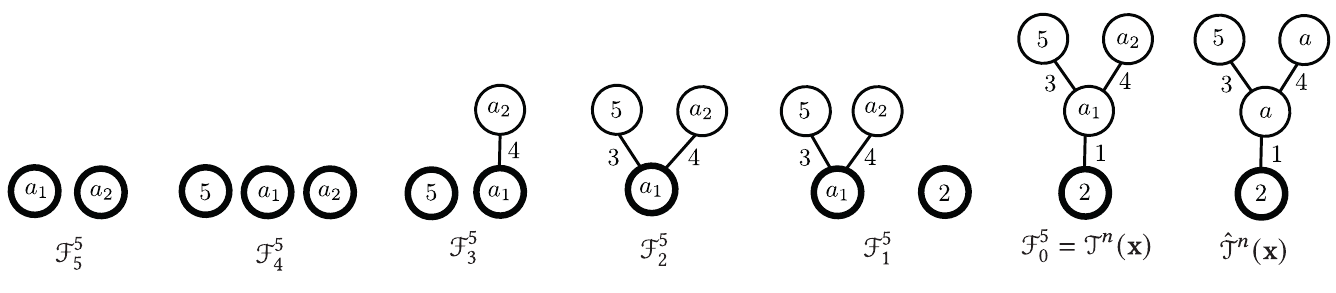}
\caption{An illustration of Algorithm \ref{algo2} with $n=5$ and $(\X_{5},\X_{4},\X_{3},\X_{2},\X_{1})=(-1,1,1,-1,1)$ (this is the same sequence as in Fig.~\ref{fig exemple arbre recursif avec gel}). For example, since $\X_{2}=-1$, $\mathcal{F}^{5}_{1}$ is obtained from $\mathcal{F}^{5}_{2}$ by adding a new tree made of a vertex labeled $2$. Since $\X_{4}=1$, to build  $\mathcal{F}^{5}_{3}$ from  $\mathcal{F}^{5}_{4}$ we have chosen in $\mathcal{F}^{5}_{3}$  two trees  $(T_{1},T_{2})$ with $T_{1}$ being the vertex $a_{1}$ and $T_{2}$ being the vertex $a_{2}$, and  we have added an edge labelled $4$ between the roots $r(T_{1})=a_{1}$ and $r(T_{2})=a_{2}$ of {\sout{respectively}} $T_{1}$ and $T_{2}$, and  {rooted} the tree thus obtained at $r(T_{1})=a_{1}$.
}
\label{fig:growthcoalescent}
\end{figure}

\begin{figure}[h!]
 \centering
    \makeatletter\edef\animcnt{\the\@anim@num}\makeatother
\animategraphics[label=myAnim2,scale=0.9,poster=last]{5}{film2/image}{1}{26} 
   \mediabutton[jsaction={anim.myAnim2.playFwd();}]{\scalebox{1.5}[1.2]{\strut $\vartriangleright$}}
   \mediabutton[jsaction={anim.myAnim2.pause();}]{\scalebox{1.5}[1.2]{\strut $\shortparallel$}}
    \caption{\label{simu1}Simulation of  Algorithm \ref{algo2}: the  animation (played with Acrobat Reader) shows the resulting process as the number of steps increases. When a new vertex appears, its color is chosen at random. When two trees merge, the resulting tree keeps the color of the largest of the two trees.}
\end{figure}

We shall soon see in Sec.~\ref{ssec:constructions} the equivalence between Algorithm \ref{algo1} and Algorithm \ref{algo2}, but before that, we introduce some important notation  and consequences which will be useful in the analysis of the height of trees built by uniform attachment with freezing.

\paragraph{Important convention} Throughout this article, when the context is clear, we will sometimes drop the parameter $\Xb$ to lighten notation: {in particular, we shall write} $\tau$, $S_n$, $\T_n$, $\F_i^n$ instead of $\tau(\Xb)$, $S_n(\Xb)$, $\T_n(\Xb)$, $\F_i^n(\Xb)$, etc.

\subsection{Laws of the birth and coalescence times}
\label{ssec:estimates}

Let $\Xb=(\X_n)_{n \geq 1}\in \{-1,1\}^\N$. First, define
\begin{equation}
\label{eq:labels}
\Fbb_{n}(\Xb) =  \{i \in \llbracket 1,n \rrbracket: \X_{i}=-1\},  \qquad \A_{n}(\Xb)=  \{a_{1}, \ldots, a_{S_{n}(\Xb)}\}, \qquad   \V_{n}(\Xb)= \Fbb_{n}(\Xb) \cup \A_{n}(\Xb).
\end{equation}
It is crucial to observe that while $\T^n$ is a random tree, the labels of its vertices are deterministic quantities that only depend on $\Xb$: $\A_{n}$ are the labels of the active vertices of $\T^n$ and $\Fbb_{n}$ are the labels of the frozen vertices of $\T^{n}$. In particular, the elements of $\V_{n}$ will be called vertices of $\T^n$.

Next, for every $u\in\V_{n}$, let $\birth_n(u)$ be the largest $i \in \{0,1, \ldots,n\}$ such that $u$ belongs to the forest $\F^n_i$. Explicitly, if $u \in \A_{n}$ is an active vertex then $\birth_n({u})=n$, and if $u \in \Fbb_{n}$ (note that $u$ is then an integer) then $\birth_n(u)=u-1$ (see Fig.~\ref{fig:growthcoalescent} for an example). We say that $\birth_n(u)$ is the \emph{birth time} of $u$, since it encodes the first time when vertex $u$ appears in Algorithm \ref{algo2}. For  $0 \leq i \leq \birth_n(u)$, let $\H^n_i(u)$ be the height of vertex $u$ in $\F^n_i$ (that is, the graph distance between $u$ and the root of {whichever tree of $\F^{n}_{i}$ contains $u$}).

Finally,  for $u,v \in \V_{n}$, let $\coal_{n}(u,v)$ be the largest $i \in \{0,1, \ldots,n\}$ such that $u$ and $v$  belong to the same tree in the forest $\mathcal{F}^{n}_{i}$ obtained when building $\T^{n}$ in Algorithm \ref{algo2}.  We say that $\coal_{n}(u,v)$ is the \emph{coalescence time} between $u$ and $v$, since it encodes the first time $u$ and $v$ belong to the same tree  in Algorithm \ref{algo2} (observe that while $\birth_{n}(u)$ is deterministic, $\coal_{n}(u,v)$ is random).

We now state several simple consequences {of Algorithm \ref{algo2}}, which will be useful to study  the geometry of $\T^{n}$.

\begin{lemma}
\label{lem:accroissementH}
Fix $1 \leq n \leq \tau$  and  $ {v} \in \V_{n}$. For  every $1 \leq i \leq \birth_{n}(v)$:
\[
\proba{\H^n_{i-1}(v)-\H^n_i(v)=1 | \F^n_n, \ldots, \F^n_i} = \frac{1}{S_{i}}\mathbbm{1}_{\{\X_i=1\}}.
\]
\end{lemma}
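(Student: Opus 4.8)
The statement is about the height of a fixed vertex $v$ decreasing by exactly $1$ when we pass from $\F^n_i$ to $\F^n_{i-1}$ in Algorithm~\ref{algo2}, conditionally on the history $\F^n_n,\dots,\F^n_i$. The plan is to split according to the value of $\X_i$ and trace through the (de)construction step defining $\F^n_{i-1}$ from $\F^n_i$.

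First I would dispose of the case $\X_i=-1$. Here $\F^n_{i-1}$ is obtained from $\F^n_i$ merely by adjoining an isolated new one-vertex tree (labelled $i$); no edge is added and no existing tree is modified. Since $i \leq \birth_n(v)$ and $v\neq i$ (as $v$ is already present in $\F^n_i$), the vertex $v$ sits in the same tree in $\F^n_{i-1}$ as in $\F^n_i$, with the same root and the same ancestral path. Hence $\H^n_{i-1}(v)=\H^n_i(v)$ deterministically, so $\proba{\H^n_{i-1}(v)-\H^n_i(v)=1\mid \cdots}=0=\frac1{S_i}\mathbbm 1_{\{\X_i=1\}}$.

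Next, the case $\X_i=1$. Now $\F^n_{i-1}$ is built from $\F^n_i$ by picking a uniform ordered pair $(T_1,T_2)$ of distinct trees of $\F^n_i$, joining $r(T_1)$ to $r(T_2)$ by an edge, and rooting the merged tree at $r(T_1)$. The key observation is that the height of a vertex changes only for vertices lying in $T_2$: every vertex of $T_1$, and every vertex of a tree not involved in the merge, keeps its root and its distance to it, whereas a vertex $w\in T_2$ now has root $r(T_1)$ and new height $\H(w)+1$, the $+1$ coming from the new edge $r(T_1)r(T_2)$. Thus $\H^n_{i-1}(v)-\H^n_i(v)=1$ if and only if $v\in T_2$, i.e.\ iff the tree of $\F^n_i$ containing $v$ is selected as the \emph{second} coordinate of the uniform ordered pair. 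Conditionally on $\F^n_n,\dots,\F^n_i$, the forest $\F^n_i$ is determined and contains $S_i=S_i(\Xb)$ active vertices; I would check (e.g.\ by an immediate induction on the Algorithm~\ref{algo2} dynamics, or by invoking that $\F^n_i$ has exactly $S_i$ trees, which is the standard bookkeeping recorded in the excerpt) that $\F^n_i$ consists of exactly $S_i$ trees. Choosing an ordered pair of distinct trees uniformly among the $S_i(S_i-1)$ such pairs, the probability that a prescribed tree (the one containing $v$) occupies the second slot is $(S_i-1)/(S_i(S_i-1))=1/S_i$. This gives the claimed value, and — crucially — since the pair $(T_1,T_2)$ at step $i$ is chosen independently of all previous choices, the conditional probability given $\F^n_n,\dots,\F^n_i$ equals this unconditional $1/S_i$.

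I do not expect a genuine obstacle here; the only mild subtlety is the measurability/conditioning point — making sure that "$v\in T_2$" is, given the history up to $\F^n_i$, an event whose probability is governed solely by the fresh uniform choice at step $i$, together with the fact that the number of trees in $\F^n_i$ is deterministically $S_i$. Both are immediate from the description of Algorithm~\ref{algo2}: the number of trees increases by $1$ at each $\X=-1$ step read (in reverse) and decreases by $1$ at each $\X=1$ step, starting from $S_n$ trees, which matches $S_i=1+\sum_{j\le i}\X_j$. Assembling the two cases yields the formula $\proba{\H^n_{i-1}(v)-\H^n_i(v)=1\mid \F^n_n,\dots,\F^n_i}=\frac1{S_i}\mathbbm 1_{\{\X_i=1\}}$.
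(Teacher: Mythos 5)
Your proof is correct and fills in exactly the details that the paper omits: the paper simply declares the lemma ``clear from the definition of Algorithm \ref{algo2}'', and your case split on $\X_i$, the observation that only vertices of $T_2$ gain height, and the count of $S_i$ trees in $\F^n_i$ are the routine verification the authors have in mind. (One small slip: where you first write that $\F^n_i$ contains $S_i$ active vertices, you mean $S_i$ trees---the number of active vertices is $S_n$ throughout the reverse construction---but you immediately state and verify the correct fact, so this does not affect the argument.)
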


This is clear from the definition of Algorithm \ref{algo2}. {It follows from Lemma \ref{lem:accroissementH} that} if $(Y_i)_{1\leq i \leq n}$ are independent Bernoulli random variables of respective parameters $(1/S_i)_{1\leq i \leq n}$, we have for every $u \in \V_n$
\begin{equation}
\label{eq:H0} \H^{n}_{0}(u) \quad \mathop{=}^{(d)} \quad \sum_{i=1}^{\birth_{n}(u)} Y_i\1_{\{\X_i=1\}}.
\end{equation}

The next result identifies the law of the birth time of a vertex of $\T^{n}$ chosen uniformly at random.

\begin{lemma}
\label{lem:bunif}
Fix $1 \leq n \leq \tau$. Let $V$ be an element of $\V_{n}$ chosen uniformly at random. For every $1 \leq m \leq n$,
\[
\proba{\birth_n(V) < m}  = \frac{m+1-S_m}{n+1+S_n}.    
\]
\end{lemma}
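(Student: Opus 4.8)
The statement asks for the distribution function of $\birth_n(V)$ where $V$ is uniform among the $N_n := \#\V_n$ vertices of $\T^n$. First I would recall the combinatorial count: by construction $N_n = (S_n+n+1)/2$, so the denominator $n+1+S_n$ in the claim is exactly $2N_n$. Thus the identity to prove is equivalent to
\[
\#\{u \in \V_n : \birth_n(u) < m\} = \frac{m+1-S_m}{2}.
\]
So the whole statement reduces to a deterministic counting of vertices with small birth time, and the probabilistic phrasing is just a convenient repackaging.

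Next I would use the explicit description of birth times given just above the lemma: active vertices $u \in \A_n$ have $\birth_n(u) = n$, and a frozen vertex labelled $i \in \Fbb_n$ has $\birth_n(u) = i - 1$. Since $m \le n$, no active vertex satisfies $\birth_n(V) < m$, so
\[
\#\{u \in \V_n : \birth_n(u) < m\} = \#\{i \in \Fbb_n : i - 1 < m\} = \#\{i \in \llbracket 1, m \rrbracket : \X_i = -1\}.
\]
Now I count the $-1$'s among $\X_1,\dots,\X_m$: if $k$ of them are $-1$ and $m-k$ are $+1$, then $S_m = 1 + (m-k) - k = m + 1 - 2k$, hence $k = (m+1-S_m)/2$. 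This gives exactly the numerator, and dividing by $2N_n = n+1+S_n$ yields the claimed probability.

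**Main obstacle.**
Honestly there is no serious obstacle here; this is a bookkeeping lemma. The only things to be careful about are: (i) that the formula $N_n = (S_n+n+1)/2$ is valid in the range $1 \le n \le \tau$, which is exactly the standing assumption, so that the denominator is genuinely $2 N_n$ and $V$ is well-defined; (ii) that $\birth_n$ is being computed for $\T^n$ built by Algorithm \ref{algo2} (equivalently, that the label set $\V_n$ is the deterministic set from \eqref{eq:labels}), so the count of frozen labels in $\llbracket 1,m\rrbracket$ really is a function of $\Xb$ alone and not random; and (iii) the strict inequality $\birth_n(u) < m$ translating to $i - 1 < m$, i.e. $i \le m$, for frozen vertices, which is where the ``$+1$'' in $m+1-S_m$ comes from. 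Once these are in place the proof is the three displayed lines above.
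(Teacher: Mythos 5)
Your proof is correct and is essentially the same as the paper's: both reduce the statement to counting the frozen vertices with labels in $\llbracket 1,m\rrbracket$ and dividing by the total vertex count $\#\V_n$, using the explicit formulas $\birth_n(u)=u-1$ for $u\in\Fbb_n$, $\birth_n(u)=n$ for $u\in\A_n$, and the identity $\sum_{i=1}^k \mathbbm{1}_{\{\X_i=-1\}}=(k+1-S_k)/2$. The only cosmetic difference is that you route through $N_n=(S_n+n+1)/2$, while the paper writes $\#\A_n+\#\Fbb_n$ directly.
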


\begin{proof}
{We noted} that $\birth_{n}(u)=n$ if $u\in \A_{n}$ and $\birth_{n}(u) =u-1$ if $u \in \Fbb_{n}$. Also, conditionally given the fact that $V \in  \Fbb_{n}$, $\birth_{n}(V)$ is uniform on $\Fbb_{n}$. Since  $\#\A_{n}=S_{n}$ and  $\#\Fbb_{n}=\sum_{i=1}^n \mathbbm{1}_{\{\X_i=-1\}}$, it follows by {the} definition of $\Fbb_{n}$ that
\[\proba{\birth_n(V) < m} = \frac{\sum_{i=1}^m \mathbbm{1}_{\{\X_i=-1\}}}{S_n+\sum_{i=1}^n \mathbbm{1}_{\{\X_i=-1\}}} = \frac{m+1-S_m}{n+1+S_n},\]
where for the last equality we have used the fact that $\sum_{i=1}^k \mathbbm{1}_{\{\X_i=-1\}}=(k-S_{k}+1)/2$ for every $1 \leq k \leq n$.
\end{proof}

 The last useful result identifies the law of the coalescence times between two vertices.

\begin{lemma}
\label{loi du temps de coalescence}
Fix $1 \leq n \leq \tau$ and consider $u,v \in \V_{n}$. Then for every $0 \leq c < \birth_{n}(u) \wedge \birth_{n}(v)$ with $\X_{c+1}=1$:
\[\proba{\coal_{n}(u,v)=c}=\frac{1}{\binom{S_{c+1}}{2}} \prod_{\substack{i=c+2 \\ \text{s.t. } \X_i=1}}^{ \birth_{n}(u) \wedge \birth_{n}(v) } \left(1-\frac{1}{\binom{S_i}{2}}\right).\]
\end{lemma}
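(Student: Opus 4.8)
The strategy is to read the coalescence event directly off Algorithm 2, working backwards in time from step $n$ to step $0$. Fix $u,v \in \V_n$ and set $b = \birth_n(u) \wedge \birth_n(v)$. For $u$ and $v$ to eventually lie in the same tree of $\F^n_0$, they must in particular both be present in the forest, so nothing can happen before step $b$: the coalescence time satisfies $\coal_n(u,v) \le b$, and moreover (since a coalescence only occurs when reading a $+1$) on the event $\{\coal_n(u,v) = c\}$ we must have $\X_{c+1}=1$, which is why the statement restricts to such $c$. So assume $\X_{c+1}=1$ and $0 \le c < b$.

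The key observation is the following. For each index $i$ with $c+2 \le i \le b$ and $\X_i = 1$, consider the step of Algorithm 2 that reads $\X_i$ (this takes $\F^n_i$ to $\F^n_{i-1}$ by merging two uniformly chosen distinct trees). Just before this step, $u$ lies in some tree $T_u$ and $v$ lies in some tree $T_v$ of $\F^n_i$; conditionally on $\{\coal_n(u,v) < i\}$ (equivalently, on $u,v$ not yet being in the same tree), we have $T_u \ne T_v$. The pair $(T_1,T_2)$ chosen at this step is uniform among ordered pairs of distinct trees, and the number of trees in $\F^n_i$ is exactly $S_i$ (this is part of the correctness of Algorithm 2; it is the ``coalescent side'' of the bijection, and $S_i$ active-or-not trees corresponds to $S_i$ active vertices after reading the first $i$ letters). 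The event that $u$ and $v$ get merged at this step is exactly $\{\{T_1,T_2\} = \{T_u,T_v\}\}$, which has probability $1/\binom{S_i}{2}$, and — crucially — this conditional probability does not depend on anything else about the current forest, only on the fact that $u,v$ are currently separated. Indices $i$ with $\X_i = -1$ only add a fresh isolated tree and cannot merge $u$ with $v$, so they contribute nothing.

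Hence, letting $I = \{ i : c+2 \le i \le b, \ \X_i = 1\}$ enumerated as $i_1 > i_2 > \cdots$ in the order Algorithm 2 encounters them (decreasing $i$), the sequence of indicator random variables ``$u,v$ merge at step $i_k$'' behaves, conditionally on not having merged yet, like independent coin flips: for each $i \in I$ the conditional probability of merging is $1/\binom{S_i}{2}$ given $\{\coal_n(u,v) < i\}$. Therefore
\[
\proba{\coal_n(u,v) < c+1} = \prod_{i \in I}\left(1 - \frac{1}{\binom{S_i}{2}}\right),
\]
and on this event the step reading $\X_{c+1}$ merges the (still distinct) trees containing $u$ and $v$ with conditional probability $1/\binom{S_{c+1}}{2}$, giving $\{\coal_n(u,v) = c\}$. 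Multiplying these yields exactly the claimed formula. To make this rigorous I would set it up as a finite backward induction on the index $i$ running from $b$ down to $c+1$: writing $p_i := \proba{\coal_n(u,v) < i}$, condition on $\F^n_n, \ldots, \F^n_i$ and use that on $\{\coal_n(u,v)<i\}$ the tree containing $u$ and the tree containing $v$ are two fixed distinct trees among the $S_i$ trees of $\F^n_i$, so $\proba{\coal_n(u,v) < i-1 \mid \F^n_n,\ldots,\F^n_i} = \1_{\{\coal_n(u,v)<i\}} \cdot (1 - \tfrac{1}{\binom{S_i}{2}} \1_{\{\X_i=1\}})$; taking expectations gives $p_{i-1} = p_i(1 - \tfrac{1}{\binom{S_i}{2}}\1_{\{\X_i=1\}})$, and iterating from $i=b$ (where $p_b = 1$ since $\coal_n(u,v) < b$ always) down to $i = c+2$ gives $p_{c+1} = \prod_{i\in I}(1-1/\binom{S_i}{2})$; one final step handles the merge at $c+1$.

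**Main obstacle.** The only genuinely delicate point is justifying that the merge probability at step $i$ is $1/\binom{S_i}{2}$ \emph{uniformly}, i.e.\ that conditionally on $\F^n_n,\ldots,\F^n_i$ and on the event that $u,v$ are still separated, the identities of the two trees containing $u$ and $v$ are determined (they are just ``the tree of $\F^n_i$ containing $u$'' and ``the tree containing $v$''), while the pair $(T_1,T_2)$ chosen at this step is, by construction of Algorithm 2, uniform over ordered pairs of distinct trees and independent of everything read so far — so that the conditional merge probability is the constant $1/\binom{S_i}{2}$ and the successive merge-attempts are conditionally independent Bernoullis. This needs the bookkeeping that $\F^n_i$ has exactly $S_i$ trees (immediate from the construction: $\F^n_n$ has $S_n$ trees, a $-1$ step adds one and a $+1$ step removes one, matching the recursion $S_{i-1} = S_i - \X_i$), and a careful statement of the conditional-independence of the Algorithm 2 choices, after which the computation is the routine telescoping product above.
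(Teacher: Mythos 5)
Your proof is correct and takes essentially the same approach as the paper: the paper's proof is a terse one-liner interpreting the factors of the formula as the successive ``no-merge'' probabilities and the final merge probability when running Algorithm \ref{algo2}, which is exactly the computation you carry out (and justify more carefully via the backward induction on $p_i = \P(\coal_n(u,v)<i)$ and the bookkeeping that $\F^n_i$ has $S_i$ trees).
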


\begin{proof}
When running Algorithm \ref{algo2}, the factors in the product correspond to the probability that the two vertices born at times $\birth_n(u)$ and $\birth_n(v)$ do not coalesce until time $c$ and the factor $1/\binom{S_{c+1}}{2}$ corresponds to the {conditional} probability that the two vertices coalesce at time $c$, {given that they have not previously done so}.
\end{proof}

\subsection{Equivalence between the two constructions}
\label{ssec:constructions}
\begin{table}[htbp]\caption{Table of the main notation and symbols introduced in Section \ref{ssec:constructions}.}
\centering
\begin{tabular}{c c p{11cm} }
\toprule
$\hat{\T}^n(\Xb)$ &  & the tree obtained from $\T^n(\Xb)$ by relabelling all its active vertices by ``$a$'' \\
\hline
\makecell[cl]{$\mathfrak T_{k,n}(A)$, $n\geq 1$ \\$0 \leq k \leq n$, $A \subset \N$ \\ $\#A= 2n-k-1$} && \makecell[cl]{set of rooted trees with $n$ vertices, such that all the $n-1$ edges \\and $n-k$ of the vertices are labelled in a one-to-one manner \\ with the integers of $A$, while the other $k$ vertices are labelled \\ with the letter ``$a$''} \\
\hline
$\mathfrak T_{k,n}^+(A)$ && \makecell[cl]{set of all trees in $\mathfrak T_{k,n}(A)$ such that edge-labels increase along \\ paths directed {away} from the root, and all vertices labelled with an \\integer (i.e.~not ``$a$'') have a label which is larger than the labels \\of their adjacent edges} \\
\hline
$\mathfrak T_{k,n}^+$ && $\mathfrak T_{k,n}^+(A)$ with $A=\llbracket1,2n-k-1\rrbracket$\\
\bottomrule
\end{tabular}
\label{tab:equivalence}
\end{table}

Observe that the active vertices of $\T_n$ are all labelled $a$ while the active vertices of $\T^n$ are labelled $a_{1}, \ldots,a_{S_{n}}$. It turns out that $\T_n$ is equal in law to $\T^n$ when its active vertices are all relabelled $a$. More precisely, let $\hat{\T}^n$ be the tree obtained from $\T^n$ by relabeling its $S_{n}$ active vertices by $a$ (see the right-most part of Fig.~\ref{fig:growthcoalescent} for an illustration).

\begin{theorem}
\label{thm:samelaw}
The two trees  $\T_n$ and $\hat{\T}^n$ have the same distribution.
\end{theorem}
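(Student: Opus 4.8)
The plan is to prove the equality in distribution by induction on the number $n$ of steps read, exhibiting a coupling between the forward recursive construction (Algorithm~\ref{algo1}) and the time-reversed growth-coalescent construction (Algorithm~\ref{algo2}). The natural way to organize the bookkeeping is via the sets of trees $\mathfrak T_{k,n}^+$ of Table~\ref{tab:equivalence}: one first checks that for $0\le n\le\tau$, both $\T_n$ and $\hat{\T}^n$ take values in the set obtained from $\mathfrak T^+_{S_n,\,N_n}$ by relabelling the $S_n$ active vertices with the common letter ``$a$''. Indeed, in $\T_n$ edge-labels increase along root-directed paths because a vertex/edge with label $i$ is created at step $i$ and only descendants appended afterwards carry larger labels; and a frozen vertex receives its integer label at a step strictly later than the step at which the edge above it was created, hence its label exceeds that of its adjacent (parent) edge. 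The same two monotonicity properties hold for $\hat{\T}^n$ by the dual reasoning on Algorithm~\ref{algo2}: reading $\X_n,\dots,\X_1$, an edge labelled $i$ is added at reversed-step $i$ joining two roots, and all later additions (larger reversed time means \emph{smaller} index, so ``earlier'' in forward time — care is needed here) lie above it; the frozen-vertex label condition is built into the fact that a vertex labelled $i$ is born as an isolated tree when $\X_i=-1$ and only gets an edge below it at a strictly earlier forward-time step.

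Next I would set up the induction itself on $\T_n$. The base case $n=0$ is trivial. For the inductive step, assume $\T_{n-1}\overset{(d)}{=}\hat{\T}^{n-1}$; I claim more, namely that one can compute the one-step transition probabilities of the forward chain $(\T_n)$ and match them with a combinatorial identity for $\hat{\T}^n$. Concretely, fix a target double-labelled tree $t\in\mathfrak T^+_{S_n,N_n}$ (with active vertices merged to ``$a$''). If $\X_n=-1$, then $t$ is obtained from a unique $\T_{n-1}$-value $t'$ by turning one active vertex into the frozen vertex labelled $n$, and $\proba{\T_n=t\mid \T_{n-1}=t'}=1/S_{n-1}$ where $S_{n-1}=S_n+1$. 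If $\X_n=+1$, then $t$ arises from a unique $t'$ by deleting the (unique, since it is the largest) leaf-edge labelled $n$ together with its endpoint; the attachment step gives $\proba{\T_n=t\mid\T_{n-1}=t'}=1/S_{n-1}$ where $S_{n-1}=S_n-1$, and here one must note that the new vertex being labelled ``$a$'' means several distinct pre-images would in principle be conflated, but since the vertex added is fresh and unlabelled there is still exactly one relevant $t'$. Combining with the induction hypothesis yields a formula $\proba{\T_n=t}=\sum_{t'}\proba{\hat\T^{n-1}=t'}\cdot(1/S_{n-1})$; counting shows this is uniform-like within each fibre, and one gets a clean closed product formula $\proba{\T_n=t}=\prod_{i=1}^{n}\bigl(\text{something depending only on }S_i\bigr)$ — in fact $\proba{\T_n=t}$ does not depend on $t$ beyond membership in the ambient set, \emph{except} through the automorphism/labelling count, so that the forward chain is ``uniform'' on $\mathfrak T^+_{S_n,N_n}$ in the appropriate weighted sense.

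Then I would run the symmetric computation for $\hat{\T}^n$ directly from Algorithm~\ref{algo2}, again by induction but now on the reversed steps, or more slickly by a direct enumeration: the probability that Algorithm~\ref{algo2} outputs a given (relabelled) tree $t$ is $\prod_{i:\X_i=1}1/\binom{S_i}{2}$ times the number of histories producing $t$, and a bijective argument (peeling off, at each forward time $i$ with $\X_i=+1$, the edge labelled $i$, which by the ``$\mathfrak T^+$'' conditions is forced to be the minimal-label edge incident to a correctly placed sub-configuration) shows the number of such histories equals $\prod_{i:\X_i=1}(S_i-1)$ up to the same labelling factor, so the two expressions for $\proba{\T_n=t}$ and $\proba{\hat\T^n=t}$ coincide term by term. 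The main obstacle, and the part deserving the most care, is precisely the bijection between ``histories of Algorithm~\ref{algo2}'' and the target trees: one must verify that the $\mathfrak T^+_{k,n}$ constraints (increasing edge-labels along root-directed paths, and frozen-vertex labels dominating their parent-edge labels) are \emph{exactly} the conditions guaranteeing that the peeling procedure is well-defined and reversible at every step — i.e.\ that at reversed time $i$ there is a unique edge that ``could have just been added'' and a unique isolated tree that ``could have just been born''. Once that deterministic reconstruction lemma is in place, matching the probabilities is a routine product computation, and the theorem follows by the induction.
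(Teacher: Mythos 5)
Your plan is built around the same key observation as the paper's proof: in each direction there is a \emph{unique} history producing a given admissible output (this is exactly what the conditions defining $\mathfrak T_{k,n}^+$ buy you), so the probability is a telescoping product over steps, and one just matches the two products. The paper isolates the forward direction as Lemma~\ref{lem:loiTn} and runs the analogous ``unique $\leadsto$-chain'' argument for the forests $\F^n_n,\ldots,\F^n_0$; your reconstruction lemma is that same observation.

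However, the arithmetic on the coalescent side of your plan is not right as stated. In Algorithm~\ref{algo2} the pair $(T_1,T_2)$ is \emph{ordered} (the merged tree is re-rooted at $r(T_1)$), so a given merge at a $+1$ step has probability $1/(S_i(S_i-1))$, not $1/\binom{S_i}{2}$. More importantly, there is exactly \emph{one} history of Algorithm~\ref{algo2} producing any fixed, fully labelled output $\widetilde T_n$ (with active vertices $a_1,\dots,a_{S_n}$); your proposed count $\prod_{i:\X_i=1}(S_i-1)$ of ``histories producing $t$'' is not what appears. The multiplicity you need is instead the $S_n!$ ways of assigning the labels $a_1,\dots,a_{S_n}$ to the active vertices of $t$, which is precisely how the paper passes from the law of $\T^n$ to that of $\hat\T^n$. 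With those two corrections your plan closes, since the identity
$\prod_{i:\X_i=1}\tfrac{1}{S_i(S_i-1)} = \tfrac{1}{S_n!}\prod_{i=1}^n\tfrac{1}{S_{i-1}}$
(checked by induction on $n$, splitting on the sign of $\X_n$) reconciles the two product formulas.

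Two smaller remarks. First, the induction scaffolding you propose for the forward chain is unnecessary overhead: once you have the unique $\leadsto$-chain $T_0\leadsto\cdots\leadsto T_n$, the product formula $\P(\T_n=T_n)=\prod_{i=1}^n 1/S_{i-1}$ drops out directly, with no induction hypothesis on $\hat\T^{n-1}$ entering. Second, be careful in your parenthetical about ``reversed time'' versus ``forward time'' in checking the $\mathfrak T^+$ conditions for $\hat\T^n$: the forest process is indexed so that edge-labels \emph{decrease} as you run Algorithm~\ref{algo2}, and the condition that a frozen vertex's integer label exceeds its parent edge's label is equivalent to the vertex being born as an isolated one-vertex tree strictly before (in the reversed run) that edge is added. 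Your sentence gestures at this but in a way that could easily be miswritten; the cleanest phrasing is the paper's: a vertex labelled $i$ first appears in $\F^n_{i-1}$, while an edge labelled $j$ first appears in $\F^n_{j-1}$, and the edge must connect two already-present roots.
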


To prove this result, we first identify the law of $\T_n$.  {Let us determine} the range of $\T_n$. For $n\in \N$, $0 \leq k \leq n$ and $A \subset \N$ with $\#A = 2n-k-1$, let $\mathfrak T_{k,n}(A)$ be the set of rooted trees with $n$ vertices, such that all the $n-1$ edges and $n-k$ of the vertices are labelled in a one-to-one manner with the integers of $A$, while the other $k$ vertices are labelled with the letter $a$. We then define $\mathfrak T_{k,n}^+(A)$ to be the set of all trees in $\mathfrak T_{k,n}(A)$ such that  edge labels increase along paths directed from the root, and all vertices labelled with an integer (i.e.~not $a$) have a label which is larger than the labels of their adjacent edges. We set $\mathfrak T_{k,n}^+ =\mathfrak T_{k,n}^+(\llbracket1,2n-k-1\rrbracket)$. Note that $\T_n$ is an element of $\mathfrak T_{S_n,N_n}^+$ for $0 \leq n \leq \tau$ (see the left-most tree in Fig.~\ref{fig exemple d'image par b} for an example) where $N_n = (S_n+n+1)/2$ is the total number of vertices in $\T_n$. In particular, $\mathfrak T_{k,n}^+$ is the set of all possible trees with $k$ active vertices and $n-k$ frozen vertices which can be obtained by Algorithm \ref{algo1}.

\begin{lemma}
\label{lem:loiTn}
Fix $0 \leq n \leq \tau$ and $T_n \in \mathfrak T_{S_n,N_n}^+$. Then
\[\proba{\T_n=T_{n}}= \prod_{1 \leq i \leq n} \frac{1}{S_{i-1}}.\]
\end{lemma}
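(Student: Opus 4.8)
The plan is to prove the formula by induction on $n$, unwinding one step of Algorithm \ref{algo1}. Fix $n \geq 1$ and a target tree $T_n \in \mathfrak T_{S_n,N_n}^+$. The idea is to identify the unique tree $T_{n-1}$ that can produce $T_n$ in one step of the algorithm, compute the conditional probability of that step, and multiply by $\proba{\T_{n-1}=T_{n-1}}$ given by the induction hypothesis. The base case $n=0$ is immediate since $\T_0$ is deterministic (the single root labelled $a$) and the empty product equals $1$.

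For the inductive step I would distinguish two cases according to the value of $\X_n$. If $\X_n = -1$, then in $T_n$ exactly one vertex carries the label $n$; since labels of integer-labelled vertices must exceed the labels of adjacent edges and all edges of $T_n$ have labels in $\llbracket 1, n-1\rrbracket$ here (as $2N_n - S_n - 1 = n$ and the vertex labelled $n$ is the maximal such label used on a vertex), this vertex must have been active at time $n-1$; the unique predecessor $T_{n-1}$ is obtained by relabelling it with ``$a$'', and one checks $T_{n-1} \in \mathfrak T^+_{S_{n-1}, N_{n-1}}$ (note $N_{n-1} = N_n$ and $S_{n-1} = S_n + 1$ since $\X_n = -1$). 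The conditional probability that Algorithm \ref{algo1} freezes precisely this active vertex among the $S_{n-1}$ active vertices of $\T_{n-1}$ is $1/S_{n-1}$. If $\X_n = +1$, then $T_n$ has a unique edge labelled $n$ (the maximal edge label), and because edge-labels increase away from the root, this edge is ``pendant'': the endpoint farther from the root is a leaf, and—since that leaf has the largest possible label considerations forcing it to be ``$a$''—it is an active leaf. Removing this leaf and its incident edge yields the unique predecessor $T_{n-1} \in \mathfrak T^+_{S_{n-1},N_{n-1}}$ with $S_{n-1} = S_n + 1$, $N_{n-1} = N_n - 1$, and the conditional probability that the algorithm attached the new vertex to the neighbour of this leaf is again $1/S_{n-1}$. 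In both cases $\proba{\T_n = T_n \mid \T_{n-1} = T_{n-1}} = 1/S_{n-1} = 1/S_{(n-1)}$, and uniqueness of $T_{n-1}$ means $\proba{\T_n = T_n} = \proba{\T_{n-1}=T_{n-1}} \cdot \tfrac{1}{S_{n-1}} = \prod_{1 \leq i \leq n-1}\tfrac{1}{S_{i-1}} \cdot \tfrac{1}{S_{n-1}} = \prod_{1 \leq i \leq n}\tfrac{1}{S_{i-1}}$, completing the induction.

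The step I expect to be the main obstacle is the \emph{reconstruction / uniqueness} claim: that from a tree $T_n \in \mathfrak T^+_{S_n, N_n}$ one can unambiguously read off which step was last performed and recover a unique, valid predecessor $T_{n-1}$ in the right class. This requires carefully using the two defining constraints of $\mathfrak T^+_{k,n}(A)$ — edge-labels increasing along root-directed paths, and integer vertex-labels exceeding adjacent edge-labels — to argue that the vertex or edge bearing the maximal relevant label sits in a ``removable'' position (an active leaf, resp.\ a pendant edge to an active leaf), and that after removal the increasing/constraint properties are preserved so that $T_{n-1}$ genuinely lies in $\mathfrak T^+_{S_{n-1},N_{n-1}}$. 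One also has to check the bookkeeping that the value $\X_n$ is consistent with $S_n$ vs.\ $S_{n-1}$ (it is, by $S_n = S_{n-1} + \X_n$), so the case split on $\X_n$ is exactly the case split on whether the largest available label in $T_n$ is carried by a vertex or by an edge. Once this structural lemma is in place, the probabilistic content is the one-line observation that each admissible step of Algorithm \ref{algo1} has conditional probability $1/S_{n-1}$, independent of which active vertex is chosen, so the induction closes cleanly.
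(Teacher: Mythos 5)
Your proof follows essentially the same route as the paper's: both unwind Algorithm~\ref{algo1} one step at a time, recover the unique predecessor $T_{n-1}$ from $T_n$ by locating the largest label $n$, and observe that each admissible step has conditional probability $1/S_{i-1}$. The paper phrases this as the existence and uniqueness of a chain $(T_0,\dots,T_{n-1})$ with $T_i\leadsto T_{i+1}$ and telescopes conditional probabilities; your induction is the same argument unrolled. So the decomposition, the key observation, and the probabilistic content coincide.

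The one step you flag as the main obstacle is indeed the delicate point, and your summary of it contains a claim that does not quite hold as stated: that ``the case split on $\X_n$ is exactly the case split on whether the largest available label in $T_n$ is carried by a vertex or by an edge.'' This is not automatic for an arbitrary $T_n\in\mathfrak T^+_{S_n,N_n}$, because that class is defined purely by counting and monotonicity constraints, with no reference to $\Xb$, and can contain trees whose partition of $\llbracket1,n\rrbracket$ into edge-labels and vertex-labels does not agree with $\{i:\X_i=+1\}$ versus $\{i:\X_i=-1\}$. For instance, with $\Xb$ beginning $(+1,+1,-1)$ one has $(S_3,N_3)=(2,3)$, but the path $v_1\!-\!v_2\!-\!v_3$ rooted at $v_1$, with $v_1$ labelled~$2$, $v_2$ and $v_3$ labelled $a$, and edges labelled $1$ and $3$, lies in $\mathfrak T^+_{2,3}$ yet has $\proba{\T_3=T_3}=0$ rather than $1/6$. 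In that example $n=3$ is an edge label even though $\X_3=-1$, so your reconstruction step — and equally the paper's assertion that a predecessor chain exists — breaks down. To close the induction one must restrict (as the paper implicitly does) to those $T_n$ whose set of edge labels equals $\{1\le i\le n:\X_i=+1\}$, i.e.\ the trees actually attainable from $\Xb$; those are the only ones relevant in the subsequent proof of Theorem~\ref{thm:samelaw}, and on that subset your predecessor is uniquely and consistently defined and the rest of your argument goes through cleanly.
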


\begin{proof}
For two trees $ T,T'$ with $T \in \mathfrak T_{k,n}^+$, we write $T \leadsto T'$ if either $T' \in \mathfrak T_{k+1,n}^+$ and $T'$ is obtained by replacing in $T$ the label of a vertex labeled $a$ by $2n-k$, or $T' \in \mathfrak T_{k,n+1}^+$ and $T'$ is obtained from $T$ by adding an edge labelled by $2n-k$ between a new vertex labeled $a$ and an existing $a$-labeled vertex.

Since $T_{n} \in \mathfrak T_{S_n,N_n}^+$, observe that there is a unique sequence $(T_{0}, \ldots,T_{n-1})$ such that $T_{i} \in   \mathfrak T_{S_i,N_i}^+$ for every $0 \leq i \leq n-1$ and such that $T_{i} \leadsto T_{i+1}$ for every $0 \leq i \leq n-1$. In particular, 
\[\proba{\T_n=T_{n}}=\proba{(\T_0,\T_1 \ldots, \T_n)=(T_{0},T_{1}, \ldots,T_{n}) }= \prod_{i=1}^{n} \proba{\T_i=T_{i} |\T_{i-1}=T_{i-1} }.\]
Then observe that $ \proba{\T_i=T_{i} |\T_{i-1}=T_{i-1} }= \frac{1}{S_{i-1}}$. Indeed, $T_{i-1}$ has $S_{i-1}$ active vertices. Thus, if $\X_{i}=1$, then the probability of attaching a new vertex to a given active vertex is $1/S_{i-1}$; if $\X_{i}=-1$, then the probability of freezing a given active vertex is $1/S_{i-1}$.
\end{proof}

\begin{proof}[Proof of Theorem \ref{thm:samelaw}]
First of all, {\sout{recall that}} by Lemma \ref{lem:loiTn}, 
for every $T_{n} \in  {\mathfrak T}_{S_n,N_n}^+$ we have
\[\proba{{\T}_n={T}_{n}}= \prod_{1 \leq i \leq n} \frac{1}{S_{i-1}}.\]
Now let  $\widetilde {\mathfrak T}_{k,n}^+$ be the set of all trees obtained from $\mathfrak T_{k,n}^+$ by labelling $a_{1}, \ldots, a_{k}$ their $k$ active vertices. It is enough to show that for every $\widetilde{T}_{n} \in \widetilde {\mathfrak T}_{k,n}^+$ we have
\begin{equation}
\label{eq:toshow}
\proba{{\T}^n=\widetilde{T}_{n}}=\frac{1}{S_{n}!}\prod_{1 \leq i \leq n} \frac{1}{S_{i-1}}.
\end{equation}
Indeed, since  there are $S_{n}!$ ways to relabel by $a_{1}, \ldots,a_{k}$ the $S_{n}$ active vertices of any given tree $T_{n} \in  {\mathfrak T}_{k,n}^+$, this will imply that $\proba{{\T}_n={T}_{n}}=\P(\hat{\T}^n=T_{n})$.

To establish \eqref{eq:toshow} we need to introduce some notation. Let
$\widetilde{\mathfrak F}_k^n$ be the set of all forests of rooted, edge-labelled and vertex-labelled trees, such that:
\begin{itemize}[noitemsep,nolistsep]
\item[--] $S_{n}$ vertices are labelled $a_{1}, \ldots,a_{S_{n}}$;
\item[--] all the other vertices are labelled in a one-to-one fashion by $\llbracket k+1,n\rrbracket \cap \{1 \leq i \leq n : \X_{i}=-1\} $;
\item[--] the edges are labelled in a one-to-one fashion by $\llbracket k+1,n\rrbracket \cap \{1 \leq i \leq n : \X_{i}=1\} $.
\end{itemize}
In addition, the labelling satisfies the following condition: edge-labels increase along paths directed from the roots, and all vertices labelled with an integer (i.e.~not $a$) have a label which is larger than the labels of their adjacent edges. Notice that forests in $\widetilde{\mathfrak F}_k^n$ are made of $S_{k}$ trees.

Observe that the random forest $\F^{n}_{i}$ satisfies $\F^{n}_{i} \in \widetilde{\mathfrak F}_i^n$ for every $0 \leq i \leq n$. If $F_{i} \in   \widetilde{\mathfrak F}_i^n$ and  $F_{i-1} \in   \widetilde{\mathfrak F}_{i-1}^n$, we write $F_{i} \leadsto F_{i-1}$ if $F_{i-1}$ is obtained from $F_{i}$ by either adding a new tree made of a sole vertex labeled $i$, or choosing two different trees $T_{1},T_{2}$ in $F_{i}$ and adding an edge labeled $i$ between the roots $r(T_{1})$ and $r(T_{2})$ of respectively $T_{1}$ and $T_{2}$, and  rooting the tree thus obtained at $r(T_{1})$.

Now, let $F^n_0$ be the forest made of the tree $ \widetilde{T}_{n}$. Observe that there is a unique sequence $({F}^{n}_{n}, \ldots,{F}^{n}_{1})$ such that ${F}^{n}_{i} \in \widetilde{\mathfrak F}_i^n$ for every $0 \leq i \leq n-1$ and such that ${F}^{n}_{i} \leadsto {F}^{n}_{i-1}$ for every $1 \leq i \leq n$.  Notice that $ {F}^{n}_{i}$ is made of $S_{i}$ trees. In particular, 
\begin{eqnarray*}
\proba{\T^n=\widetilde{T}_{n}}&=&\proba{(\mathcal{F}^{n}_n,\mathcal{F}^{n}_{n-1}, \ldots, \mathcal{F}^{n}_{0})=({F}^{n}_{n},{F}^{n}_{n-1}, \ldots,{F}^{n}_{0} )}\\
&=& \prod_{i=1}^{n} \proba{\mathcal{F}^{n}_{i-1}={F}^{n}_{i-1} |\mathcal{F}^{n}_{i}={F}^{n}_{i} }. 
\end{eqnarray*}
Then observe that if $\X_{i}=-1$ we have $\proba{\mathcal{F}^{n}_{i-1}={F}^{n}_{i-1} |\mathcal{F}^{n}_{i}={F}^{n}_{i} }=1$ since to build $\mathcal{F}^{n}_{i-1}$ from $\mathcal{F}^{n}_{i}$ with probability one we add a new one-vertex tree labelled $i$ to  $\mathcal{F}^{n}_{i-1}$, and since $\X_{i}=-1$, the forest ${F}^{n}_{i-1}$ is also obtained from ${F}^{n}_{i}$ by adding a new one-vertex tree labelled $i$. If $\X_{i}=1$ we have the equality $\proba{\mathcal{F}^{n}_{i-1}=\mathcal{F}^{n}_{i-1} |\mathcal{F}^{n}_{i}=\mathcal{F}^{n}_{i} }= \frac{1}{S_{i}(S_{i}-1)}$; indeed, there are $S_{i}(S_{i}-1)$ ways of choosing a pair $(T_{1},T_{2})$ of different trees in ${F}^{n}_{i}$, and one of them gives ${F}^{n}_{i-1}$ when adding an edge labeled $i$ between the roots $r(T_{1})$ and $r(T_{2})$ of respectively $T_{1}$ and $T_{2}$,  and rooting the tree thus obtained at $r(T_{1})$.

As a consequence,
\[\proba{{\T}^n=\widetilde{T}_{n}}=  \prod_{\substack{1 \leq i \leq n \\ \X_{i}=1}} \frac{1}{S_{i}(S_{i}-1)}=\frac{1}{S_{n}!}\prod_{1 \leq i \leq n} \frac{1}{S_{i-1}},\]
where the last equality is readily checked by induction. This establishes \eqref{eq:toshow} and completes the proof. 
\end{proof}

\subsection{A connection with increasing binary trees}
\label{lien binaire croissant}

\begin{table}[htbp]\caption{Table of the main notation and symbols introduced in Section \ref{lien binaire croissant}.}
\centering
\begin{tabular}{c c p{12cm} }
\toprule
$\mathfrak B_{k,n}(A)$ &&\makecell[cl]{set of rooted  binary plane trees with $2n-1$ vertices, such that \\$k$ of the leaves are labelled with the letter $a$ and the $2n-k-1$ \\ other vertices are labelled in a one-to-one manner with the \\integers of $A$}\\
\hline
$\mathfrak B_{k,n}^+(A)$  && set of trees in $\mathfrak B_{k,n}(A)$ such that the vertices have increasing labels paths directed from the root. \\
\hline
$\mathfrak B_{k,n}^+$ &&$\mathfrak B_{k,n}^+(A)$ with $A=\llbracket1,2n-k-1\rrbracket$\\
\bottomrule
\end{tabular}
\label{tab:secbinaire}
\end{table}

It turns out that $\mathfrak T_{k,n}(A)$  is in bijection with {a certain set of} increasing plane binary trees, {which are rooted trees, 
with an order among the children of internal vertices (see Sec.~\ref{sous-section lien BGW} for a precise setting), and where every vertex has either zero or two children}.

Such a bijection is well-known in the case $k=0$ (see e.g.~\cite{KS78} and \cite[Sec.~6]{Dev87}). To make this connection explicit, we need some further notation. For $n\geq 1$, $0 \leq k \leq n$ and $A \subset \N$ with cardinality $\#A = 2n-k-1$ we define $\mathfrak B_{k,n}(A)$ to be the set of rooted  binary plane  trees with $2n-1$ vertices, such that $k$ of the leaves are labelled with the letter $a$ and the $2n-k-1$ other vertices are labelled in a one-to-one manner with the integers of $A$. We also define $\mathfrak B_{k,n}^+(A)$ to be the set of trees in $\mathfrak B_{k,n}(A)$ such that the vertices have increasing labels {along} paths directed {away} from the root. We set $\mathfrak B_{k,n}^+ = \mathfrak B_{k,n}^+(\llbracket1,2n-k-1\rrbracket)$ (see the right of Figure \ref{fig exemple d'image par b} {for an example of a tree in $\mathfrak B_{4,15}^+ $}).

We are now ready to define a function $\Phi$ from $\bigcup \mathfrak T_{k,n}(A)$ to $\bigcup \mathfrak B_{k,n}(A)$ where the unions are over all possible triplets $(k,n,A)$. The definition is done by induction.
If $\tau$ is {a} trivial tree with one vertex then {\sout{we simply}} set $\Phi(\tau)=\tau$.
If $\tau$ has at least 2 vertices, then let $u$ be the child of the root $\rho$ such that the edge between $\rho$ and $u$ has the smallest label, denoted by $\ell$, among all the edges adjacent to $\rho$. Denote by $\tau_1$ the sub-tree rooted in $u$ composed of all the descendants of $u$ and $\tau_2$ the sub-tree rooted in $\rho$ composed of all the vertices that are not in $\tau_1$. Then $\Phi(\tau)$ is the binary tree starting with a root, labelled $\ell$, with two children: the first child on the left gives birth to $\Phi(\tau_1)$ and the second one on the right gives birth to $\Phi(\tau_2)$ (see Figure \ref{fig definition de la bijection b} and \ref{fig exemple d'image par b}).

\begin{figure}[h!]
\begin{center}
\begin{tikzpicture}[scale = 0.8,
sommet/.style = {draw,circle, font=\scriptsize,inner sep=0,minimum size=15pt},
etiquete/.style = {font = \footnotesize}]

\node[] (t) at (0,-0.6) {$\tau$};
\node[] (t1) at (-1.2,2.4) {$\tau_1$};
\node[] (t2) at (0.6,1.2) {$\tau_2$};
\node[sommet] (0) at (0,0) {$\rho$};
\node[sommet] (1) at (-1.1,1.2) {$u$};
\node[etiquete] (l) at (-0.7,0.5) {$\ell$};

\draw (0) -- (1);
\draw (1) to[out=140,in=50,distance=4cm] (1);
\draw (0) to[out=30,in=100,distance=4cm] (0);

\end{tikzpicture}
\hspace{-8em}
\begin{tikzpicture}[scale = 0.8,
sommet/.style = {draw,circle, font=\scriptsize,inner sep=0,minimum size=15pt},
etiquete/.style = {font = \footnotesize}]

\node[] (bt) at (0,-0.6) {$\Phi(\tau)$};
\node[] (bt1) at (-1.1,2.1) {$\Phi(\tau_1)$};
\node[] (bt2) at (1.1,2.1) {$\Phi(\tau_2)$};
\node[sommet] (0) at (0,0) {$\ell$};
\node[sommet] (1) at (-1,1) {};
\node[sommet] (2) at (1,1) {};

\draw (1) -- (0) -- (2);
\draw (1) to[out=140,in=50,distance=4cm] (1);
\draw (2) to[out=130,in=40,distance=4cm] (2);

\end{tikzpicture}
\caption{Illustration of the construction of the bijection $\Phi$ by induction.}
\label{fig definition de la bijection b}
\end{center}
\end{figure}
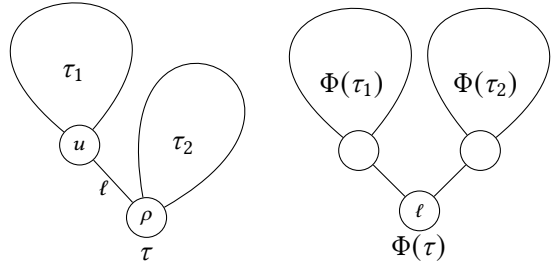

\begin{figure}
\begin{center}
\begin{tikzpicture}[scale = 0.9,
sommet/.style = {draw,circle, font=\scriptsize,inner sep=0,minimum size=15pt},
gele/.style = {fill=cyan},
etiquete/.style = {font = \scriptsize}]

\node[sommet] (0) at (0,0) {$a$};
\node[sommet,gele] (1) at (-1,1.5) {8};
\node[sommet] (2) at (1,1.5) {$a$};
\node[sommet,gele] (3) at (-1.7,3) {4};
\node[sommet] (4) at (-0.7,3) {$a$};
\node[sommet,gele] (5) at (0.7,3) {7};
\node[sommet] (6) at (1.7,3) {$a$};
\node[sommet,gele] (7) at (-0.7,4.5) {11};

\draw (0) --node[left,etiquete] {1} (1);
\draw (0) --node[right,etiquete] {3} (2);
\draw (1) --node[left,etiquete] {2} (3);
\draw (1) --node[right,etiquete] {6} (4);
\draw (2) --node[left,etiquete] {5} (5);
\draw (2) --node[right,etiquete] {10} (6);
\draw (4) --node[left,etiquete] {9} (7);

\end{tikzpicture}
\hspace{2em}
\begin{tikzpicture}[scale = 0.9,
sommet/.style = {draw,circle, font=\scriptsize,inner sep=0,minimum size=15pt},
gele/.style = {fill=cyan},
etiquete/.style = {font = \footnotesize}]

\node[sommet] (0) at (0,0) {1};
\node[sommet] (1) at (-1.5,1) {2};
\node[sommet] (2) at (1.5,1) {3};
\node[sommet,gele] (3) at (-2.5,2) {4};
\node[sommet] (4) at (-1,2) {6};
\node[sommet] (5) at (1,2) {5};
\node[sommet] (6) at (2.5,2) {$a$};
\node[sommet] (7) at (-1.5,3) {9};
\node[sommet,gele] (8) at (-0.5,3) {8};
\node[sommet,gele] (9) at (0.5,3) {7};
\node[sommet] (10) at (1.5,3) {10};
\node[sommet,gele] (11) at (-2,4) {11};
\node[sommet] (12) at (-1,4) {$a$};
\node[sommet] (13) at (1,4) {$a$};
\node[sommet] (14) at (2,4) {$a$};

\draw (11)--(7)--(4)--(1)--(0)--(2)--(5)--(10)--(13);
\draw (7)--(12);
\draw (8)--(4);
\draw (3)--(1);
\draw (2)--(6);
\draw (5)--(9);
\draw (10)--(14);

\end{tikzpicture}
\caption{An example of a tree $\tau \in \mathfrak T_{4,8}^+$ on the left and its image by the bijection $\Phi(\tau) \in \mathfrak B_{4,15}^+$ on the right.}
\label{fig exemple d'image par b}
\end{center}
\end{figure}
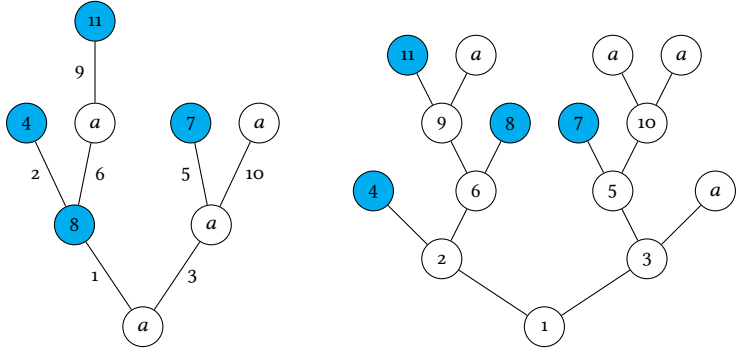

\begin{proposition}
The function $\Phi$ defined above is a bijection between $\mathfrak T_{k,n}^+$ and $\mathfrak B_{k,n}^+$.
\end{proposition}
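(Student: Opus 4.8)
The plan is to show that $\Phi$ is a well-defined bijection by exhibiting an explicit inverse and checking that both $\Phi$ and its inverse preserve the combinatorial constraints defining $\mathfrak T_{k,n}^+$ and $\mathfrak B_{k,n}^+$. First I would verify that $\Phi$ maps $\mathfrak T_{k,n}^+$ into $\mathfrak B_{k,n}^+$: by induction on the number of vertices, if $\tau \in \mathfrak T_{k,n}^+$ has at least two vertices, then removing the minimal-label edge $\ell$ adjacent to the root splits $\tau$ into $\tau_1$ (rooted at $u$) and $\tau_2$ (rooted at $\rho$); one checks that the edge- and vertex-label sets of $\tau_1$ and $\tau_2$ partition $A \setminus \{\ell\}$ correctly, that each still satisfies the ``increasing along root-directed paths'' condition and the ``vertex-label exceeds adjacent edge-labels'' condition, and that the vertex counts and active-leaf counts add up so that $\tau_1 \in \mathfrak T_{k_1,n_1}^+$, $\tau_2 \in \mathfrak T_{k_2,n_2}^+$ with $k_1+k_2 = k$ and $n_1+n_2 = n$. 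Since $\ell$ is smaller than every label in $\tau_1$ and $\tau_2$ (it is the minimal edge-label at the root, hence minimal overall by the increasing condition), placing $\ell$ at the root of $\Phi(\tau)$ above $\Phi(\tau_1)$ and $\Phi(\tau_2)$ keeps labels increasing along root-directed paths, so $\Phi(\tau) \in \mathfrak B_{k,n}^+$. The base case is immediate.

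Next I would construct the inverse $\Psi\colon \mathfrak B_{k,n}^+ \to \mathfrak T_{k,n}^+$, again by induction. Given $\beta \in \mathfrak B_{k,n}^+$ with more than one vertex, the root of $\beta$ carries a label $\ell$ (necessarily the global minimum by the increasing condition) and has a left subtree $\beta_1$ and right subtree $\beta_2$; set $\Psi(\beta)$ to be the tree obtained by taking $\Psi(\beta_2)$, and attaching $\Psi(\beta_1)$ to its root by an edge labelled $\ell$, declaring the root of $\Psi(\beta_1)$ a child of the root of $\Psi(\beta_2)$. One checks that $\beta_1 \in \mathfrak B_{k_1,n_1}^+$, $\beta_2 \in \mathfrak B_{k_2,n_2}^+$ for the appropriate parameters, so by induction $\Psi(\beta_1), \Psi(\beta_2)$ are legal $\mathfrak T^+$-trees; since $\ell$ is the minimum label, the new edge labelled $\ell$ is smaller than every edge on any root-directed path through it and smaller than the label of the vertex $r(\Psi(\beta_1))$ (if that vertex is integer-labelled), so the two defining inequalities of $\mathfrak T_{k,n}^+$ are preserved, and the vertex carrying the minimal edge-label at the root of $\Psi(\beta)$ is exactly $r(\Psi(\beta_1))$, matching the decomposition used to define $\Phi$. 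Then a short induction shows $\Phi \circ \Psi = \mathrm{id}$ and $\Psi \circ \Phi = \mathrm{id}$, using in both directions the fact that the minimal edge-label adjacent to the root of a $\mathfrak T^+$-tree coincides with the root label of the corresponding $\mathfrak B^+$-tree.

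The main obstacle, and the only part requiring genuine care rather than bookkeeping, is checking that the recursive decomposition respects \emph{all} the label constraints simultaneously — in particular that when we split off $\tau_1$ as the descendants of $u$ and keep $\tau_2$ as everything else rooted at $\rho$, the tree $\tau_2$ really still satisfies the ``vertex-label larger than adjacent edge-labels'' condition at $\rho$ (its remaining incident edges are exactly the non-minimal root edges of $\tau$, which are still larger than... well, $\rho$ is labelled $a$ unless $k$ accounts for it, so one must track whether $\rho$ is active) and that the increasing condition is inherited correctly after the root edge is deleted. The cleanest way to organise this is to prove by a single simultaneous induction that $\Phi$ restricted to $\mathfrak T_{k,n}(A)$ (no $+$) is a bijection onto $\mathfrak B_{k,n}(A)$ — which is purely structural and handles the vertex/edge counts and label-set partition — and then separately that $\Phi$ and $\Psi$ restrict to bijections between the ``$+$'' subsets, isolating the monotonicity argument. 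Since only the $+$ statement is asserted, I would present the $+$ version directly but phrase the inductive hypothesis to carry along the unrestricted bijectivity as well, so that the parameter arithmetic $(k_1+k_2,n_1+n_2,A_1 \sqcup A_2 \sqcup \{\ell\})$ is available for free.
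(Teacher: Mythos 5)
Your proof is correct and takes essentially the same approach as the paper: define the explicit inverse $\Psi$ recursively (your definition of $\Psi$ matches the paper's exactly) and verify that it inverts $\Phi$. The paper simply states that the verification is ``a simple matter to check,'' whereas you spell out the bookkeeping — confirming that $\Phi$ and $\Psi$ respect the label-set partitions, the increasing-along-paths condition, and the vertex-label-exceeds-adjacent-edge-labels condition — but these are the details the paper implicitly leaves to the reader, not a different argument.
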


\begin{proof}
It suffices to construct explicitly the inverse bijection $\Psi$ of $\Phi$. We proceed by induction. For a tree $\tau$ with a single vertex let $\Psi(\tau)=\tau$. If $\tau$ is a binary tree with at least 2 children and a root labelled $\ell$, let $\tau_1$ be the tree on the left and  let $\tau_2$ be the tree on the right of the root. Then $\Psi(\tau)$ is obtained by attaching the root of $\Psi(\tau_1)$ to the root of $\Psi(\tau_2)$ with an edge labelled $\ell$; the root of $\Psi(\tau_2)$ becomes the root of $\Psi(\tau)$. It is a simple matter to check that $\Psi$ is indeed the inverse bijection of $\Phi$.
\end{proof}

As a consequence, we can identify the cardinality of $\mathfrak T_{0,n}^+$, which is the set of all possible trees with $0$ active vertices and $n$ frozen vertices which can be obtained by Algorithm \ref{algo1}, {using the fact that $ \vert\mathfrak B_{0,n}^+\vert$ is counted by the so-called tangent numbers \cite{Don75}.}

\begin{corollary}
The cardinality of $\mathfrak T_{0,n}^+$ is given by
\[
\# \mathfrak T_{0,n}^+ = \frac{4^n(4^n-1)\vert B_{2n}\vert}{2n} = \tan^{(2n-1)}(0)
\]
where $B_{n}$ is the $n$-th Bernoulli number and $\tan^{(n)}$ stands for the $n$-th derivative of the tangent function. This sequence of numbers is given by OEIS A000182.
\end{corollary}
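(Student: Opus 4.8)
The plan is to reduce the count to a classical enumeration via the bijection $\Phi$ and then to identify the resulting generating function with $\tan$. By the Proposition just proved, $\Phi$ restricts to a bijection between $\mathfrak T_{0,n}^+$ and $\mathfrak B_{0,n}^+$, so it suffices to compute $\#\mathfrak B_{0,n}^+$. Unwinding the definition, $\mathfrak B_{0,n}^+$ is exactly the set of rooted plane binary trees (every vertex having $0$ or $2$ children) on $2n-1$ vertices carrying the labels $1,\dots,2n-1$ bijectively and increasing along every path directed away from the root, i.e.\ the increasing complete binary trees of size $2n-1$.

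Next I would set up the exponential generating function $y(x)\coloneqq\sum_{n\ge1} b_n\,\frac{x^{2n-1}}{(2n-1)!}$ with $b_n\coloneqq\#\mathfrak B_{0,n}^+$. Since labels increase from the root, the root of any $T\in\mathfrak B_{0,n}^+$ necessarily carries label $1$; deleting it leaves an ordered pair of increasing complete binary trees on a partition of $\{2,\dots,2n-1\}$ into two blocks of (necessarily odd) sizes $2j-1$ and $2k-1$ with $j+k=n$. Relabelling each block by the unique order-preserving bijection gives, for $n\ge2$, the convolution $b_n=\sum_{j+k=n,\ j,k\ge1}\binom{2n-2}{2j-1}b_jb_k$, together with $b_1=1$. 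Translating into generating functions, the term $n=1$ contributes the constant $1$ and the rest assembles into $y(x)^2$, so that $y$ solves $y'(x)=1+y(x)^2$ with $y(0)=0$; the unique (formal, hence analytic) solution is $y(x)=\tan x$. Therefore $b_n=\tan^{(2n-1)}(0)$, which is the second claimed equality. (Equivalently, $\mathfrak B_{0,n}^+$ is in bijection, by reading off the standard descent word, with the down-up alternating permutations of $\llbracket1,2n-1\rrbracket$, counted by the tangent numbers after André.)

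Finally, the first equality is the textbook closed form of the tangent numbers: from $\tan x=\sum_{n\ge1}\frac{(-1)^{n-1}2^{2n}(2^{2n}-1)B_{2n}}{(2n)!}\,x^{2n-1}$ one reads off $\tan^{(2n-1)}(0)=\frac{(-1)^{n-1}2^{2n}(2^{2n}-1)B_{2n}}{2n}$, and since $B_{2n}$ has sign $(-1)^{n-1}$ and $2^{2n}=4^n$ this equals $\frac{4^n(4^n-1)|B_{2n}|}{2n}$; matching the first few values $1,2,16,\dots$ against OEIS A000182 is then immediate. The only mildly delicate point is the bookkeeping in the convolution for $b_n$ — correctly accounting for the ordered (plane) structure and for the binomial coefficient coming from distributing the $2n-2$ labels between the two subtrees — but once the recursion is in hand everything reduces to the already-established bijection and a standard identity for $\tan$, so no genuine obstacle remains.
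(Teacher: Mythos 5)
Your proof is correct and takes essentially the same route as the paper: the paper simply invokes the bijection $\Phi$ (reducing $\#\mathfrak T_{0,n}^+$ to the count of increasing plane binary trees $\#\mathfrak B_{0,n}^+$) and then states the classical identification with tangent numbers without further detail, whereas you supply the standard root-removal recursion, the resulting ODE $y'=1+y^2$, and the Bernoulli-number closed form. Your bookkeeping in the convolution (ordered subtrees, $\binom{2n-2}{2j-1}$ label distribution, root forced to be $1$) is exactly right, so the argument goes through without gaps.
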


\section{Local limits}
\label{sec:local}

Let $\mathfrak T$ be the set of all rooted, vertex-labelled, edge-labelled, and locally finite (meaning that every vertex has finite degree) trees. For every $ {t} \in \mathfrak T$ {and} $ h \in \N$ let $\llbracket {t} \rrbracket_{h}$ be
the finite rooted vertex-labelled edge-labelled tree
obtained from ${t}$ by
 keeping only the vertices at distance at most $h$ from the root vertex
 together with the edges between them, and their labels.
It is standard to construct a metric  $\dloc$ on $\mathfrak T$ such that the space $(\mathfrak T, \dloc)$ is Polish (i.e. separable and complete) and
${t}_{n} \rightarrow {t}$ for $\dloc$ if and only if for every $h \ge 1$, we have $\br{{t}_{n}}_h=\br{{t}}_h$  {(as rooted, vertex-labelled, edge-labelled trees)} for $n$ large enough. Similar metrics can be defined for other families of trees 
(plane and unlabelled, plane and vertex-labelled and edge-labelled) and will be also denoted by $\dloc$. {We refer to the associated topologies as ``the local topology''.}

\subsection{Proof of Theorem \ref{thm:cvlocale} } \label{proof:thm:cvlocale}

We begin by showing that conditions  {(II)} and  {(III)}  are equivalent. Since the sequence of trees $(\T_n(\Xb))_{n\geq 0}$ is increasing, it converges almost surely for $\dloc$, if and only if, almost surely, the degree of each vertex converges (to a finite value).

First, we show that every vertex degree converges a.s.~if and only if every vertex freezes a.s.~at some finite time. {Once a vertex is frozen its degree does not change, so if every vertex freezes a.s.~at some finite time then a.s.~every vertex degree converges. For the other implication, we show that if with positive probability a vertex does not eventually freeze, then with positive probability its degree does not converge.}   Set $\mathsf{P} = \{n \geq 1 : \X_n = +1\}$ and $\mathsf{M} = \{n \geq 1 : \X_n = -1\}$. Observe that 
\begin{equation}
\label{eq:sumP}\sum_{n \in \mathsf{P}}  \frac{1}{S_{n}(\Xb)} =+\infty.
\end{equation}
Indeed, since the walk $(S_n(\Xb))_{n \geq 0}$ never touches $0$ {we have} $n^{-1}\#(\mathsf P \cap \{1,\dots,n\}) {\geq 1/2}$ {for all $n \geq1$}. Thus $\sum_{n \in \mathsf{P}} 1/S_n(\Xb) \geq \sum_{n \in \mathsf{P}} 1/(n+1)$ diverges.

Now fix $n_{0} \in \mathsf{P}$ and {denote by $v$ the vertex} that appears at time $n_{0}$. Let ${E}$ be the event ``$v$ never freezes''. We show that $\P({E})>0$ implies that with positive probability the degree of $v$ does not converge. To this end, for $n \in \mathsf{P}$, $n \geq n_{0}$, let $E_{n}$ denote the event  ``the degree of $v$ increases by one at time $n$''.
Then, since the event $E_{n}$ involves times in $\mathsf{P}$ while the event ${E}$ involves times in $\mathsf{M}$, conditionally given ${E}$ the events $(E_{n})_{n \geq 1}$ are independent and $\P(E_{n}|{E})= 1/S_{n}(\Xb)$. Thus, by \eqref{eq:sumP} and   Borel-Cantelli,  $\P(\textrm{the degree of } v  \textrm{ diverges as }n \rightarrow \infty |{E})=1$. Thus with positive probability the degree of $v$ does not converge.

Now, to show that {(II)} and {(III)} are equivalent, it remains to show that, almost surely, the fact that all the vertices freeze at some point is equivalent to the divergence of the sum $\sum_{n \in \mathsf{M}} 1/S_n(\Xb)$. Fix a vertex $v$ appearing at time $n_{0} \geq 0$. The probability that it never freezes is $\prod_{n \in \mathsf{M}, n > n_{0}} (1-1/S_n(\Xb))$ which converges towards $0$ if and only if $\sum_{n \in \mathsf{M}} 1/S_n(\Xb) = +\infty$. 

{Now we prove that  {(II)} and {(III)} imply  {(I)}}. Let $t$ be a finite tree with no vertex labelled $a$ (so all {vertices are} frozen). If $m$ is the biggest label of $t$ then for all $h$ and $n$ the event ``$\br{\T_n(\Xb)}_h = t$'' depends only on the first $m$ steps of $\Xb$. Therefore for all $n$ large enough $\proba{\br{\T_n(\Xb)}_h = t} = \proba{\br{\T_n(\Xb^n)}_h = t}$.
If $\T_n(\Xb)$ converges locally to $\T_\infty(\Xb)$, then, by {(III)}, all the vertices of $\T_\infty(\Xb)$ are frozen so we deduce that $\T_n(\Xb^n)$ also converges locally {in distribution} to $\T_\infty(\Xb)$.

{Finally we show that {(I)} implies  {(II)}.} Assume {(I)} and suppose that $\T_n(\Xb)$ does not converge locally {almost surely}. Then there exists $i$ such that $\X_i=+1$ and the vertex $v_i$ added at time $i$ in $\T_n(\Xb)$ has a degree converging towards $+\infty$ with positive probability. In other words
\[
\proba{\deg(v_i,\T_n(\Xb)) \rightarrow +\infty} = \lim_{m\to\infty} \proba{\bigcup_{n\geq i} \{\deg(v_i,\T_n(\Xb))\geq m \}} >0.
\]
Let $\varphi$ be an increasing function such that for all $k \leq n$, $\Xb_k^{\varphi(n)} = \Xb_k$. There is an obvious coupling such that $\T_n(\Xb) = \T_n(\Xb^{\varphi(n)})$ for all $n$. Therefore, for all $m$,
\[
\bigcup_{n\geq i} \{\deg(v_i,\T_n(\Xb))\geq m \} = \bigcup_{n\geq i} \{\deg(v_i,\T_n(\Xb^{\varphi(n)}))\geq m \},
\]
and
\[
\lim_{m\to\infty} \proba{\bigcup_{n\geq i} \{\deg(v_i,\T_{\varphi(n)}(\Xb^{\varphi(n)}))\geq m \}} \geq \lim_{m\to\infty} \proba{\bigcup_{n\geq i} \{\deg(v_i,\T_n(\Xb^{\varphi(n)}))\geq m \}} >0.
\]
We deduce that {under this coupling} the degree of $v_i$ in $\T_{\varphi(n)}(\Xb^{\varphi(n)})$ tends to $\infty$ with positive probability so $\T_n(\Xb^n)$ can't converge locally.\qed

\subsection{A connection with Geometric Bienaym\'e  trees: Proof of Theorem \ref{thm:BGW}}
\label{sous-section lien BGW}
\label{sec:BGW}

For the proof of Theorem \ref{thm:BGW}, we need the {Ulam--Harris} formalism of plane trees. Let $ \mathcal{U}$ be the set of labels defined by 
\[ \mathcal{U}=\bigcup_{n=0}^{\infty} \N^n,\]
where, by convention, $(\N)^0=\{\varnothing\}$. In other words, an element of $ \mathcal{U}$  is a (possibly empty) sequence $u=u_1 \cdots u_j$ of positive integers. When  $u=u_1
\cdots u_j$ and $v=v_1 \cdots v_k$ are elements of $ \mathcal{U}$, we let  $uv=u_1
\cdots u_j v_1 \cdots v_k$ be the concatenation of $u$ and $v$. In particular,  $u \varnothing=\varnothing u = u$. Finally, a {(Ulam--Harris)} \emph{plane tree} is a  subset of $ \mathcal{U}$ satisfying the following three conditions: 
\begin{enumerate}
\item[(i)] $\varnothing \in \tau$,
\item[(ii)] if $v \in \tau$ and $v=uj$ for {some} $j \in \N$, then $u
\in \tau$,
\item[(iii)] for every  $u \in \tau$, there exists an integer $k_u(\tau) \geq 0$ such that for every $j \in \N$, $uj \in \tau$ if and only if $1 \leq j \leq k_u(\tau)$.
\end{enumerate}
If $\tau$ is a plane tree and $u \in \tau$ let $\theta_{u} \tau = \{v \in \mathcal{U}: uv \in \tau\}$ be the subtree of descendants of $u$. We finally let $\mathbb{T}$ be the set of all plane trees (observe that such trees may be infinite, but are always locally finite).

Given a probability distribution $\mu$ on $\mathbb{Z}_{+}$, the law $\P_{\mu}$ of a Bienaym\'e tree with offspring distribution $\mu$ (sometimes known as a Galton--Watson tree, but we prefer to use the terminology suggested in \cite{ABHK21}) is a probability measure on  $\mathbb{T}$ that can be characterized by the following two properties (see e.g.~\cite{Nev86} for more general statements):
\begin{itemize}
\item[(i)] $\P_{\mu}(k_{\varnothing}=j)=\mu(j)$ for $j \in \Z_{+}$,
\item[(ii)] for every $j \geq 1$ with $\mu(j)>0$, the shifted trees $\theta_{1}  \tau, \ldots, \theta_{j} \tau$ are independent 
under the conditional probability  $\P_{\mu}({\, \cdot \,} | k_{\varnothing}=j)$ and their conditional
distribution is $\P_{\mu}$.
\end{itemize}

The following simple result gives a characterization of Bienaymé trees with geometric offspring distribution in terms of an invariance property involving subtrees (since we have not managed to find a reference in the literature we include the {short} proof for completeness).

\begin{lemma}
\label{lem:BGW}
Let $p \in [0,1)$. Let $ \mathcal{T}$ be a random plane tree such that $\proba{k_{\varnothing}(\mathcal{T})=0 }=1-p$ and under the conditional probability $\P(\, \cdot \, | k_{\varnothing}(\mathcal{T})>0)$ the two trees  $\theta_{1} \mathcal{T}$ and $ \mathcal{T} \backslash \theta_{1} \mathcal{T}$ are independent with conditional distribution equal to the law of $ \mathcal{T}$. Then $ \mathcal{T}$  is a Bienaym\'e tree with offspring distribution $\mu$ given by $\mu(k)= (1-p) p ^{k}$ for $k \geq 0$.  
\end{lemma}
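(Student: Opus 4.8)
The plan is to show that $\mathcal{T}$ and a tree with law $\mathbb{P}_\mu$ assign the same probability to every cylinder event $\{\br{\tau}_h = t\}$, where $\br{\tau}_h$ denotes the truncation of a plane tree $\tau$ at height $h$ (defined exactly as in Section~\ref{sec:local}) and $t$ ranges over finite plane trees; since these events generate the Borel $\sigma$-field on $\mathbb{T}$ for the local topology, this identifies the two laws. Writing $q_h(t) := \mathbb{P}(\br{\mathcal{T}}_h = t)$ and $q_h^\mu(t) := \mathbb{P}_\mu(\br{\tau}_h = t)$, the strategy is to exhibit a single recursion, with common boundary values, satisfied by both families.

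The first step is the elementary combinatorial observation that, for $h \geq 1$ and a finite plane tree $t$ with $k_\varnothing(t) = j \geq 1$, the event $\{\br{\mathcal{T}}_h = t\}$ coincides with $\{\br{\theta_1\mathcal{T}}_{h-1} = \theta_1 t\} \cap \{\br{\mathcal{T}\setminus\theta_1\mathcal{T}}_h = t\setminus\theta_1 t\}$: deleting from $\br{\mathcal{T}}_h$ the first child of the root together with all its descendants produces exactly $\br{\mathcal{T}\setminus\theta_1\mathcal{T}}_h$ (the heights of the remaining vertices are unchanged in $\mathcal{T}\setminus\theta_1\mathcal{T}$, only siblings being relabelled), while the removed part is recorded by $\br{\theta_1\mathcal{T}}_{h-1}$ (heights drop by one in $\theta_1\mathcal{T}$). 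Conditioning on $\{k_\varnothing(\mathcal{T})\geq 1\}$, an event of probability $p$, and invoking the hypothesis — on this event $\theta_1\mathcal{T}$ and $\mathcal{T}\setminus\theta_1\mathcal{T}$ are independent and each distributed as $\mathcal{T}$ — gives
\[
q_h(t) \;=\; p\, q_{h-1}(\theta_1 t)\, q_h(t\setminus\theta_1 t) \qquad (h\geq 1,\ k_\varnothing(t)\geq 1),
\]
with boundary values $q_0(t) = 1$ if $t=\{\varnothing\}$ and $q_0(t)=0$ otherwise, and $q_h(\{\varnothing\}) = \mathbb{P}(k_\varnothing(\mathcal{T})=0) = 1-p$ for $h\geq 1$. (If $p=0$ then $\mathcal{T}=\{\varnothing\}$ almost surely and the statement is trivial, so I may assume $p\in(0,1)$.)

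Next I would verify that $q_h^\mu$ satisfies the same recursion and boundary values. From the branching property of $\mathbb{P}_\mu$ one has $q_h^\mu(t) = \mu(j)\prod_{i=1}^{j} q_{h-1}^\mu(\theta_i t)$ for $h\geq 1$, $k_\varnothing(t)=j\geq 1$; since $\theta_i(t\setminus\theta_1 t) = \theta_{i+1}t$ for $1\leq i\leq j-1$, this yields $p\, q_{h-1}^\mu(\theta_1 t)\, q_h^\mu(t\setminus\theta_1 t) = p\,\mu(j-1)\prod_{i=1}^{j} q_{h-1}^\mu(\theta_i t)$, which equals $q_h^\mu(t)$ precisely because $p\,\mu(j-1) = \mu(j)$ for the geometric weights $\mu(k) = (1-p)p^k$ — the only place where the specific form of $\mu$ intervenes. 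A double induction, first on $h$ and then, for fixed $h$, on $j = k_\varnothing(t)$ — valid because $\theta_1 t$ has height at most $h-1$ and $t\setminus\theta_1 t$ has one fewer child at the root — then yields $q_h = q_h^\mu$ for all $h\geq 0$ and all finite plane trees $t$, and hence $\mathcal{T}$ has law $\mathbb{P}_\mu$.

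The only genuinely delicate point is the combinatorial identity of the second step: correctly matching $\br{\mathcal{T}}_h$ with the pair $\big(\br{\theta_1\mathcal{T}}_{h-1},\,\br{\mathcal{T}\setminus\theta_1\mathcal{T}}_h\big)$ and keeping the height bookkeeping straight; after that, the independence hypothesis and the induction are routine. An alternative, slightly more classical route would be to prove, by induction on $j$, that conditionally on $\{k_\varnothing(\mathcal{T})=j\}$ the subtrees $\theta_1\mathcal{T},\dots,\theta_j\mathcal{T}$ are i.i.d.\ copies of $\mathcal{T}$ (peel off $\theta_1\mathcal{T}$ and apply the hypothesis to $\mathcal{T}\setminus\theta_1\mathcal{T}$, which has the law of $\mathcal{T}$), together with the computation $\mathbb{P}(k_\varnothing(\mathcal{T})=k) = (1-p)p^k$ by a similar induction; these are exactly the two defining properties of $\mathbb{P}_\mu$. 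I would probably present the first route, which sidesteps any separate discussion of the conditional law of $\mathcal{T}$ given $k_\varnothing(\mathcal{T})$.
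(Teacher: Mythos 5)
Your proof is correct, but it follows a genuinely different route from the paper's. The paper proves exactly what you sketch at the end as the ``alternative, slightly more classical route'': it first computes $\P(k_\varnothing(\mathcal{T})\ge j)=p\,\P(k_\varnothing(\mathcal{T})\ge j-1)$ to identify the root-degree law as geometric, and then shows by an induction on $n$ — carefully manipulating conditional expectations — that under $\P(\cdot\mid k_\varnothing(\mathcal{T})=n)$ the subtrees $\theta_1\mathcal{T},\ldots,\theta_n\mathcal{T}$ are i.i.d.\ with the law of $\mathcal{T}$. These are Neveu's two characterizing properties of $\P_\mu$, as recalled in Section~\ref{sous-section lien BGW}, and the proof stops there. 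Your primary route instead works directly with the unconditional probabilities $q_h(t)=\P(\br{\mathcal{T}}_h = t)$ of cylinder events and establishes a one-step peeling recursion $q_h(t)=p\,q_{h-1}(\theta_1 t)\,q_h(t\setminus\theta_1 t)$ that, together with its boundary values, determines all the $q_h(t)$ by a double induction on $(h,k_\varnothing(t))$; the fact that $\P_\mu$ satisfies the same recursion is exactly the identity $p\,\mu(j-1)=\mu(j)$. What the paper's approach buys is conceptual clarity — it exhibits the full conditional i.i.d.\ structure of the subtrees given the root degree, matching the textbook definition verbatim. What yours buys is that it avoids ever discussing the law of $\mathcal{T}$ conditioned on $\{k_\varnothing(\mathcal{T})=n\}$ and replaces the somewhat delicate conditional-expectation bookkeeping by a purely combinatorial recursion on explicit cylinder events, at the minor cost of invoking the fact that these events form a $\pi$-system generating the Borel $\sigma$-field on $\mathbb{T}$. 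Both are complete proofs; yours is a valid substitute.
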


\begin{proof}
First, for $j \geq 1$ we have $\proba{k_{\varnothing}(\mathcal{T}) \geq j}= \proba{k_{\varnothing}(\mathcal{T})>0} \proba{ k_{\varnothing}( \mathcal{T} \backslash \theta_{1} \mathcal{T}) \geq j-1 | k_{\varnothing}(\mathcal{T})>0}$, which implies $\proba{k_{\varnothing}(\mathcal{T}) \geq j}= p \proba{k_{\varnothing}(\mathcal{T}) \geq j-1}$.  Thus $\proba{k_{\varnothing}(\mathcal{T}) = k}=(1-p) p ^{k}$ for $k \geq 0$. 

Second, we argue by induction. Take $n \geq 1$ and assume that  the shifted trees $\theta_{1}  \mathcal{T}, \ldots, \theta_{n} \mathcal{T}$ are independent  under the conditional probability $\P( \, \cdot \, | k_{\varnothing}(\mathcal{T})=n)$  with conditional distribution equal to the law of $ \mathcal{T}$. Then for $f_{1},\ldots,f_{n+1} \geq 0$ measurable
\[
\esp{f_{1}(\theta_{1}  \mathcal{T}) \cdots f_{n+1}(\theta_{n+1} \mathcal{T})   | k_{\varnothing}(\mathcal{T})=n+1}= \esp{f_{1}(\theta_{1}  \mathcal{T}) g_{n} ( \mathcal{T} \backslash \theta_{1} \mathcal{T})   | k_{\varnothing}(\mathcal{T})>0}\]
with $g_{n}(\tau)=\mathbbm{1}_{k_{\varnothing}(\tau)=n} f_{2}(\theta_{1} \tau) \cdots f_{n+1}(\theta_{n} \tau) \proba{k_{\varnothing}(\mathcal{T})>0}/ \proba{k_{\varnothing}(\mathcal{T})=n+1}$. Thus, by assumption, we have
$\esp{f_{1}(\theta_{1}  \mathcal{T}) \cdots f_{n+1}(\theta_{n+1} \mathcal{T})   | k_{\varnothing}(\mathcal{T})=n+1}=\esp{f_{1}(\mathcal{T})} \esp{g_{n}(\mathcal{T})}$. But
\[ \esp{g_{n}(\mathcal{T})}=\frac{\proba{k_{\varnothing}(\mathcal{T})=n} \proba{k_{\varnothing}(\mathcal{T})>0}}{\proba{k_{\varnothing}(\mathcal{T})=n+1}} \esp{f_{2}(\theta_{1}  \mathcal{T}) \cdots f_{n+1}(\theta_{n} \mathcal{T})   | k_{\varnothing}(\mathcal{T})=n},
\]
which is equal to $\esp{f_{2}(\theta_{1}  \mathcal{T}) \cdots f_{n+1}(\theta_{n} \mathcal{T})   | k_{\varnothing}(\mathcal{T})=n}$ thanks to the first step. The desired result follows from the induction hypothesis.
\end{proof}

For every tree $\tau \in \mathfrak T_{k,n}^+(A)$, we define $\tilde \tau$ to be the unique element of $\mathfrak T_{k,n}^+$ obtained by shifting the labels of $\tau$ {to make them consecutive}; specifically if $f : \{1,\dots,2n-k-1\} \rightarrow A$ denotes the unique increasing bijection, an edge/vertex is labeled $f(i)$ in $\tau$ if and only if its corresponding edge/vertex is labeled $i$ in $\tilde \tau$ (see Figure \ref{fig preuve thm:BGW}).

We are now ready to establish Theorem \ref{thm:BGW}.

\begin{proof}[Proof of Theorem \ref{thm:BGW}]
First, we claim that almost surely the sequence $(\T_n(\XXb))_{n\geq 0}$ converges locally towards a tree denoted by $\T_\infty$.  {Observe that if $(S_n)_{n\geq 0}$ touches $0$, then the sequence of trees $(\T_n(\XXb))_{n\geq 0}$ is almost surely eventually constant and thus converges.  If $p \leq 1/2$, $(S_n)_{n\geq 0}$ touches $0$ almost surely at some point, so the sequence of trees $(\T_n(\XXb))_{n\geq 0}$ converges almost surely.} If $p > 1/2$, conditionally on the fact that $(S_n)_{n\geq 0}$ never touches $0$, this walk satisfies the condition of Theorem \ref{thm:cvlocale}, so $(\T_n(\XXb))_{n\geq 0}$ has a local limit. We conclude that, for every value of $p \in [0,1)$, almost surely the sequence $(\T_n(\XXb))_{n\geq 0}$ converges locally towards a tree $\T_\infty$.

We shall first consider $\T_\infty$ conditioned on having at least $2$ vertices (which amounts to {conditioning} on the event $\{\XX_1=+1\}$). Conditionally given $\{\XX_1=+1\}$, $\T_n(\XXb)$ is composed of two trees: the tree $\T_n^1(\XXb)$ representing all the descendants of the first child of the root of $\T_n(\XXb)$, and the tree $\T_n^2(\XXb)$ representing all the other vertices, including the root of $\T_n(\XXb)$ (see Figure \ref{fig preuve thm:BGW}). 

The main  idea is then to check that if one of the trees $\T_n^1(\XXb)$ or $\T_n^2(\XXb)$ has at least one active vertex, then it will eventually almost surely evolve {(i.e. its degree changes or it freezes)}. This will show that the two trees $\tilde \T_n^1(\XXb)$ or $\tilde \T_n^2(\XXb)$ evolve independently with the same transition probabilities, which in turn will entail that conditionally given $\{\XX_1=1\}$, $(\tilde \T_n^1(\XXb))_{n\geq 1}$ and $(\tilde \T_n^2(\XXb))_{n\geq 1}$ converge locally, jointly, towards two independent trees distributed like $\T_\infty$.  

Let us now make this discussion more formal. Notice that $(\T_n(\XXb))_{n\geq 0}$ is a Markov chain in the state space $E \coloneqq \bigcup_{n \geq 1}\bigcup_{0\leq k \leq n} \mathfrak T_{{k,n}}^+$ starting at the trivial tree, denoted by $t_\circ$, composed of a single active vertex. Denote by $\alpha$ its transition matrix. The absorbing states are exactly the trees {in $\bigcup_{n \geq 1} \mathfrak T_{{0,n}}^+$} with no active vertices. For $n\geq 2$, let $I_n$ be the random variable which is equal to $1$ when $S_k=0$ for some $k \leq n-1$ or when the $n$-th action (corresponding to the step $\XX_{n}$) occurs in the tree $\T_{n-1}^1(\XXb)$ and $2$ if it occurs in the tree $\T_{n-1}^2(\XXb)$.  {For example, when} $\XX_{n}=-1$ and $S_k>0$ for all $k \leq n-1$, an active vertex of $\T_{n-1}(\XXb)$, chosen uniformly at random, will freeze. If this vertex is chosen in $\T_{n-1}^1(\XXb)$, then $I_n = 1$, otherwise, it is chosen in $\T_{n-1}^2(\XXb)$ and $I_n = 2$. By convention we also set $I_1 = 1$. Denote by $a(t)$ the number of active vertices of a tree $t \in E$. 

The previous discussion shows that, conditionally given the event $\{\XX_1 = 1\}$, $\left( \tilde \T_n^1(\XXb), \tilde \T_n^2(\XXb), I_n \right)_{n\geq 1}$ is a Markov chain in the state space $E^2\times\{1,2\}$, starting at $(t_\circ,t_\circ,1)$, with transitions given by 
\begin{align*}
(x,y,i) \rightarrow (x',y,1) \qquad & \text{with probability } ~~k(x,y)\alpha(x,x') \\ 
(x,y,i) \rightarrow (x,y',2) \qquad & \text{with probability } ~~(1-k(x,y))\alpha(y,y').
\end{align*}
for every $x,x',y,y' \in E$ and $i \in \{1,2\}$ where 
\[
k(x,y) \coloneqq \frac{a(x)}{a(x)+a(y)} ~~ \text{ if }a(x)+a(y)>0 \qquad \text{ and } \qquad k(x,y)\coloneqq1 ~\text{ otherwise}.
\]

Let $x \in E$ be a non-absorbing state (i.e.~$a(x)>0$). Let $y \in E$ and $n \geq 1$. We show that conditionally given $\{\tilde \T_n^1(\XXb) = x \text{ and }\tilde \T_n^2(\XXb) = y\}$ the probability that $I_k = 1$ for some $k>n$ is $1$ {and that the probability that $I_k = 2$ for some $k>n$ is $1$}. The forthcoming Lemma  \ref{lemme chaines de markov altern\'ees}  then entails that conditionally given $\{\XX_1=1\}$, the trees $(\tilde \T_n^1(\XXb))_{n\geq 1}$ and $(\tilde \T_n^2(\XXb))_{n\geq 1}$ converge locally, jointly, towards two independent trees distributed like $\T_\infty$.

Now, {we show that that conditionally given $\{\tilde \T_n^1(\XXb) = x \text{ and }\tilde \T_n^2(\XXb) = y\}$ the probability that $I_k = 1$ for some $k>n$ is $1$ (for $I_{k}=2$ the argument is the same)}. Observe that since $a(x)>0$, we have $S_{k}>0$ for every $1 \leq k \leq n$. If $S_k = 0$ for some $k>n$, this implies that both $\T_k^1(\XXb)$ and $\T_k^2(\XXb)$ have no active vertices left. So in particular, there must be an integer $n < \ell \leq k$ such that $I_\ell =1$. Conditionally given $(S_{k})_{k \geq n}$, if $S_k > 0$ for all $k>n$, then the probability that $I_k = 2$ for all $k > n$ is
\[
\prod_{k\geq n} \frac{S_k - a(x)}{S_k} \leq \prod_{k \geq n} \left( 1 - \frac{1}{S_k}\right) \leq \prod_{k \geq n} \left( 1 - \frac{1}{k+1}\right) = 0.
\]
Therefore we can apply the forthcoming Lemma \ref{lemme chaines de markov altern\'ees} and deduce that, conditionally given $\{\XX_1=1\}$, the trees $(\tilde \T_n^1(\XXb))_{n\geq 1}$ and $(\tilde \T_n^2(\XXb))_{n\geq 1}$ converge locally, jointly, towards two independent trees distributed like $\T_\infty$.

To conclude, for a {vertex and edge} labelled locally finite tree $\tau$ denote by  $\overline \tau$ to be the plane tree obtained from $\tau$ by first ordering the vertices in such a way that the labels of edges connecting vertices to their children are increasing from left to right, and second erasing all the labels (for example the vertices of the tree on the left of Fig.~\ref{fig exemple d'image par b} are ordered in such a way).
By continuity of the operation $\tau \mapsto \overline{\tau}$ on the set of labelled locally finite trees with respect to the local topology, we conclude that conditionally on the fact that $\overline \T_\infty$ is not a single vertex, the two trees $\overline \T_\infty^1$ and $\overline \T_\infty^2$ are independent copies of $\overline \T_\infty$.   Since $\overline{\T}_\infty$  has only one vertex with probability $1-p$ (this happens if and only if $\XX_{1}=-1$), the desired result follows from Lemma \ref{lem:BGW}.
\end{proof}

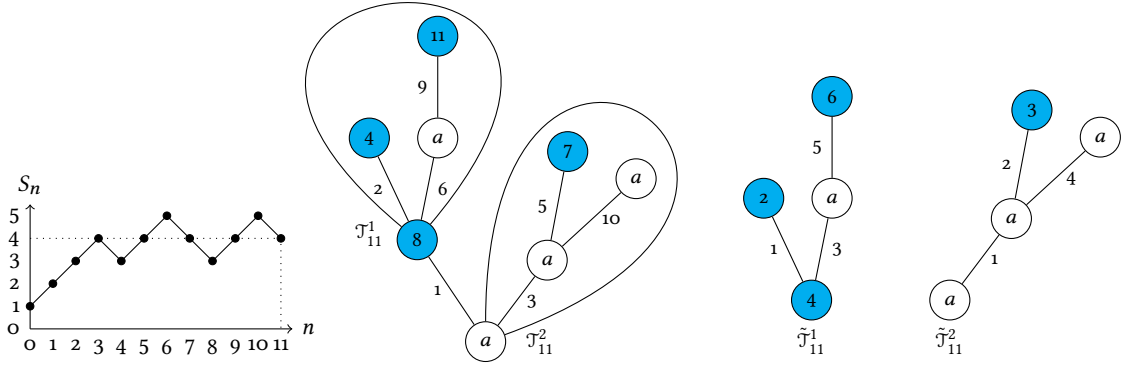
\begin{figure}
\begin{center}
\begin{tikzpicture}[scale = 0.3]
\draw[->] (0,0) -- (0,5.5);
\draw[->] (0,0) -- (11.5,0);

\node[above,font=\footnotesize] at (0,5.5) {$S_n$};
\node[right,font=\footnotesize] at (11.5,0) {$n$};

\node[draw,circle,fill,inner sep = 1pt] at (0,1) {};
\node[draw,circle,fill,inner sep = 1pt] at (1,2) {};
\node[draw,circle,fill,inner sep = 1pt] at (2,3) {};
\node[draw,circle,fill,inner sep = 1pt] at (3,4) {};
\node[draw,circle,fill,inner sep = 1pt] at (4,3) {};
\node[draw,circle,fill,inner sep = 1pt] at (5,4) {};
\node[draw,circle,fill,inner sep = 1pt] at (6,5) {};
\node[draw,circle,fill,inner sep = 1pt] at (7,4) {};
\node[draw,circle,fill,inner sep = 1pt] at (8,3) {};
\node[draw,circle,fill,inner sep = 1pt] at (9,4) {};
\node[draw,circle,fill,inner sep = 1pt] at (10,5) {};
\node[draw,circle,fill,inner sep = 1pt] at (11,4) {};

\draw (0,1)--(1,2)--(2,3)--(3,4)--(4,3)--(5,4)--(6,5)--(7,4)--(8,3)--(9,4)--(10,5)--(11,4);

\node[left,font=\footnotesize] at (0,0) {0};
\node[left,font=\footnotesize] at (0,1) {1};
\node[left,font=\footnotesize] at (0,2) {2};
\node[left,font=\footnotesize] at (0,3) {3};
\node[left,font=\footnotesize] at (0,4) {4};
\node[left,font=\footnotesize] at (0,5) {5};

\node[below,font=\footnotesize] at (0,0) {0};
\node[below,font=\footnotesize] at (1,0) {1};
\node[below,font=\footnotesize] at (2,0) {2};
\node[below,font=\footnotesize] at (3,0) {3};
\node[below,font=\footnotesize] at (4,0) {4};
\node[below,font=\footnotesize] at (5,0) {5};
\node[below,font=\footnotesize] at (6,0) {6};
\node[below,font=\footnotesize] at (7,0) {7};
\node[below,font=\footnotesize] at (8,0) {8};
\node[below,font=\footnotesize] at (9,0) {9};
\node[below,font=\footnotesize] at (10,0) {10};
\node[below,font=\footnotesize] at (11,0) {11};

\draw[dotted] (0,4)--(11,4)--(11,0);
\end{tikzpicture}
\hspace{-9em}
\begin{tikzpicture}[scale = 0.9,
sommet/.style = {draw,circle, font=\scriptsize,inner sep=0,minimum size=15pt},
gele/.style = {fill=cyan},
etiquete/.style = {font = \scriptsize}]

\node[sommet] (0) at (0,0) {$a$};
\node[sommet,gele] (1) at (-1,1.5) {8};
\node[sommet] (2) at (0.9,1.2) {$a$};
\node[sommet,gele] (3) at (-1.7,3) {4};
\node[sommet] (4) at (-0.7,3) {$a$};
\node[sommet,gele] (5) at (1.2,2.8) {7};
\node[sommet] (6) at (2.2,2.4) {$a$};
\node[sommet,gele] (7) at (-0.7,4.5) {11};

\draw (0) --node[left,etiquete] {1} (1);
\draw (0) --node[right,etiquete] {3} (2);
\draw (1) --node[left,etiquete] {2} (3);
\draw (1) --node[right,etiquete] {6} (4);
\draw (2) --node[left,etiquete] {5} (5);
\draw (2) --node[right,etiquete] {10} (6);
\draw (4) --node[left,etiquete] {9} (7);

\draw (1) to[out=140,in=50,distance=6.2cm] (1);
\draw (0) to[out=90,in=20,distance=6.2cm] (0);

\node[etiquete] (t1) at (-1.7,1.6) {$\T_{11}^1$};
\node[etiquete] (t2) at (0.8,0) {$\T_{11}^2$};
\end{tikzpicture}
\hspace{-6em}
\begin{tikzpicture}[scale = 0.9,
sommet/.style = {draw,circle, font=\scriptsize,inner sep=0,minimum size=15pt},
gele/.style = {fill=cyan},
etiquete/.style = {font = \scriptsize}]
\node[sommet,gele] (1) at (0,0.6) {4};
\node[sommet,gele] (3) at (-0.7,2.1) {2};
\node[sommet] (4) at (0.3,2.1) {$a$};
\node[sommet,gele] (7) at (0.3,3.6) {6};

\draw (1) --node[left,etiquete] {1} (3);
\draw (1) --node[right,etiquete] {3} (4);
\draw (4) --node[left,etiquete] {5} (7);

\node[etiquete] (t1) at (0,0) {$\tilde \T_{11}^1$};
\end{tikzpicture}
\hspace{2em}
\begin{tikzpicture}[scale = 0.9,
sommet/.style = {draw,circle, font=\scriptsize,inner sep=0,minimum size=15pt},
gele/.style = {fill=cyan},
etiquete/.style = {font = \scriptsize}]
\node[sommet] (0) at (0,0.6) {$a$};
\node[sommet] (2) at (0.9,1.8) {$a$};
\node[sommet,gele] (5) at (1.2,3.4) {3};
\node[sommet] (6) at (2.2,3) {$a$};

\draw (0) --node[right,etiquete] {1} (2);
\draw (2) --node[left,etiquete] {2} (5);
\draw (2) --node[right,etiquete] {4} (6);

\node[etiquete] (t2) at (0,0) {$\tilde \T_{11}^2$};
\end{tikzpicture}
\caption{Illustration of the proof of Theorem \ref{thm:BGW}}
\label{fig preuve thm:BGW}
\end{center}
\end{figure}

\begin{lemma}
\label{lemme chaines de markov altern\'ees}
Let $(X_n)_{n\geq 0}$ and $(Y_{n})_{n \geq 0}$ be two Markov chains on the countable state space $E$ with the {same} transition matrix $\alpha$. Denote by $A { := \{x\in E : \alpha(x,x)=1\}}$ their set of absorbing states. Let $k : E\times F \rightarrow [0,1]$. {Define} a Markov chain $(U_n,V_n,I_n)_{n\geq 0}$ on the state space $E^{2} \times \{1,2\}$ with transition matrix given by, for all $x,x',y,y'  \in E$ and $i \in \{1,2\}$:
\begin{align*}
(x,y,i) \rightarrow (x',y,1) \qquad & \text{with probability } ~~k(x,y)\alpha(x,x') \\ 
(x,y,i) \rightarrow (x,y',2) \qquad & \text{with probability } ~~(1-k(x,y))\alpha(y,y').
\end{align*}
Suppose that, for all $(x,y,i) \in (E\setminus A) \times E \times \{1,2\}$, $\P_{(x,y,i)}(\exists n\geq 1, \, I_n=1)=1$. Similarly, suppose that for all $(x,y,i) \in E \times (E\setminus A) \times \{1,2\}$, $\P_{(x,y,i)}(\exists n\geq 1, \, I_n=2)=1$. Let $n_0=m_0=0$. For all $k \geq 1$, if the set $\{n : n > n_{k-1} \text{ and } I_n=1\}$ is non-empty, we define $n_k$ as its minimum, otherwise we set $n_k = n_{k-1}$. Similarly, if the set $\{m : m > m_{k-1} \text{ and } I_m=2\}$ is non-empty, we define $m_k$ as its minimum, otherwise we set $m_k = m_{k-1}$. Then, for every $(x,y) \in E^{2}$, $\left( (U_{n_k})_{k\geq 0},(V_{m_k})_{k\geq 0} \right)$ under $\P_{(x,y,1)}$ has the same law as $\left( (X_k)_{k\geq 0}, (Y_k)_{k\geq 0} \right)$ under $\P_x \otimes \P_y$.
\end{lemma}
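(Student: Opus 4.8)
The plan is to realise the whole triple $(U_n,V_n,I_n)_{n\ge0}$ on a probability space carrying three independent ingredients: a Markov chain $(U^*_k)_{k\ge0}$ with transition matrix $\alpha$ started from $x$; an independent Markov chain $(V^*_k)_{k\ge0}$ with transition matrix $\alpha$ started from $y$; and a sequence $(\theta_n)_{n\ge1}$ of i.i.d.\ uniform variables on $[0,1]$, independent of both chains. One builds $(U_n,V_n,I_n)$ together with counters $p_n,q_n$ recording how many times each coordinate has moved, by the recursion: $U_0=U^*_0$, $V_0=V^*_0$, $I_0=1$, $p_0=q_0=0$; and, given everything up to time $n$, if $\theta_{n+1}\le k(U_n,V_n)$ one sets $p_{n+1}=p_n+1$, $q_{n+1}=q_n$, $U_{n+1}=U^*_{p_{n+1}}$, $V_{n+1}=V_n$, $I_{n+1}=1$, while otherwise one sets $p_{n+1}=p_n$, $q_{n+1}=q_n+1$, $U_{n+1}=U_n$, $V_{n+1}=V^*_{q_{n+1}}$, $I_{n+1}=2$. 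A routine verification --- the standard fact that reading off fresh increments of the i.i.d.-driven chains $U^*$ and $V^*$ at the successive move-times of each coordinate reproduces the transition matrix $\alpha$ for that coordinate, while the choice of which coordinate moves depends only on $(U_n,V_n)$ through the test $\theta_{n+1}\le k(U_n,V_n)$ --- shows that $(U_n,V_n,I_n)_{n\ge0}$ built this way has law $\P_{(x,y,1)}$. Note that $U_n=U^*_{p_n}$ and $V_n=V^*_{q_n}$ for every $n$ by construction.

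Next set $\sigma_0=0$ and, for $j\ge1$, $\sigma_j=\inf\{n>\sigma_{j-1}:I_n=1\}$ with the convention $\inf\varnothing=\infty$; these are stopping times, and, writing $J$ for the total number of $n\ge1$ with $I_n=1$ (possibly $J=\infty$), one has $n_k=\sigma_{k\wedge J}$ for all $k$. Since the step performed at time $\sigma_j$ (for $j\le J$) is the $j$-th move of the first coordinate, $p_{\sigma_j}=j$, hence $U_{n_k}=U^*_{k\wedge J}$. It remains to show that $U^*_{k\wedge J}=U^*_k$ almost surely for every $k$, i.e.\ that on the event $\{J<\infty\}$ the chain $U^*$ has already reached $A$ by time $J$. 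This is exactly where the hypothesis comes in: on $\{J=j\}=\{\sigma_j<\infty,\ \sigma_{j+1}=\infty\}$, the strong Markov property at $\sigma_j$ shows that, conditionally on the trajectory up to time $\sigma_j$, the probability that $I$ never returns to the value $1$ after $\sigma_j$ equals $\P_{(U^*_j,\,V_{\sigma_j},\,1)}(\forall n\ge1,\ I_n\ne1)$, which vanishes as soon as $U^*_j\notin A$ by assumption; integrating and summing over $j$ yields $\P(J<\infty,\ U^*_J\notin A)=0$. Since $A$ is absorbing for $\alpha$, on $\{J<\infty\}$ we then have $U^*_k=U^*_J=U^*_{k\wedge J}$ for all $k\ge J$, so $(U_{n_k})_{k\ge0}=(U^*_k)_{k\ge0}$ almost surely. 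The symmetric argument, using the hypothesis on returns of $I$ to the value $2$, gives $(V_{m_k})_{k\ge0}=(V^*_k)_{k\ge0}$ almost surely.

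Combining the two identities, $\bigl((U_{n_k})_{k\ge0},(V_{m_k})_{k\ge0}\bigr)=\bigl((U^*_k)_{k\ge0},(V^*_k)_{k\ge0}\bigr)$ almost surely, and the right-hand side is, by construction, a pair of \emph{independent} Markov chains with transition matrix $\alpha$ started respectively from $x$ and $y$ --- that is, a copy of $\bigl((X_k)_{k\ge0},(Y_k)_{k\ge0}\bigr)$ under $\P_x\otimes\P_y$ --- which is the claim. The main obstacle is the point handled in the second paragraph: a coordinate may move only finitely often, and one must rule out that this happens while it sits at a non-absorbing state; this is false without the hypothesis (the other coordinate could monopolise all the moves forever), and the clean way to exclude it is the strong Markov property at the $j$-th return time of $I$ to the relevant value, as above. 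The only other (purely routine) step is the verification in the first paragraph that the interleaved construction does reproduce the prescribed transition matrix.
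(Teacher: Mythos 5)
Your proof is correct, but it takes a genuinely different route from the paper's. The paper proceeds by a double induction on $(r,s)$: it shows directly that the finite-dimensional marginal $\bigl((U_{n_k})_{0\le k\le r},(V_{m_k})_{0\le k\le s}\bigr)$ under $\P_{(x,y,1)}$ factorises as the appropriate product of $\alpha$-transitions, by conditioning on the history at time $n_r\vee m_s$ and then using the strong Markov property together with the hypothesis to compute the probability of the next composite step. Your argument is a coupling: you realise the whole triple from two independent $\alpha$-chains $U^*,V^*$ and an independent i.i.d.\ sequence of uniforms deciding which coordinate moves; you then observe that $U_{n_k}=U^*_{k\wedge J}$ where $J$ is the total number of moves of the first coordinate, and invoke the hypothesis exactly once (via the strong Markov property at $\sigma_J$) to show that on $\{J<\infty\}$ the chain $U^*$ is already absorbed by time $J$, whence $U_{n_k}=U^*_k$ almost surely, and symmetrically for $V$. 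The conclusion is then immediate because $U^*$ and $V^*$ are independent by construction. Your route is more structural and makes the mechanism transparent --- independence of the two read-off sequences is baked into the coupling, and the hypothesis serves only to rule out a coordinate stalling at a non-absorbing state. The trade-off is the ``routine verification'' you flag in the first paragraph: checking that the interleaved construction has law $\P_{(x,y,1)}$ does require a strong-Markov-at-the-random-times-$p_n,q_n$ type argument for $U^*$ and $V^*$, which the paper's direct computation sidesteps at the cost of a somewhat heavier double induction handling the absorbed and non-absorbed cases separately.
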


\begin{proof}[Proof of Lemma \ref{lemme chaines de markov altern\'ees}]
We show that $\left( (U_{n_k})_{0\leq k \leq r},(V_{m_k})_{0 \leq k \leq s} \right)$ under $\P_{(x,y,1)}$ has the same law as \\ $\left( (X_k)_{0 \leq k \leq r}, (Y_k)_{0 \leq k \leq s} \right)$ under $\P_x \otimes \P_y$ by double induction on $(r,s)$. If $r=s=0$, the result is obvious. Suppose it is true for some pair $(r,s)$. We show it for $(r,s+1)$ (the case $(r+1,s)$ is similar). Let ${\mathbf x = (x_1,\dots,x_r) \in E^r}$, $\mathbf y = (y_1,\dots,y_s) \in E^s$ and $y_{s+1} \in E$. Write $\mathbf U \coloneqq (U_{n_k})_{1\leq k \leq r}$, $\mathbf V \coloneqq (V_{m_k})_{1 \leq k \leq s}$, $\mathbf X \coloneqq (X_{k})_{1 \leq k \leq s}$, $\mathbf Y \coloneqq (Y_{k})_{1 \leq k \leq s}$ and $\P \coloneqq \P_{(x,y,1)}$. Suppose that $y_s \notin A$, then $m_{s+1}>m_s$ and
\begin{align*}
\P\left( \mathbf U = \mathbf x,\mathbf V= \mathbf y, V_{m_{s+1}}=y_{s+1} \right)= \P\left( \mathbf U = \mathbf x,\mathbf V = \mathbf y \right) \P_{(x_r,y_s,1)}\left(\exists n \geq 1,\, I_1=\cdots=I_{n-1}=1,\,I_n=2,\,V_n=y_{s+1} \right).    
\end{align*}
By induction hypothesis, $\P\left( \mathbf U = \mathbf x,\mathbf V = \mathbf y \right) = \P_x\left( \mathbf X = \mathbf x\right)\P_y\left(\mathbf Y = \mathbf y \right)$. Moreover
\begin{align*}
\P_{(x_r,y_s,1)}\left(\exists n \geq 1,\, I_1=\cdots=I_{n-1}=1,\,I_n=2,\,V_n=y_{s+1} \right) &= \P_{(x_r,y_s,1)}\left(\exists n \geq 1, I_n=2\right) \alpha(y_s,y_{s+1})\\
&= \alpha(y_s,y_{s+1}).
\end{align*}
This shows the formula for $(r,s+1)$ in the case $y_s \notin A$. Now, if $y_{s} \in A$ then
\begin{align*}
\P\left( \mathbf U = \mathbf x,\mathbf V= \mathbf y, V_{m_{s+1}}=y_{s+1} \right) &= \P\left( \mathbf U = \mathbf x,\mathbf V = \mathbf y\right) \P_{(x_r,y_s,1)}\left(\forall m \geq 1, I_m = 1 \right) \mathbbm{1}_{y_s = y_{s+1}} \\
&\qquad+ \P\left( \mathbf U = \mathbf x,\mathbf V = \mathbf y\right) \P_{(x_r,y_s,1)}\left(\exists m \geq 1, I_m = 2 \right) \alpha(y_s, y_{s+1}) \\
&=\P_x\left( \mathbf X = \mathbf x\right)\P_y\left(\mathbf Y = \mathbf y, Y_{s+1}=y_{s+1} \right).  
\end{align*}
This completes the proof.
\end{proof}

\section{The height of uniform attachment trees with freezing}
\label{sec:distances}

We shall now study the geometry of  uniform attachment trees with freezing. 
 It will be convenient to work with $\T^n(\Xb)$ as built using Algorithm \ref{algo2}. Recall {from} Theorem~\ref{thm:samelaw} {that}: the only difference between $\T_{n}(\Xb)$  and $\T^{n}(\Xb)$ is that all the active vertices of $\T^{n}(\Xb)$ are labeled $a_{1}, \ldots, a_{S_{n}(\Xb)}$, while all the active vertices of $ \T_{n}(\Xb)$ are labelled $a$. In particular the graph structure of both trees is the same in law, so it is equivalent to establish our main results with $\T_{n}(\Xb)$ replaced with $\T^{n}(\Xb)$.

Also recall from \eqref{eq:labels} the definition of $\mathbb{V}_{n}(\Xb)$, which is a deterministic set representing the labels of the vertices of $\T^{n}(\Xb)$, and that by a slight abuse of notation we view  elements of $\mathbb{V}_{n}(\Xb)$ as  vertices of  $\T^{n}(\Xb)$.

An important ingredient {in our proofs} is Bennett's concentration inequality. Since it will be used multiple times, we state it here, {tailored} for our purpose:

\begin{proposition}[Bennett's inequality]
\label{prop:bennett}
Let $(Y_{i})_{1 \leq i \leq n}$ be independent Bernoulli random variables of respective parameters $(p_{i})_{1\leq i \leq n}$. Set $m_{n}=\sum_{i=1}^{n} p_{i}$ and assume that $m_{n}>0$. Then for every $t>0$:
\[\proba{\sum_{i=1}^{n} Y_{i} >m_{n}+ t} \leq \exp \left( - m_{n} g \left( \frac{t}{m_{n}} \right) \right) \qquad \textrm{and} \qquad \proba{\sum_{i=1}^{n} Y_{i}< m_{n}- t} \leq \exp \left( - m_{n} g \left( \frac{t}{m_{n}} \right) \right),\]
 where $g(u)=(1+u)\ln(1+u)-u$ for $u>0$.
\end{proposition}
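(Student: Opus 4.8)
The plan is to run the classical Cram\'er--Chernoff (exponential Markov) argument separately for each tail, and then to compare the two resulting rate functions. For the upper tail, I would fix $\lambda>0$, apply Markov's inequality to $e^{\lambda\sum_i Y_i}$ and use independence to factor $\esp{e^{\lambda\sum_i Y_i}}=\prod_i(1+p_i(e^\lambda-1))$; bounding each factor by $\exp(p_i(e^\lambda-1))$ and using $\sum_i p_i=m_n$ gives $\proba{\sum_i Y_i>m_n+t}\le\exp(m_n(e^\lambda-1)-\lambda(m_n+t))$. Optimising the exponent over $\lambda>0$ yields the critical value $\lambda=\ln(1+t/m_n)$, and a short computation (writing $u=t/m_n$) identifies the optimal exponent as $-m_n((1+u)\ln(1+u)-u)=-m_n g(u)$, which is the first claimed inequality (passing between the strict and non-strict event is harmless).

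For the lower tail, I would first dispose of the case $t\ge m_n$: there $m_n-t\le0\le\sum_i Y_i$, so the event $\{\sum_i Y_i<m_n-t\}$ is empty and there is nothing to prove. Assuming $0<t<m_n$, the mirror computation with $e^{-\lambda\sum_i Y_i}$ for $\lambda>0$ gives $\proba{\sum_i Y_i<m_n-t}\le\exp(m_n(e^{-\lambda}-1)+\lambda(m_n-t))$, whose exponent is minimised at $\lambda=-\ln(1-t/m_n)>0$; substituting and setting $u=t/m_n\in(0,1)$ produces the optimal exponent $-m_n((1-u)\ln(1-u)+u)=-m_n g(-u)$.

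It then remains to upgrade $-m_n g(-u)$ to the weaker but cleaner $-m_n g(u)$, that is, to check the elementary inequality $g(-u)\ge g(u)$ for $u\in(0,1)$: both $v\mapsto g(v)$ and $v\mapsto g(-v)$ vanish at $v=0$ together with their first derivatives, while $\frac{d^2}{dv^2}g(-v)=(1-v)^{-1}\ge(1+v)^{-1}=\frac{d^2}{dv^2}g(v)$, so $g(-v)\ge g(v)$ on $(0,1)$ by integrating twice. This being a textbook estimate, there is no genuine obstacle; the only points requiring a little care are the edge case $t\ge m_n$ for the lower tail, the bookkeeping in the $\lambda$-optimisation (in particular that the critical $\lambda$ is positive, which holds since $t>0$), and the convexity comparison of $g(u)$ and $g(-u)$ just described.
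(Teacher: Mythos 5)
Your proof is correct: the paper states Bennett's inequality as a classical result and gives no proof of its own, so there is nothing to compare against. Your Cram\'er--Chernoff derivation — optimising $\lambda=\ln(1+t/m_n)$ for the upper tail, $\lambda=-\ln(1-t/m_n)$ for the lower tail after disposing of $t\ge m_n$, and then using the convexity comparison $g(-u)\ge g(u)$ on $(0,1)$ to put both bounds in the stated symmetric form — is the standard and complete argument.
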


\subsection{Uniform bounds on the height}
\label{ssec:height}

We keep the notation introduced before (see {in particular} Table \ref{tab:secdiscret}). In particular, $\Xb=(\X_n)_{n \geq 1}$ is a fixed sequence  of elements of $\{-1,1\}$, $S_{0}(\Xb) \coloneqq1$ and for every $n \geq 1$
\begin{equation}
S_n(\Xb) \coloneqq 1+ \sum_{i=1}^n \X_i; \qquad \tau(\Xb) \coloneqq \inf \{n \geq 1 : S_{n}(\Xb)=0\}.
\end{equation}

To lighten notation we will drop {again} the $\Xb$ in parenthesis (e.g.~we will write $S_{n}$ instead of $S_{n}(\Xb)$, $\tau$ instead of $\tau(\Xb)$, $\T^{n}$ instead of $\T^{n}(\Xb)$, $\V_{n}(\Xb)$ instead of $\V_{n}$, etc.)  {\sout{but we keep in mind that all the quantities depend on the sequence $\Xb$}}.
We first start with an estimate concerning  the height of $\T^n$ (recall its construction using Algorithm \ref{algo2} in Section~\ref{ssec:growthcoalescent}).

\begin{proposition}
\label{prop:bounds}
Fix $n \geq 1$ and assume that $ \tau>n$. Let $\Height(\T^n)$ denote the height of $\T^n$. Then, for every $t>0$,
\[
\proba{\Height(\T^n)-h^{+}_{n}>t} \leq \exp\left(-h^{+}_{n}\,g\left(\frac{t}{h^{+}_{n}}\right)+\ln(n)\right)
\]
and
\[
\proba{\Height(\T^n)-h^{+}_{n}<-t} \leq \exp\left(-h^{+}_{n}\,g\left(\frac{t}{h^{+}_{n}}\right)\right)
\]
where $g(u)=(1+u)\ln(1+u)-u$ for $u>0$.
\end{proposition}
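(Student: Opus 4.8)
The plan is to control $\Height(\T^n)$ by combining the distributional identity \eqref{eq:H0} for the height of a single vertex with Bennett's inequality (Proposition~\ref{prop:bennett}) and a union bound over vertices. Recall from Lemma~\ref{lem:accroissementH} and \eqref{eq:H0} that for each fixed vertex $u \in \V_n$ one has $\H^n_0(u) \overset{(d)}{=} \sum_{i=1}^{\birth_n(u)} Y_i \1_{\{\X_i=1\}}$, where the $Y_i$ are independent Bernoulli with parameters $1/S_i$. In particular the mean of this sum is $m(u) := \sum_{i=1}^{\birth_n(u)} \tfrac{1}{S_i}\1_{\{\X_i=1\}} \le h^+_n$, with equality precisely when $\birth_n(u)=n$, i.e.\ for the active vertices. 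Since $\Height(\T^n) = \max_{u \in \V_n} \H^n_0(u)$, the upper tail is the delicate one and the lower tail is governed by a single (active) vertex.

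For the lower bound, I would fix any active vertex $u \in \A_n$, so that $m(u) = h^+_n$, and apply the lower-tail half of Bennett's inequality to $\sum Y_i\1_{\{\X_i=1\}}$ with parameter $t$: this gives $\proba{\H^n_0(u) < h^+_n - t} \le \exp(-h^+_n g(t/h^+_n))$. Since $\Height(\T^n) \ge \H^n_0(u)$, the same bound holds for $\proba{\Height(\T^n) - h^+_n < -t}$, which is exactly the second claimed inequality.

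For the upper bound I would union bound over all $u \in \V_n$. For each $u$, write $m(u) \le h^+_n$ and apply the upper-tail half of Bennett to $\sum_{i=1}^{\birth_n(u)} Y_i \1_{\{\X_i=1\}}$ around its own mean $m(u)$; the point is that the exceedance level must be taken relative to $h^+_n$, i.e.\ I use $\proba{\sum Y_i\1_{\{\X_i=1\}} > h^+_n + t} = \proba{\sum Y_i\1_{\{\X_i=1\}} > m(u) + (h^+_n - m(u) + t)}$. Bennett then bounds this by $\exp(-m(u)\, g((h^+_n - m(u)+t)/m(u)))$, and the mild monotonicity fact that $x \mapsto x\, g((a-x+t)/x)$ is non-increasing in $x$ on $(0,a]$ (equivalently that a Binomial-type upper tail at a fixed absolute level only gets larger as the mean increases toward that level) lets me replace $m(u)$ by $h^+_n$ and conclude $\proba{\H^n_0(u) > h^+_n + t} \le \exp(-h^+_n g(t/h^+_n))$ for every $u$. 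Summing over the at most $N_n \le n$ vertices of $\T^n$ (using $N_n = (S_n+n+1)/2 \le n$ for $\tau > n$, since $S_n \le n-1$) yields $\proba{\Height(\T^n) - h^+_n > t} \le n\exp(-h^+_n g(t/h^+_n)) = \exp(-h^+_n g(t/h^+_n) + \ln n)$.

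The main obstacle is the monotonicity step in the upper-tail argument: one must justify that passing from the true mean $m(u) \le h^+_n$ to $h^+_n$ does not weaken the Bennett bound when the deviation threshold is held fixed in absolute terms. I would handle this either by the stochastic-domination observation that adding extra independent Bernoulli summands (to bridge from $\birth_n(u)$ up to $n$) only increases the sum, hence increases the probability of exceeding the fixed level $h^+_n + t$, so that the worst case is indeed $m(u) = h^+_n$; or, more directly, by checking that $t \mapsto \exp(-h^+_n g(t/h^+_n))$ applied with the larger mean dominates the expression with the smaller mean via convexity of $g$. The stochastic domination route is cleanest and avoids any calculus. Everything else is a routine assembly of Bennett's inequality with a union bound.
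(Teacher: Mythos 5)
Your proof follows the paper's argument exactly: the lower tail comes from Bennett's inequality applied to a single active vertex (for which $\birth_n(u)=n$, so the Bernoulli sum has mean exactly $h^+_n$), and the upper tail comes from a union bound over all vertices combined with the stochastic-domination observation that extending each sum $\sum_{i\le\birth_n(u)} Y_i\1_{\{\X_i=1\}}$ to $\sum_{i\le n} Y_i\1_{\{\X_i=1\}}$ can only increase it, so a single Bennett bound at mean $h^+_n$ applies to every vertex. Your alternative monotonicity verification is also correct (indeed $x\mapsto x\,g((a-x+t)/x) = (a+t)\ln(a+t)-(a+t)\ln x -a-t+x$ is decreasing on $(0,a+t)$), but the stochastic-domination route you prefer is the one the paper takes. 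One small slip: $S_n\le n-1$ is false in general (if $\X_i=1$ for all $i\le n$ then $S_n=n+1$ and $N_n=n+1$); the factor $n$ in the union bound is nonetheless justified since the root has height $0$ and cannot contribute to $\{\Height(\T^n)>h^+_n+t\}$, leaving at most $N_n-1\le n$ relevant vertices.
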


\begin{proof}
Fix $u \in \V_{n}$. From \eqref{eq:H0}, if $(Y_i)_{1\leq i \leq n}$ are independent Bernoulli random variables of respective parameters $(1/S_i)_{1\leq i \leq n}$, we have
\[
\H^{n}_{0}(u) \quad \mathop{=}^{(d)} \quad \sum_{i=1}^{\birth_{n}(u)} Y_i\1_{\{\X_i=1\}}.
\]
Thus, by a union bound over all vertices of $\T^n$ we get
\[
\proba{\Height(\T^n)>h^{+}_n+t} \leq n\proba{\sum_{i=1}^n \left(Y_i-\frac{1}{S_i}\right)\1_{\{\X_i=1\}}>t}.
\]
In addition, when $u \in \V_{n}$ is an active vertex (so that  $\birth_{n}(u)=n$), using the fact that $  \H^{n}_{0}(u) \leq \Height(\T_n)$, we get 
\[
\proba{\Height(\T^n)< h^{+}_n-t} \leq \proba{\sum_{i=1}^n \left(Y_i-\frac{1}{S_i}\right)\1_{\{\X_i=1\}}<-t}.
\]
The desired {bounds} then {both follow} from Bennett's inequality.
\end{proof}

\subsection{A simple lower bound for  $\h^+_n$}

In view of Theorem \ref{thm:height}, which tells us that $\h^+_n$ is the  order of magnitude of $\Height(\T^n)$, it is useful to have estimates on $\h_{n}^{+}$. We give here a simple lower bound on  $\h^+_n$.

\begin{lemma}
\label{lemme minoration h_n^+}
Let $n \geq 1$. If $\tau > n$ then $\h^+_n \geq \ln(n/4)$.
\end{lemma}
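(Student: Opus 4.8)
The plan is to reindex the sum defining $\h^+_n$ along the ``up-steps'' of the walk $(S_i)_{i\ge 0}$. Writing $i_1<i_2<\cdots<i_N$ for the times $i\in\br{1,n}$ with $\X_i=1$, we have
\[
\h^+_n \;=\; \sum_{j=1}^{N}\frac{1}{S_{i_j}}.
\]
The key observation is that $S_{i_j}$ should be bounded not by the trivial $i_j+1$ but by $j+1$: among $\X_1,\dots,\X_{i_j}$ exactly $j$ equal $+1$ and $i_j-j$ equal $-1$, so $S_{i_j}=1+2j-i_j$; since $i_1<\cdots<i_j$ are $j$ distinct positive integers we have $i_j\ge j$, hence $1\le S_{i_j}\le j+1$ (the lower bound because $i_j\le n<\tau$).

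With $1/S_{i_j}\ge 1/(j+1)$ in hand I would write $\h^+_n\ge\sum_{j=1}^{N}1/(j+1)\ge\int_{1}^{N+1}\frac{\d x}{x+1}=\ln\!\big((N+2)/2\big)$, and then use that $\tau>n$ forces $S_n\ge 1$, so that $N=(S_n+n-1)/2\ge n/2$; substituting gives $\h^+_n\ge\ln\!\big((n/2+2)/2\big)\ge\ln(n/4)$, which is the claim.

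I do not expect a genuine obstacle: the whole proof is only a few lines. The one thing that requires attention is using the right bound on $S_{i_j}$ — the naïve estimate $S_i\le i+1$ is too lossy (over all walks with $\tau>n$ it would deliver only a lower bound of order $\tfrac12\ln n$, which is weaker than $\ln(n/4)$ for $n$ large), whereas counting only the up-steps seen up to time $i_j$ gives the sharp bound $S_{i_j}\le j+1$; one also has to be a little careful to keep the additive constant right when turning the partial harmonic sum into a logarithm, bounding $\sum_{j\le N}1/(j+1)$ directly rather than going through $H_{N+1}-1\ge\ln(N+1)-1$.
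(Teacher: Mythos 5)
Your proof is correct. The key inequality $S_{i_j}\le j+1$ (at the $j$-th up-step the walk cannot exceed $j+1$, since among $\X_1,\dots,\X_{i_j}$ exactly $j$ are $+1$ and $i_j-j\ge 0$ are $-1$) immediately gives $\h^+_n\ge\sum_{j=1}^N 1/(j+1)\ge\ln\bigl((N+2)/2\bigr)$, and $\tau>n$ forces $N\ge n/2$, so $\h^+_n\ge\ln(n/4+1)\ge\ln(n/4)$. All the bookkeeping checks out.

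The paper reaches the same bound by a different mechanism: a rearrangement argument. It observes that swapping an adjacent $(-1,+1)$ pair to $(+1,-1)$ can only decrease $\h^+_n$ (since this raises the one value $S_i$ that changes), iterates to reduce to the ``all $+1$'s first'' sequence, for which $S_i=i+1$ on the first $p_n$ steps, and then computes $\sum_{i=1}^{p_n}1/(i+1)$ directly. Both approaches ultimately rest on the same fact — the $j$-th up-step contributes at least $1/(j+1)$ — but you prove it by a direct counting identity ($S_{i_j}=1+2j-i_j\le j+1$), whereas the paper proves it by exhibiting an extremal configuration and showing the general case is at least as large. Your route avoids having to justify and iterate the exchange inequality, so it is arguably more self-contained; the paper's route makes explicit the extremal walk that minimizes $\h^+_n$ subject to the constraint, which is a slightly more structural piece of information. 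Either way the final estimate and the constants are the same.
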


\begin{proof}
Set $p_n \coloneqq \#\{1 \leq i \leq  n : \X_{i}=+1\}$ and $m_n \coloneqq \#\{1 \leq i \leq  n : \X_{i}=-1\}$. If there exists $1 \leq i \leq n-1$ such that $\X_{i}=-1$ and $\X_{i+1}=+1$, set $\Xb'=(\X_{1}, \ldots,\X_{i-1},+1,-1,\X_{i+2}, \X_{i+3},\ldots)$. Then it is a simple matter to check that $\h^+_n(\Xb) \geq \h^+_n(\Xb')$. By iteration, it readily follows that $\h^+_n(\Xb) \geq \h^+_n((+1,\ldots,+1,-1, \ldots,-1, \Xb_{n+1},\Xb_{n+2}, \ldots))$ where $+1$ appears $p_n$ times and $-1$ appears $m_n$ times. Since $\tau(\Xb)>n$ entails $p_n \geq n/2$, it follows that $
\h^+_n(\Xb) \geq \sum_{i=1}^{p_n } 1/(i+1) \geq \ln((p_n+1)/2) \ge \ln(n/4)$.
\end{proof}
\subsection{Limit theorems for the height}
\label{ssec:limitheightBennet}
We are now in position to establish Theorem \ref{thm:height}.

\begin{proof}[Proof of Theorem \ref{thm:height}]

We start with {(I)}. Recall that for an active vertex $u \in \A_n$ we always have $\birth_n(u)=n$. Therefore, by \eqref{eq:H0}
\[
\H(U^n) \quad \mathop{=}^{(d)} \quad \sum_{i=1}^{n} Y^{n}_i\1_{\{\X_i=1\}}.
\]
Using Bennett's inequality (Proposition \ref{prop:bennett}), for all $\e>0$
\[
\proba{\left\vert\frac{\H(U^n)}{\h^+_n}-1\right\vert>\e} \leq 2\exp\left(-\h_n^+\,g(\e)\right).
\]
This upper bound goes to $0$ as $n$ tends to infinity since $\h_n^+$ goes to infinity by Lemma \ref{lemme minoration h_n^+}. We deduce that the sequence $\left({\H(U^n)}/{\h^+_n}-1\right)_n$ converges to $0$ in probability. To conclude it suffices to show that it is bounded in $\Lp^p$ for all $p\geq 1$. Let $p\geq 1$, we have 
\[
\esp{\left\vert\frac{\H(U^n)}{\h^+_n}-1\right\vert^p} = p\int_0^{\infty} \proba{\left\vert\frac{\H(U^n)}{\h^+_n}-1\right\vert\geq u}u^{p-1}\d u. 
\]
Using Bennett's inequality again and by splitting the integral in half at $u=7$, the above quantity is bounded by
\[
7^p + 2p\int_7^{\infty}\exp\left(-\h^+_n\,g(u)\right)u^{p-1}\d u.
\]
Notice that for $u\geq 7 \geq e^2-1$, we have $g(u) \geq u$ thus
\[
\int_7^{\infty}\exp\left(-\h^+_n\,g(u)\right)u^{p-1}du \leq \int_7^{\infty}\exp\left(-\h^+_n\,u\right)u^{p-1}\d u = \frac{1}{(\h^+_n)^p} \int_{7\h^+_n}^{\infty} \exp(-u)u^{p-1}\d u.
\]
By an iterated integration by parts, the last quantity is $\mathcal{O}\left({\exp(-7\h^+_n)}/{\h^+_n}\right)$. Therefore the boundedness in $\Lp^p$ follows. 

We now establish {(II)} and {(III)}. Fix $C>1$. By Proposition \ref{prop:bounds}, for all $n\geq1$:
\[
\proba{\frac{\Height(\T^n)}{\h^+_n}>C} \leq \exp\left(-\h^+_n\,g(C-1) + \ln(n)\right).
\]
This upper bound goes to $0$ when $\ln(n) = o(\h^+_n)$ since $g(C-1)>0$. It also goes to $0$ when $C=e+\e$ since $g(e-1+\e)>1$ and by Lemma \ref{lemme minoration h_n^+}, $\h^+_n \geq \ln(n/4)$. Similarly,
\[\proba{ \frac{\Height(\T^n)}{\h^+_n} <  1-\e} \leq  \exp \left( -\h^+_n(\Xb)g \left(\e\right) \right)  \quad \mathop{\longrightarrow}_{n \rightarrow \infty} \quad 0.\]
This completes the proof of {(II)}. It also shows that the convergence
\[
\frac{\Height(\T^n)}{\h^+_n} \cvproba 1
\]
holds in probability when $\ln(n) = o(\h^+_n)$. To fully show {(III)} we just need to prove that the sequence $\left({\Height(\T^n)}/{\h^+_n}-1\right)_n$ is bounded in $\Lp^p$ for all $p\geq 1$ when $\ln(n) = o(\h^+_n)$. It is done using the same method as in the proof of {(I)} 
{by using the fact that for $n$ sufficiently large we have 
$$\proba{\frac{\Height(\T^n)}{\h^+_n}>C} \leq    \exp\left(-\h^+_n\,g(C-1)/2\right)$$
for every $C \geq2$.}
\end{proof}

\section{Regime with a linear amount of active vertices} \label{sec:Linear}

We shall now investigate the regime where the number of active vertices grows roughly linearly in the size of the trees: the main goal of this section is to establish Theorem \ref{thm:lineaire}. Here we assume that \eqref{eq:assumptions} is in force, namely {that} there exist $c\in (0,1]$ and a sequence $(A_n)_{n\in \N}$ of positive numbers such that $A_n=o(\log n)$ as $n\to\infty$ and {such} that
\[
\lim_{\e \to 0} \limsup_{n\to \infty} \max_{A_n \leqslant i \leqslant \e n} \left| \frac{S^n_i}{i}-c \right| = 0
\qquad \text{and} \qquad \forall \e>0, \enskip
\liminf_{n\to \infty} \min_{\e n \leqslant i \leqslant n } \frac{S^n_i}{n} >0.
\]

\subsection{An asymptotic equivalent for $\h_{n}^{+}$}

Here we show the convergence of Theorem \ref{thm:lineaire} {(I)}, namely:
\begin{equation}
\label{cv hn cas lineaire}
\displaystyle \frac{\h^+_n}{\ln n} \xrightarrow[n\to\infty]{} \frac{c+1}{2c}.
\end{equation}
To simplify notation, for all $n \in \N$ and for all $\e >0$, set
\[
\eta_n(\e) \coloneqq \max_{A_n \leqslant i \leqslant \e n } \left \lvert \frac{S^n_i}{i} -c \right\rvert .
\]
By \eqref{eq:assumptions}, we know that $\limsup \eta_n(\e) \to 0$ when $\e \to 0$.
\begin{lemma}
\label{lem:estimee uniforme sur h_n+}
Suppose that the assumptions \eqref{eq:assumptions} hold with $A_n\to\infty$ when $n\to\infty$. Then, for all $C>1$
\begin{equation}
\label{eq:estimee uniforme sur h_n+}
\lim_{\e \to 0} \limsup_{n\to \infty} \max_{\substack{A_n \leqslant a< b \leqslant \e n\\aC<b}} \left| \frac{1}{\ln(b/a)}\sum_{i=a+1}^b \frac{1}{S_i^n}\mathbbm 1_{\{\X_i^n=1\}} - \frac{{c+1}
}{2c} \right| = 0.
\end{equation}
\end{lemma}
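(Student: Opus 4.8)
The plan is to show that, uniformly over the admissible range of pairs $(a,b)$, one has
\[
\sum_{i=a+1}^b \frac{1}{S_i^n}\1_{\{\X_i^n=1\}} = \frac{1+c}{2c}\ln(b/a) + O\big(\eta_n(\e)\ln(b/a)\big) + O(1/A_n),
\]
after which one divides by $\ln(b/a)$, lets $n\to\infty$ (killing the $O(1/A_n)$ term since $A_n\to\infty$), and then lets $\e\to0$ (killing the remaining term by \eqref{eq:assumptions}). Throughout we may assume $\e$ is small enough that $\limsup_n\eta_n(\e)<c/4$, hence $\eta_n(\e)<c/2$ and $S_i^n\ge (c-\eta_n(\e))i\ge \tfrac c2 i$ for all $A_n\le i\le \e n$ once $n$ is large. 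Note also that the constraint $aC<b$ forces $b-a\ge a(C-1)\to\infty$ and, crucially, $\ln(b/a)\ge\ln C>0$, so the sums are nonempty and additive $O(1)$ errors become $O(1/\ln C)$-controlled after normalization.

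\textbf{Step 1: replacing $S_i^n$ by $ci$.} For $A_n\le i\le \e n$ we have $\big|\tfrac1{S_i^n}-\tfrac1{ci}\big|=\tfrac{|ci-S_i^n|}{c\,i\,S_i^n}\le \tfrac{2\eta_n(\e)}{c^2 i}$, so summing the indicator-weighted difference over $a+1\le i\le b$ contributes at most $\tfrac{2\eta_n(\e)}{c^2}\sum_{i=a+1}^b\tfrac1i=O(\eta_n(\e)\ln(b/a))$, uniformly. It therefore suffices to estimate $\tfrac1c\sum_{i=a+1}^b\tfrac1i\1_{\{\X_i^n=1\}}$.

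\textbf{Step 2: Abel summation and the explicit sums.} Set $P(i):=\#\{a<j\le i:\X_j^n=1\}$, so $\1_{\{\X_i^n=1\}}=P(i)-P(i-1)$, and, counting the $i-a$ steps of $(a,i]$ (with $p+m=i-a$ and $p-m=S_i^n-S_a^n$), $P(i)=\tfrac{(i-a)+(S_i^n-S_a^n)}{2}$. Summation by parts, using $P(a)=0$, gives
\[
\sum_{i=a+1}^b \frac1i\1_{\{\X_i^n=1\}} = \frac{P(b)}{b} + \sum_{i=a+1}^{b-1}\frac{P(i)}{i(i+1)}.
\]
On the range $S_i^n=ci+r_i$ with $|r_i|\le\eta_n(\e) i$, hence $P(i)=\tfrac{(1+c)(i-a)}{2}+\tfrac{r_i-r_a}{2}$ with $|r_i-r_a|\le 2\eta_n(\e) i$; the contribution of the remainder to both terms is again $O(\eta_n(\e)\ln(b/a))$ (using $\sum_{i\le b}\tfrac1{i+1}=O(\ln(b/a))$ and $\tfrac{\eta_n(\e) b}{b}=\eta_n(\e)$). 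For the main terms, the telescoping identity $\tfrac{i-a}{i(i+1)}=\tfrac1i-\tfrac{a+1}{i(i+1)}$ yields
\[
\sum_{i=a+1}^{b-1}\frac{i-a}{i(i+1)} = \sum_{i=a+1}^{b-1}\frac1i - \Big(1-\frac{a+1}{b}\Big) = \ln(b/a) - 1 + \frac{a+1}{b} + O(1/a),
\]
while $\tfrac{P(b)}{b}=\tfrac{1+c}{2}\big(1-\tfrac ab\big)+O(\eta_n(\e))$. Adding, and using $-\tfrac ab+\tfrac{a+1}{b}=\tfrac1b$, the bracket collapses to $\ln(b/a)+O(1/a)$, giving $\sum_{i=a+1}^b\tfrac1i\1_{\{\X_i^n=1\}}=\tfrac{1+c}{2}\ln(b/a)+O(\eta_n(\e)\ln(b/a))+O(1/a)$, and $a\ge A_n$ turns the last term into $O(1/A_n)$.

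\textbf{Step 3: conclusion.} Combining Steps 1 and 2, dividing by $\ln(b/a)$, and bounding $\tfrac{1}{A_n\ln(b/a)}\le \tfrac{1}{A_n\ln C}$, we obtain a constant $K$ (depending only on $c$) with
\[
\max_{\substack{A_n\le a<b\le\e n\\ aC<b}}\Big|\frac{1}{\ln(b/a)}\sum_{i=a+1}^b\frac{\1_{\{\X_i^n=1\}}}{S_i^n}-\frac{1+c}{2c}\Big| \le K\,\eta_n(\e) + \frac{K}{A_n\ln C}
\]
for $n$ large. Taking $\limsup_{n\to\infty}$ removes the second term since $A_n\to\infty$, leaving $\le K\limsup_n\eta_n(\e)$, and then $\e\to0$ gives $0$ by \eqref{eq:assumptions}.

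\textbf{Main obstacle.} There is no deep difficulty here; the work is entirely in the uniform bookkeeping of error terms. The two features of the statement that make it go through are exactly the two extra hypotheses on the range: $a\ge A_n\to\infty$ controls the harmonic-sum remainders $O(1/a)$, and the restriction $aC<b$ keeps $\ln(b/a)$ bounded away from $0$, so that after normalisation every additive $O(1)$ error is negligible — leaving only the $O(\eta_n(\e))$ contribution, which is precisely what \eqref{eq:assumptions} is designed to kill.
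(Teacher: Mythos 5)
Your proof is correct, and it takes a genuinely different route from the paper's. The paper partitions $(a,b]$ into $\beta$-adic blocks $(\beta^{k-1},\beta^k]$, bounds each block's contribution from above and below via the counting identity $\#\{i<j\le \ell : \X^n_j=1\} = \frac{(S_\ell^n - S_i^n) + (\ell-i)}{2}$ (displayed as \eqref{EqProp9}), sums over the roughly $\ln(b/a)/\ln\beta$ blocks, and finally lets $\beta\to 1$ to recover the constant $\frac{1+c}{2c}$. You instead freeze the weight via $1/S_i^n \approx 1/(ci)$, perform a single Abel (partial) summation with $P(i)=\#\{a<j\le i:\X_j^n=1\}$, expand $P(i)$ by the same counting identity, and then evaluate the resulting telescoping sums in closed form. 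This avoids both the dyadic bookkeeping and the auxiliary $\beta\to 1$ limit, and in fact yields the slightly stronger quantitative form
\[
\sum_{i=a+1}^b \frac{\1_{\{\X_i^n=1\}}}{S_i^n} \;=\; \frac{1+c}{2c}\ln\!\left(\frac{b}{a}\right) \;+\; O\!\bigl(\eta_n(\e)\ln(b/a)\bigr) \;+\; O(1/A_n),
\]
uniformly over the admissible $(a,b)$, rather than merely a vanishing $\limsup$. Both arguments ultimately rest on the same two ingredients: the uniform concentration $|S_i^n/i - c|\le \eta_n(\e)$ on $[A_n,\e n]$, and the constraint $aC<b$ keeping $\ln(b/a)\ge\ln C>0$ so that additive $O(1)$ errors wash out after normalisation. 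Your bookkeeping (the telescoping of $\frac{i-a}{i(i+1)}$, the cancellation $-\frac{a}{b}+\frac{a+1}{b}=\frac1b$, and the boundary term $P(b)/b$) all check out.
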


\begin{proof}[Proof of Lemma \ref{lem:estimee uniforme sur h_n+}]
We  prove \eqref{eq:estimee uniforme sur h_n+} with a double bound. First, let us start with the upper bound. Let $\e\in(0,1)$ and $\beta>1$. For all $A_n\leq a<b\leq \e n$: 
\[
\sum_{i=a+1}^{b}\frac{1}{S_i^n}\mathbbm{1}_{\{\X_i^n=1\}}
=  {\sum_{a< i\leqslant\beta^{\ell}} \frac{1}{S_i^n}\mathbbm{1}_{\{\X_i^n=1\}}}+\sum_{k=1+\ell}^{m}\sum_{\beta^{k-1}<i\leqslant\beta^k}\frac{1}{S_i^n}\mathbbm{1}_{\{\X_i^n=1\}} + \sum_{\beta^{m}<i\leqslant b} \frac{1}{S_i^n}\mathbbm{1}_{\{\X_i^n=1\}}
\]
where $\ell \coloneqq \lceil\ln a/\ln\beta\rceil$, $m \coloneqq \lfloor\ln b/\ln\beta\rfloor$ and where, in all the sums over $i$, it is implicit that $i$ is an integer. Let $P_n \coloneqq \{1\leq i \leq n : \X_i=1\}$ and $M_n \coloneqq \{1\leq i \leq n : \X_i=-1\}$. 
Note that for all $1\leq i<j\leq n$, we have
\[
\#(P_n\cap[i+1,j])-\#(M_n\cap[i+1,j]) = S_{j}^n-S_{i}^n, \quad \#(P_n\cap[i+1,j])+\#(M_n\cap[i+1,j])=j-i.
\]
Moreover, when $A_{n}\leq i < j \leq \e n$, we have $(j-i)c-2j \eta_n(\e)\le S_{j}^n-S_{i}^n\le (j-i)c+2j \eta_n(\e)$.
As a result,
\begin{equation}
\label{EqProp9}
\frac{c+1}{2}(j-i) -j \eta_n(\e)\le\#(P_n\cap[i+1,j]) = \frac{S_{j}^{n}-S_{i}^{n}+j-i}{2} \le \frac{c+1}{2}(j-i) +j \eta_n(\e).
\end{equation}
Therefore, {for every $k \in \llbracket 1+ \ell, m \rrbracket$},
\begin{align*}
\sum_{\beta^{k-1}<i\leqslant\beta^k}\frac{1}{S_i^n}\mathbbm{1}_{\{\X_i^n=1\}}
\le \sum_{\beta^{k-1}<i\leq\beta^k}\frac{1}{i(c-\eta_n(\e))}\mathbbm{1}_{\{\X_i^n=1\}} &\le \#\{ \beta^{k-1}<i\le\beta^k : \X_i^n=1\} \frac{1}{\beta^{k-1}(c-\eta_n(\e))} \\
&\le 
\left(\frac{c+1}{2}(\beta^k-\beta^{k-1}{+1})+\beta^k \eta_n(\e)\right) \frac{1}{\beta^{k-1}(c-\eta_n(\e))} \\
& {\leq} \left(\frac{c+1}{2}(\beta-1+{1/A_n})+\beta \eta_n(\e)\right) \frac{1}{(c-\eta_n(\e))} \eqcolon U_n(\e)
\end{align*}
where the third inequality comes from \eqref{EqProp9} {and where {for the fourth inequality} we used that $\beta^{k-1}\geq\beta^{\ell}\geq a\geq A_n$.} {
Similarly, since $\beta^{\ell-1} \leq a$,
$$\sum_{a< i\leqslant\beta^{\ell}} \frac{1}{S_i^n}\mathbbm{1}_{\{\X_i^n=1\}} \leq  \#\{a<i\le\beta^\ell : \X_i^n=1\} \frac{1}{\beta^{\ell-1}(c-\eta_n(\e))} \leq \frac{\beta^{\ell} - \beta^{\ell-1}}{\beta^{\ell-1}(c-\eta_n(\e))},$$
and since $\beta^{m+1} \geq b$,
$$ \sum_{\beta^{m}<i\leqslant b} \frac{1}{S_i^n}\mathbbm{1}_{\{\X_i^n=1\}} \leq   \#\{\beta^{m}<i\le b : \X_i^n=1\} \frac{1}{\beta^{\ell-1}(c-\eta_n(\e))} \leq   \frac{\beta^{m+1} - \beta^{m}}{\beta^{m}(c-\eta_n(\e))}.$$
}
Thus
\[
\sum_{i=a+1}^{b}\frac{1}{S_i^n}\mathbbm{1}_{\{\X_i^n=1\}} \leq (m-\ell)U_n(\e) + \frac{\beta^{\ell} - \beta^{\ell-1}}{\beta^{\ell-1}(c-\eta_n(\e))} + \frac{\beta^{m+1} - \beta^{m}}{\beta^{m}(c-\eta_n(\e))} \leq \frac{\ln(b/a)}{\ln\beta}U_n(\e) + \frac{2(\beta-1)}{c-\eta_n(\e)}.
\]
For the lower bound, the same ideas apply.
Indeed, for every integer $ k \in \llbracket 1 + \ell , m \rrbracket$ we have
\begin{align*}
\sum_{\beta^{k-1}<i\leqslant\beta^k}\frac{1}{S_i^n}\mathbbm{1}_{\{\X_i^n=1\}}
&\geq \#\{\beta^{k-1}<i\le\beta^k : \X_i^n=1\}\frac{1}{\beta^{k}(c+\eta_n(\e))} \\
&\geq \left(\frac{c+1}{2}\left(\beta^k-\beta^{k-1}-2\right)-\beta^k \eta_n(\e)\right) \frac{1}{\beta^{k}(c+\eta_n(\e))} \\
&\geq \left(\frac{c+1}{2}\left(1-\frac{1}{\beta}-\frac{2}{\beta A_n}\right)-\eta_n(\e)\right) \frac{1}{(c+\eta_n(\e))} \eqcolon L_n(\e)
\end{align*}
where the second inequality comes from \eqref{EqProp9} and the third one comes from {from the bound} $\beta^k\geq\beta^{\ell+1}\geq\beta a\geq\beta A_n$. Thus 
\[
\sum_{i=a+1}^{b}\frac{1}{S_i^n}\mathbbm{1}_{\{\X_i^n=1\}} \geq  (m-\ell)L_n(\e) \geq \frac{\ln(b/a)}{\ln\beta}L_n(\e) - 2L_n(\e) \geq\frac{\ln(b/a)}{\ln\beta}L_n(\e) - \frac{c+1}{c}\left(1-\frac{1}{\beta}-\frac{2}{\beta A_n}\right).
\]
{Using the assumption that $aC<b$,} we end up with the following inequalities
\[
\frac{L_n(\e)}{\ln\beta}-\frac{c+1}{c\ln C}\left(1-\frac{1}{\beta}-\frac{2}{\beta A_n}\right) \leq \frac{1}{\ln(b/a)}\sum_{i=a+1}^{b}\frac{1}{S_i^n}\mathbbm{1}_{\{\X_i^n=1\}}\leq \frac{U_n(\e)}{\ln\beta}+\frac{2(\beta-1)}{{(c}-\eta_n(\e))\ln C}.
\]
{Using the fact that $A_{n} \rightarrow\infty$}, notice that $\limsup_{{n \rightarrow \infty}} U_n(\e)\to(\beta-1)(c+1)/(2c)$ and $\liminf_{{n \rightarrow \infty}} L_n(\e)\to(1-1/\beta)(c+1)/(2c)$ {as} $\e\to0$. Therefore
\[
\limsup_{\e \to 0} \limsup_{n\to \infty} \max_{\substack{A_n \leqslant a< b \leqslant \e n\\aC<b}} \left| \frac{1}{\ln(b/a)}\sum_{i=a+1}^b \frac{1}{S_i^n}\mathbbm 1_{\{\X_i^n=1\}} - \frac{{c+1}
}{2c} \right| \leq \frac{{c+1}
}{2c}\left|\frac{\beta-1}{\ln\beta}-1\right|+\frac{c+1}{c\ln C}\left(1-\frac 1\beta\right)+\frac{2(\beta-1)}{c\ln C}.
\]
{This bound tends to $0$ as $\beta \rightarrow1$; since $\beta>1$ was arbitrary, the result follows.}
\end{proof}

\begin{proof}[Proof of Theorem \ref{thm:lineaire} {(I)}]
Let $(A'_n)_{n\in\N}$ be a sequence of positive integers such that $A_n\leq A'_n$, $A'_n=o(\ln n)$ and $A'_n\to\infty$ as $n\to\infty$. Applying Lemma \ref{lem:estimee uniforme sur h_n+} we see that
\[
\lim_{\e\to0}\limsup_{n\to\infty}\left|\frac{1}{\ln n}\sum_{A'_n < i \leqslant \e n} \frac{1}{S_i^n}\mathbbm{1}_{\{\X_i^n=1\}} - \frac{{c+1}
}{2c}\right|=0
\]
Since $S_i^n\geq1$ for all $1\leq i \leq n$ and $A'_n=o(\ln n)$, we have 
\[
\sum_{i=1}^{A'_n} \frac{1}{S_i^n}\mathbbm{1}_{\{\X_i^n=1\}}=o(\ln n).
\]
Finally, for all $0<\e<1$, by the second assumption of \eqref{eq:assumptions},
\[
\sum_{\e n \leqslant i \leqslant n} \frac{1}{S_i^n}\mathbbm{1}_{\{\X_i^n=1\}} = \mathcal{O}(1).
\]
Combining everything gives the desired convergence \eqref{cv hn cas lineaire}.
\end{proof}

\subsection{Distances between uniform vertices}

Here we prove  Theorem \ref{thm:lineaire} {(II)}. We keep the notation introduced in Section \ref{ssec:estimates}: for a vertex $v \in \V_{n}$, let $\birth_{n}(v)$ be the largest $i \in \{0,1, \ldots,n\}$ such that $v$ belongs to the forest $\mathcal{F}^{n}_{i}$ obtained when building $\T^{n}$ using Algorithm \ref{algo2}. We first focus on $\H(V_{1}^n)$ and introduce some notation. Set 
\[
Z_{n} \coloneqq \sum_{i=1}^{\birth_{n}(V_{1}^{n})} Y^{n}_{i} 1_{\{\X^{n}_i=1\}} \qquad \text{and} \qquad M_{n} \coloneqq \sum_{i=1}^{\birth_{n}(V_{1}^{n})}  {\frac{1}{S_i^n}} \mathbbm{1}_{\{\X_i^n=1\}}
\]
where $(Y_i^n)_{1 \leq i \leq n}$ are independent Bernoulli random variables of respective parameters $(1/S^n_i)_{1\leq i\leq n}$ (independent from $\birth_{n}(V_{1}^{n})$). Observe that $M_{n}=\esp{Z_{n} | \birth_{n}(V_{1}^{n})}$.
By \eqref{eq:H0}, we have
\begin{equation}
\label{eq:Hvn} \H(V_{1}^n) \quad \mathop{=}^{(d)} \quad Z_{n}.
\end{equation}
Fix $p\geq 1$. First we establish that $M_n/\h_n^+$ converges to $1$ in $\Lp^p$. Actually, since $0 \leq M_n \leq \h_n^+$ almost surely it is enough to show the convergence in probability.

Let $\e >0$ {be} small enough so that $\eta_n(\e) \le c/2$ for all $n$ large enough. Let $\gamma_\e>0$ {be} such that for all $n$ large enough we have
\[
\gamma_\e \le \min_{\e n \le i \le n} \frac{S^n_i}{n}.
\]
Such a $\gamma_\e$ exists thanks to \eqref{eq:assumptions}. 
Besides, since $A_n = o(\ln n) = o(n)$, by Lemma \ref{lem:bunif} we have $b_n(V_1^n) \geq A_n$ with high probability. Using this, as well as the fact that $\h_n^+ \geq \ln(n/4)$ (see Lemma \ref{lemme minoration h_n^+}), we obtain, with high probability as $n \to \infty$, 
\begin{align*}
0 \leq 1 - \frac{M_n}{\h_n^+} 
= \frac{1}{\h_n^+}\sum_{i=\birth_{n}(V_{1}^{n})}^{n} {\frac{1}{S_i^n}} \mathbbm{1}_{\{\X_i^n=1\}} &\leq \frac{1}{\h_n^+} \sum_{i=\birth_n(V_1^n)}^n \left( \frac{1}{i(c-\eta_n(\e))} + \frac{1}{\gamma_\e n}\right)\mathbbm{1}_{\{\X_i^n=1\}} \\
&\leq \frac{1}{\ln(n/4)} \frac{2}{c} \ln\left(\frac{n}{\birth_n(V_1^n)}\right) + \frac{1}{\gamma_\e \h_n^+}.
\end{align*}
Now, fix $0 < \delta < 1$ and take  $m_{n} = n^{1-\delta}2^\delta$. Then by Lemma \ref{lem:bunif}
\[
\proba{\ln\left(\frac{n}{b_n(V_1^n)}\right) > \delta\ln\left(\frac n2\right)} = \frac{m_{n}+1 - S_{m_{n}}^n}{n+1 + S_n^n} \xrightarrow[n\to\infty]{} 0.
\]
We conclude that $M_n/\h_n^+$ converges towards $1$ in probability (and therefore in $\Lp^p$ for all $p\geq 1$). Now we show that $Z_n/M_n$ converges to $1$ in $\Lp^p$ following the same spirit as in the proof of Theorem \ref{thm:height}. We have
\[
\esp{\left. \left\vert\frac{Z_n}{M_n}-1\right\vert^p~\right\vert~\birth_n(V_1^n)} = p\int_0^{\infty} \proba{\left.\left\vert\frac{Z_n}{M_n}-1\right\vert\geq u~\right\vert~\birth_n(V_1^n)}u^{p-1}du. 
\]
By Bennett's inequality (Proposition \ref{prop:bennett})
\[
\proba{\left.\left\vert\frac{Z_n}{M_n}-1\right\vert\geq u~\right\vert~\birth_n(V_1^n)} \leq 2\exp\left(-M_ng(u)\right)
\]
where $g(u) = (u+1)\ln(u+1)-u$. Since $M_n$ is of order $\ln n$ it is a simple matter to check that
\[
\E\,\left\vert\frac{Z_n}{M_n}-1\right\vert^p \leq 2p\esp{\int_0^{\infty} \exp\left(-M_ng(u)\right)u^{p-1}du} \xrightarrow[n\to\infty]{} 0.
\]
Since $M_n \leq \h_n^+$ we deduce that
\[
\E\,\left\vert\frac{Z_n-M_n}{\h_n^+}\right\vert^p \leq \E\,\left\vert\frac{Z_n}{M_n}-1\right\vert^p \xrightarrow[n\to\infty]{} 0.
\]
So $(Z_n-M_n)/\h_n^+$ converges to $0$ in $\Lp^p$. Since $M_n/\h_n^+$ converges to $1$ in $\Lp^p$, $Z_n/\h_n^+$ goes to $1$ as well. Finally, using Theorem \ref{thm:lineaire} {(I)}, we conclude that $Z_n/ \ln n$ converges to $(c+1)/(2c)$ in $\Lp^p$. This proves the first convergence of Theorem  \ref{thm:lineaire} {(II)}.

So as to deduce the asymptotic behaviour of the distance between two uniform points, we next show that two uniform random vertices coalesce near the root.

\begin{lemma}\label{lemme ça coalesce en zéro}
	{Under the assumptions of Theorem \ref{thm:lineaire} the following holds}. For all $n \ge 1$, let $V^n_1, V_2^n$ be two independent uniform vertices of $\V_{n}$. Then
	\[
	\frac{\coal_{n}(V^{n}_{1},V^{n}_{2})}{n}\cvproba[n] 0.
	\]
\end{lemma}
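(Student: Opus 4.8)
The plan is to reduce the statement to the explicit description of the law of coalescence times in Lemma~\ref{loi du temps de coalescence}, and then to use only the second part of the standing assumptions~\eqref{eq:assumptions}, which guarantees that $S^n_i$ is of order $n$ whenever $i$ is of order $n$ (the first part of \eqref{eq:assumptions}, concerning small indices $i$, will not be needed here).

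First I would fix $\e\in(0,1)$ and note that it suffices to prove $\proba{\coal_n(V^n_1,V^n_2)\ge \e n}\to 0$. Since $\#\V_n=N_n\ge (n+2)/2\to\infty$, the event $\{V^n_1=V^n_2\}$ has probability $1/\#\V_n\to 0$, so I may restrict to $\{V^n_1\neq V^n_2\}$ and condition on $(V^n_1,V^n_2)=(u,v)$ with $u\neq v$ fixed, using that $(V^n_1,V^n_2)$ is a uniform pair of elements of the \emph{deterministic} set $\V_n$, independent of the randomness of Algorithm~\ref{algo2}. For such $u\neq v$ one has $\coal_n(u,v)<\birth_n(u)\wedge\birth_n(v)$: indeed, at time $\birth_n(u)\wedge\birth_n(v)$ one of the two vertices appears as an isolated one-vertex tree in Algorithm~\ref{algo2}, hence is not yet in the same tree as the other. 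Thus Lemma~\ref{loi du temps de coalescence} describes $\proba{\coal_n(u,v)=c}$ for every value $c$ that $\coal_n(u,v)$ can take.

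Next I would sum that formula over $c\ge \lceil\e n\rceil$ and telescope. Writing $b=\birth_n(u)\wedge\birth_n(v)$ and observing that, for $\X^n_{c+1}=1$, the quantity $\frac{1}{\binom{S^n_{c+1}}{2}}\prod_{i=c+2,\,\X^n_i=1}^{b}\bigl(1-\tfrac{1}{\binom{S^n_i}{2}}\bigr)$ is exactly the difference of two consecutive partial products, one obtains the identity
\[
\proba{\coal_n(u,v)\ge \lceil\e n\rceil}
=1-\prod_{\substack{i=\lceil\e n\rceil+1\\ \X^n_i=1}}^{\,\birth_n(u)\wedge\birth_n(v)}\left(1-\frac{1}{\binom{S^n_i}{2}}\right)
\le\sum_{i=\lceil\e n\rceil+1}^{n}\frac{1}{\binom{S^n_i}{2}},
\]
where I used $1-\prod_j(1-x_j)\le\sum_j x_j$ for $x_j\in[0,1]$ and $\birth_n(u)\wedge\birth_n(v)\le n$; the right-hand side no longer depends on $u,v$. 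Finally, by the second assumption in~\eqref{eq:assumptions} there exist $\gamma_\e>0$ and $n_0$ with $S^n_i\ge \gamma_\e n$ for all $n\ge n_0$ and all $\e n\le i\le n$, so $\binom{S^n_i}{2}\ge \gamma_\e^2 n^2/4$ for $n$ large, and the sum above is at most $4/(\gamma_\e^2 n)\to 0$. Combining this uniform bound with $\proba{V^n_1=V^n_2}\to 0$ gives $\proba{\coal_n(V^n_1,V^n_2)\ge\e n}\to 0$, which is the claim. I do not expect a real obstacle here: the only points requiring some care are the telescoping bookkeeping for Lemma~\ref{loi du temps de coalescence} (in particular the boundary cases where a partial product is empty, where the displayed identity still holds trivially) and making sure the assumption is applied precisely on the range where $i$ is of order $n$, rather than on the small-$i$ range governed by the first part of~\eqref{eq:assumptions}.
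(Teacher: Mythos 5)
Your proof is correct and uses essentially the same strategy as the paper: invoke Lemma~\ref{loi du temps de coalescence}, reduce the tail probability to the deterministic bound $\sum_{i>\e n} 1/\binom{S^n_i}{2}$ uniformly over the choice of the two vertices, and conclude with the second part of assumptions~\eqref{eq:assumptions}. The only cosmetic difference is that you obtain the bound by an exact telescoping identity followed by the elementary inequality $1-\prod_j(1-x_j)\le\sum_j x_j$, whereas the paper bounds each term $\proba{\coal_n(u,v)=k\mid B}\leq\binom{S^n_{k+1}}{2}^{-1}$ directly before summing; you also dispatch the degenerate case $V^n_1=V^n_2$ explicitly, which the paper leaves implicit, but both arguments land on the same $\mathcal O(1/n)$ estimate.
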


\begin{proof}
    Let $0 < \e < 1$. Let $i,j \leq n-1$ distinct with $\X^n_{i+1} = \X^n_{j+1} = -1$ and $k \leq i\wedge j$ with $\X^n_{k+1} = 1$. Denote by $B$ the event ``$\birth_n(V^n_1)=i$ and $\birth_n(V^n_2)=j$'' and by $C$ the event ``$\coal_n(V^n_1,V^n_2) = k$''. By Lemma \ref{loi du temps de coalescence}, it is clear that
    \[
    \proba{C \,\vert\, B} = \proba{C \text{ and } \coal_n(V^n_1,V^n_2) \leq k \,\vert\, B} \leq \proba{C \,\vert\, \coal_n(V^n_1,V^n_2)\leq k, B} = \binom{S_{k+1}^n}{2}^{-1}.
    \]
    Therefore
    \[
    \proba{\coal_n(V^n_1,V^n_2) \ge \e n \,\vert\, B} \leq \sum_{k=\lceil \e n \rceil}^{i\wedge j} \binom{S_{k+1}^n}{2}^{-1}\mathbbm{1}_{\{\X_{k+1}^n=1\}} \leq \sum_{k=\lceil \e n \rceil}^{n-1} \binom{S_{k+1}^n}{2}^{-1}\mathbbm{1}_{\{\X_{k+1}^n=1\}}.
    \]
    {By averaging over $i$ and $j$ it follows that}
    \[
    \proba{\coal_n(V^n_1,V^n_2) \ge \e n} \leq \sum_{k=\lceil \e n \rceil}^{n -1} \binom{S_{k+1}^n}{2}^{-1}\mathbbm{1}_{\{\X_{k+1}^n=1\}}.
    \]
	{Finally}, by the second hypothesis of \eqref{eq:assumptions}, we know that 
	\[
	\sum_{k=\lceil \e n \rceil}^{n-1} \binom{S_{k+1}^n}{2}^{-1}\mathbbm{1}_{\{\X_{k+1}^n=1\}} =\mathcal{O}\left(\frac{1}{n} \right)
	\qquad
	\text{as}
	\qquad
	n\to \infty,
	\]
	hence the desired result.
\end{proof}

The second convergence of the point {(II)} of Theorem \ref{thm:lineaire} is then an immediate consequence of the first convergence of the same point using  dominated convergence and Lemma \ref{lemme ça coalesce en zéro}.

\subsection{Limit theorem for the height}
\label{ssec:limitheight}

Here we show  Theorem \ref{thm:lineaire} {(III)}. The proof of the upper bound is readily obtained by a union bound, while the lower bound is much more delicate.

\begin{proof}[Proof of the upper bound in Theorem \ref{thm:lineaire} {(III)}] Fix $\eta>0$.
First observe that by definition of $f(c)$ we have
\[
g\left(f(c)-1\right)=f(c)\ln f(c)-f(c)+1=1+\frac{c-1}{{c+1}
}=\frac{2c}{{c+1}
},
\]
so that $g\left(f(c)-1+\eta\right)>{2c}/({c+1}
)$. By Theorem \ref{thm:lineaire} {(I)} we have $\h^+_n\sim ({c+1}
)/(2c){\ln}(n)$  as $n\to \infty$. As a consequence, there exists $\delta \in (0,1)$ such that $h^+_n\,g\left(f(c)-1+\eta\right) \geq (1+\delta) \ln(n)$ for $n$ sufficiently large.
Hence, by Proposition \ref{prop:bounds}, for $n$ sufficiently large,
\[
\proba{\Height(\T^n)>\h^+_n(f(c)+\eta)} \leq \exp\left(-\h^+_n\,g\left(f(c)-1+\eta\right)+\ln n\right) \leq \exp(-\delta \ln(n))  \quad \mathop{\longrightarrow}_{n \rightarrow \infty} \quad 0.
\]
This completes the proof of the upper bound.
\end{proof}

It therefore remains to show the lower bound on $\Height(\T_{n})$, which is the delicate part of the proof. Let us explain the main idea of our approach. Since the upper bound has been obtained by using a union bound over all vertices, one would hope to obtain the lower bound from the fact that the height of the vertices are "almost independent". However, their heights are highly correlated. To overcome this issue, we use the so-called chaining technique and estimate by induction on $k$ the height of the subtrees $\Height(\T_{k}(\Xb^n))$. More precisely, for every increasing sequence of integers (chain) $0=R_0<R_1<\dots < R_m=n$, we have
\[ \Height(\T_{n})=\sum_{0\leq i <m} \Height(\T_{R_{i+1}}(\Xb^n))- \Height(\T_{R_i}(\Xb^n)).\]
We shall choose an appropriate chain of integers $(R_i)_{0\leq i \leq m}$ and  then bound the differences appearing in the last display. To this end, the main idea is that if $R_{i+1}/R_i$ is close to some large constant $C>0$, then $\T_{R_{i+1}}(\Xb^n)$ can be approximated by a forest of independent trees attached on 
$\T_{R_i}(\Xb^n)$. We then show by induction on $i$ that many vertices have a large height in this forest, and among them many are attached on vertices of $\T_{R_i}(\Xb^n)$ with large height. 

\begin{figure}[h!]
\begin{center}
\begin{tikzpicture}[scale = 0.3]
\draw[->] (0,0) -- (0,6.5);
\draw[->] (0,0) -- (13.5,0);

\node[above,font=\footnotesize] at (0,6.5) {$S_i^n$};
\node[right,font=\footnotesize] at (13.5,0) {$i$};

\node[draw,circle,fill,inner sep = 1pt] at (0,1) {};
\node[draw,circle,fill,inner sep = 1pt] at (1,2) {};
\node[draw,circle,fill,inner sep = 1pt] at (2,3) {};
\node[draw,circle,fill,inner sep = 1pt] at (3,4) {};
\node[draw,circle,fill,inner sep = 1pt] at (4,3) {};
\node[draw,circle,fill,inner sep = 1pt] at (5,4) {};
\node[draw,circle,fill,inner sep = 1pt] at (6,5) {};
\node[draw,circle,fill,inner sep = 1pt] at (7,4) {};
\node[draw,circle,fill,inner sep = 1pt] at (8,3) {};
\node[draw,circle,fill,inner sep = 1pt] at (9,4) {};
\node[draw,circle,fill,inner sep = 1pt] at (10,5) {};
\node[draw,circle,fill,inner sep = 1pt] at (11,4) {};
\node[draw,circle,fill,inner sep = 1pt] at (12,5) {};
\node[draw,circle,fill,inner sep = 1pt] at (13,6) {};

\draw (0,1)--(1,2)--(2,3)--(3,4)--(4,3)--(5,4)--(6,5)--(7,4)--(8,3)--(9,4)--(10,5)--(11,4)--(12,5)--(13,6);

\node[left,font=\footnotesize] at (0,0) {0};
\node[left,font=\footnotesize] at (0,1) {1};
\node[left,font=\footnotesize] at (0,2) {2};
\node[left,font=\footnotesize] at (0,3) {3};
\node[left,font=\footnotesize] at (0,4) {4};
\node[left,font=\footnotesize] at (0,5) {5};
\node[left,font=\footnotesize] at (0,6) {6};

\node[below,font=\footnotesize] at (0,0) {0};
\node[below,font=\footnotesize] at (1,0) {1};
\node[below,font=\footnotesize] at (2,0) {2};
\node[below,font=\footnotesize] at (3,0) {3};
\node[below,font=\footnotesize] at (4,0) {4};
\node[below,font=\footnotesize] at (5,0) {5};
\node[below,font=\footnotesize] at (6,0) {6};
\node[below,font=\footnotesize] at (7,0) {7};
\node[below,font=\footnotesize] at (8,0) {8};
\node[below,font=\footnotesize] at (9,0) {9};
\node[below,font=\footnotesize] at (10,0) {10};
\node[below,font=\footnotesize] at (11,0) {11};
\node[below,font=\footnotesize] at (12,0) {12};
\node[below,font=\footnotesize] at (13,0) {13};

\draw[dotted] (0,4)--(7,4)--(7,0);
\end{tikzpicture}
\hspace{0.5em}
\begin{tikzpicture}[scale = 0.65,
sommet/.style = {draw,circle, font=\scriptsize,inner sep=0,minimum size=15pt},
gele/.style = {fill=cyan},
etiquete/.style = {font = \scriptsize}]

\node[sommet] (0) at (0,0) {$a$};
\node[sommet] (1) at (-1,1.5) {$a$};
\node[sommet] (2) at (0.9,1.2) {$a$};
\node[sommet,gele] (3) at (-1.7,3) {4};
\node[sommet] (4) at (-0.7,3) {$a$};
\node[sommet,gele] (5) at (1.2,2.8) {7};

\draw (0) --node[left,etiquete] {1} (1);
\draw (0) --node[right,etiquete] {3} (2);
\draw (1) --node[left,etiquete] {2} (3);
\draw (1) --node[right,etiquete] {6} (4);
\draw (2) --node[left,etiquete] {5} (5);

\node[etiquete] (t) at (0,-0.8) {$\T^{n}_{7}$};

\end{tikzpicture}
\hspace{0.5em}
\begin{tikzpicture}[scale = 0.65,
sommet/.style = {draw,circle, font=\scriptsize,inner sep=0,minimum size=15pt},
gele/.style = {fill=cyan},
etiquete/.style = {font = \scriptsize}]

\node[sommet] (0) at (0,0) {$a$};
\node[sommet,gele] (1) at (-1,1.5) {8};
\node[sommet] (2) at (0.9,1.2) {$a$};
\node[sommet,gele] (3) at (-1.7,3) {4};
\node[sommet] (4) at (-0.7,3) {$a$};
\node[sommet,gele] (5) at (1.2,2.8) {7};
\node[sommet] (6) at (2.2,2.4) {$a$};
\node[sommet,gele] (7) at (-0.7,4.5) {11};
\node[sommet] (8) at (0.3,4.4) {$a$};
\node[sommet] (9) at (2.2,4) {$a$};

\draw (0) --node[left,etiquete] {1} (1);
\draw (0) --node[right,etiquete] {3} (2);
\draw (1) --node[left,etiquete] {2} (3);
\draw (1) --node[right,etiquete] {6} (4);
\draw (2) --node[left,etiquete] {5} (5);
\draw (2) --node[right,etiquete] {10} (6);
\draw (4) --node[left,etiquete] {9} (7);
\draw (4) --node[right,etiquete] {12} (8);
\draw (6) --node[right,etiquete] {13} (9);

\node[etiquete] (t) at (0,-0.8) {$\T^{n}_{13}$};

\end{tikzpicture}
\hspace{0.5em}
\begin{tikzpicture}[scale = 0.65,
sommet/.style = {draw,circle, font=\scriptsize,inner sep=0,minimum size=15pt},
gele/.style = {fill=cyan},
etiquete/.style = {font = \scriptsize}]

\node[sommet] (0) at (0,0) {$\star$};
\node[sommet] (1) at (-1,1.5) {$\star$};
\node[sommet] (2) at (0.9,1.2) {$\star$};
\node[sommet] (4) at (-0.7,3) {$\star$};
\node[sommet] (6) at (2.2,2.4) {$a$};
\node[sommet,gele] (7) at (-0.7,4.5) {11};
\node[sommet] (8) at (0.3,4.4) {$a$};
\node[sommet] (9) at (2.2,4) {$a$};

\draw (2) --node[right,etiquete] {10} (6);
\draw (4) --node[left,etiquete] {9} (7);
\draw (4) --node[right,etiquete] {12} (8);
\draw (6) --node[right,etiquete] {13} (9);

\node[etiquete] (f) at (0,-0.8) {$\F_{7,13}^n$};

\end{tikzpicture}
\caption{Illustration of the construction of $\F_{k,k'}^n$ with $k=7$ and $k'=13$. From left to right: the sequence $\Xb^n$, a  realization of $\T_{k}^n$,  a  realization of $\T_{k'}^n$  and  the forest $\F_{k,k'}^n$ composed of four trees all rooted at their unique vertex labelled $\star$. {If one matches and merges uniformly at random every $\star$ vertex of  $\F_{k,k'}^n$ with every active vertex of $\T_{k}^n$, one obtains a tree having the same distribution as $\T_{k'}^n$.}}
\label{fig chainage}
\end{center}
\end{figure}

To make this intuition rigorous we first need to define some objects. For $n\in \N$ and $0\leq k \leq n$, recall the definition of $\T_k^n\coloneqq\T_k(\Xb^n)$ from Algorithm \ref{algo1}. We denote by $\mathcal A_k^n$  the set of all vertices in $\T_n^n$ that {were active at time $k$ when running Algorithm \ref{algo1} (these are vertices of ${\T_{n}^{n}}$ that are either labelled with $a$   {and have an adjacent edge labelled by $j \leq k$}, or labelled with $i > k$  and have an adjacent edge labelled by $j \leq k$)}.
For $0\leq k\leq k'\leq n$, let $\F_{k,k'}^n$ be the forest obtained as follows: 

\begin{itemize}[topsep=0pt,itemsep=-1ex,partopsep=1ex,parsep=1ex]
\item[(i)] {Consider $\T_{k'}^n$ and relabel by $\star$  all the vertices of $\T_{k'}^n$ belonging to $\mathcal A_{k}^n$ (see Fig.~\ref{fig chainage} for an example);}
\item[(ii)] then remove all the edges of $\T_{k'}^n$ that belongs to $\T_{k}^n$ (i.e. all the edges with labels in $\llbracket1,k\rrbracket$);
\item[(iii)] finally remove all the vertices with labels in $\llbracket1,k\rrbracket$. 
\end{itemize}
{Note that all the trees in $\F_{k,k'}^n$ have exactly one vertex labelled $\star$  {(see Fig.~\ref{fig chainage} for an example), which by convention is considered to be active}. We will thus consider the vertices labelled $\star$ as the roots of the trees in $\F^{n}_{k,k'}$. Observe that one cannot  deterministically reconstruct $\T_{k'}^n$ from $\{\F_{k,k'}^n,\T_k^n\}$, but in distribution it is possible: if one merges uniformly at random every active vertex of $\T_{k'}^n$ with one vertex labelled $\star$ from $\F^{n}_{k,k'}$, then one obtains a tree having the same distribution as  $\T_{k'}^n$.} {This comes from the fact that conditionally given  $\T_{k}^n$, at any future time the subtrees grafted on active vertices of  $\T_{k}^n$ after time $k$ are exchangeable}. Furthermore, it will be crucial to keep in mind that that $\T_k^n$ and $\F^{n}_{k,k'}$ are independent. Indeed $\F^{n}_{k,k'}$ may be constructed following Algorithm \ref{algo1}, independently of the steps $1\leq i \leq k$.

For every $0\leq k\leq k'\leq n$, for every vertex $v\in \F^{n}_{k,n}$ we write $H^n_{k,k'}(v)$ for the height of $v$ in the forest $\F^{n}_{k,n}$ minus its height in $\F^n_{k',n}$ (where, by convention, the height of $v$ in $\F^n_{k',n}$ is $0$ if $v$ is not in this forest).  In words, if $v$ belongs to a certain tree $\tau$ of  $\F^n_{k',n}$, the quantity $H^n_{k,k'}(v)$ represents the height in $\F^{n}_{k,k'}$  of the vertex associated with the root $\tau$. 

For $\eta>0$, for every integers $n,k,C\in\N$ such that $C^k\leq n/C$, set 
\[ 
N_k^n(C,\eta)= \# \left\{v\in \mathcal A_{C^{k+1}}^n : H^n_{C^k,C^{k+1}}(v) > \frac{{c+1}
}{2c}\ln(C)(f(c)-\eta) \right\}. 
\]
The set of vertices involved in the definition of $N_{k}^{n}$ will play an important role in our approach. {Roughly speaking, they correspond to  vertices active at time $C^{k+1}$ which are ``quite far'' from a vertex active at time $C^{k}$. The first main input in the proof of the lower bound of the height is the following result, which shows that $ C>0$ can be chosen such that if $R_{i+1}/R_i$ is close to $C>0$, then there are many active vertices with a ``large'' height in the forest $\F_{R_{i},R_{i+1}}^n$. Its proof is deferred to Section \ref{ssec:lem}.}

\begin{lemma} 
\label{Forestier} 
For every $\eta \in (0,1)$, there exists an integer ${C  \geq 2}$, there exist $\lambda>0$  {and $\e \in (0,1)$ such that} for every $n$ large enough, for every $k\in \N$, with $A_n \leq C^k\leq \e n/C$, we have
\[ 
{\proba{N_k^n(C,\eta) \leq  4 C^k} \leq   \frac{\lambda}{C^{k}}
}\]
\end{lemma}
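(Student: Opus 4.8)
The plan is to prove Lemma \ref{Forestier} by a first- and second-moment estimate for $N_k:=N^n_k(C,\eta)$. Note first that \eqref{eq:assumptions} forces $A_n\to\infty$: as $S^n_i\in\Z$, one has $|S^n_i/i-c|\ge\mathrm{dist}(ci,\Z)/i$, which cannot stay small while $i$ ranges over a bounded set; hence $\min_{A_n\le i\le\e n}S^n_i\to\infty$ and all Bernoulli parameters occurring below are uniformly $o(1)$. The forest $\F^n_{C^k,C^{k+1}}$ is obtained by running Algorithm \ref{algo1} over the steps $C^k+1,\dots,C^{k+1}$ from the vertices of $\mathcal A^n_{C^k}$ (taken as roots), so it is independent of the first $C^k$ steps; running the time-reversed Algorithm \ref{algo2} on this window as in Section \ref{ssec:estimates} gives the window-analogues of \eqref{eq:H0} and of Lemmas \ref{lem:accroissementH} and \ref{loi du temps de coalescence}. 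In particular, for every $v\in\mathcal A^n_{C^{k+1}}$,
\[
H^n_{C^k,C^{k+1}}(v)\ \overset{(d)}{=}\ \sum_{i=C^k+1}^{C^{k+1}}Y_i\1_{\{\X^n_i=1\}},\qquad (Y_i)\ \text{independent, }Y_i\sim\mathrm{Bernoulli}(1/S^n_i),
\]
and for $v\neq w$ the coalescence time $s:=\coal_n(v,w)$ within the window follows the analogue of Lemma \ref{loi du temps de coalescence}. Set $\theta:=\tfrac{1+c}{2c}\ln C\,(f(c)-\eta)$, $A_v:=\{H^n_{C^k,C^{k+1}}(v)>\theta\}$, $p:=\proba{A_v}$ (the same for all $v\in\mathcal A^n_{C^{k+1}}$), $\mu:=\esp{H^n_{C^k,C^{k+1}}(v)}$ and $m_k:=\esp{N_k}$. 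By Lemma \ref{lem:estimee uniforme sur h_n+}, $\mu=\tfrac{1+c}{2c}\ln C\,(1+o(1))$, and by the first half of \eqref{eq:assumptions}, $\#\mathcal A^n_{C^{k+1}}=S^n_{C^{k+1}}=c\,C^{k+1}(1+o(1))$, both uniformly over the admissible $k$ (this is where $C$ is taken large and then $\e$ small); thus $m_k=S^n_{C^{k+1}}\,p$. Finally recall that $g(f(c)-1)=f(c)\ln f(c)-f(c)+1=\tfrac{2c}{1+c}$, as in the upper bound proof.

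It suffices to establish: (a) $p\ge C^{-1+\delta}$ for some $\delta=\delta(\eta)>0$ and all $C,n$ large, uniformly in $k$; and (b) $\var(N_k)\le m_k+\Lambda(C)\,m_k^2/C^k$ for a constant $\Lambda(C)$. Indeed (a) gives $m_k\ge c'C^{k+\delta}\ge 8C^k$ for $C$ large, so Chebyshev's inequality and (b) yield $\proba{N_k\le 4C^k}\le 4\var(N_k)/m_k^2\le 4/m_k+4\Lambda(C)/C^k\le\lambda/C^k$ for a suitable $\lambda$.

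\emph{Proof of (a).} I would use exponential tilting. Write $q_i=1/S^n_i$ and choose $\lambda>0$ so that the tilted parameters $\widetilde q_i=q_ie^\lambda/(1-q_i+q_ie^\lambda)$ satisfy $\sum_i\widetilde q_i=\lceil\theta\rceil$; the change of measure gives $p\ge\proba{\sum_iY_i=\lceil\theta\rceil}=e^{-\Lambda^*(\lceil\theta\rceil)}\,\probatilde{\sum_iY_i=\lceil\theta\rceil}$, where $\Lambda^*$ is the associated Legendre transform and $\probatilde{\cdot}$ the tilted law. Since the $q_i$ are uniformly $o(1)$, $\Lambda^*(\lceil\theta\rceil)=\mu\,g(\theta/\mu-1)(1+o(1))=\mu\,g(f(c)-\eta-1)(1+o(1))$; and under $\probatilde{\cdot}$ the sum $\sum_iY_i$ is an integer-valued log-concave variable of variance $\le\lceil\theta\rceil\lesssim\ln C$, so its mode carries mass $\ge c''/\sqrt{\ln C}=C^{-o(1)}$. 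Hence $p\ge\exp(-\mu\,g(f(c)-\eta-1))\,C^{-o(1)}$. As $g$ is strictly increasing and $f(c)-1>0$, there is $\delta_0=\delta_0(\eta)>0$ with $g(f(c)-\eta-1)\le(1-\delta_0)\,g(f(c)-1)$, so with $\mu\sim\tfrac{1+c}{2c}\ln C$ and $g(f(c)-1)=\tfrac{2c}{1+c}$ we get $\mu\,g(f(c)-\eta-1)\le(1-\delta_0)\ln C\,(1+o(1))$, and therefore $p\ge C^{-1+\delta}$ with $\delta=\delta_0/2$, uniformly in $k$.

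\emph{Proof of (b), and the main obstacle.} Fix $v\neq w$ in $\mathcal A^n_{C^{k+1}}$ and condition on $s=\coal_n(v,w)$. From the window analogue of Lemma \ref{loi du temps de coalescence} and $S^n_i\asymp ci$, one has $\proba{s>C^k}\le\sum_{i=C^k+1}^{C^{k+1}}\binom{S^n_i}{2}^{-1}\1_{\{\X^n_i=1\}}=O(1/C^k)$, uniformly in $k$. On $\{s=C^k\}$ the vertices $v,w$ sit in distinct root-subtrees of $\F^n_{C^k,C^{k+1}}$, and the crux is to show $\proba{A_v\cap A_w\mid s=C^k}\le p^2(1+O(1/C^k))$ — i.e.\ two vertices in distinct root-subtrees have essentially independent heights. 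This should follow from a negative-dependence feature of Algorithm \ref{algo2} on the window: conditionally on $s=C^k$ the trees carrying $v$ and $w$ never merge, so no merge step raises both $H^n_{C^k,C^{k+1}}(v)$ and $H^n_{C^k,C^{k+1}}(w)$, the increment pairs then form a negatively associated array, and the two heights are negatively associated. Making this conditioning rigorous is the main obstacle of the whole lemma. For a general $s$, the analogous decomposition $H^n_{C^k,C^{k+1}}(v)=L+R_v$, $H^n_{C^k,C^{k+1}}(w)=L+R_w$ (shared height $L$ of the branch vertex, conditionally negatively associated residual heights $R_v,R_w$), combined with Bennett's inequality (Proposition \ref{prop:bennett}) and the elementary fact that $r\in(0,1]\mapsto r\,g(x/r)$ is non-increasing — so the shared part never improves on the single-vertex rate — gives $\proba{A_v\cap A_w\mid s}\lesssim p^2\,C^{o(1)}$; but for $\{s>C^k\}$ the trivial bound $\proba{A_v\cap A_w\mid s>C^k}\le p$ already suffices. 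Altogether $\cov(\1_{A_v},\1_{A_w})=\proba{A_v\cap A_w}-p^2\le O(p^2/C^k)+\proba{s>C^k}\,p=O(p/C^k)$, and summing over the $O(C^{2k+2})$ pairs (plus the diagonal term $\le m_k$) gives $\var(N_k)\le m_k+\Lambda(C)\,C^{k+2}p$; since $m_k^2\asymp C^{2k+2}p^2$ and $C^kp\ge C^{k-1+\delta}$ this is $\le m_k+\Lambda'(C)\,m_k^2/C^k$, which is (b). The only genuinely delicate point is the negative-dependence estimate $\proba{A_v\cap A_w\mid s=C^k}\lesssim p^2$; everything else is a routine mix of exponential tilting, Bennett's inequality and Lemma \ref{lem:estimee uniforme sur h_n+}.
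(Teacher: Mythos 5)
Your proposal correctly identifies the paper's overall strategy — a second-moment method: a lower bound on $\E[N_k^n]$ of order $C^k$, a variance bound $\var(N_k^n)=O(C^k)$, and Chebyshev. The first-moment part takes a genuinely different route: you use exponential tilting plus a local-CLT argument to show $p\ge C^{-1+\delta}$, whereas the paper (Lemma~\ref{lem:minoration}) compares the window height to a Poisson variable in total variation via~\cite{BH84} and then uses an elementary Poisson lower tail bound. Either route should work, and the paper's is somewhat more elementary to carry out; yours is more self-contained but needs the log-concavity/local-CLT step to be spelled out carefully, especially uniformly in $k$ and $n$.

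The gap is in the variance bound, and you flag it yourself: the claim $\proba{A_v\cap A_w\mid \coal_n(v,w)<C^k}\le p^2\bigl(1+O(1/C^k)\bigr)$ is asserted via "the increment pairs form a negatively associated array" without proof, and you write that "making this conditioning rigorous is the main obstacle of the whole lemma." The paper resolves exactly this point with an explicit coupling: conditionally on not coalescing in the window, one writes $H^n_{C^k,C^{k+1}}(w)=\sum_i Z^n_i$ with $Z^n_i = Y^n_i\1_{\coal_n(v,w)\ge i}+\xi^n_i\1_{\coal_n(v,w)<i,\,Y^n_i=0}$, where $(\xi^n_i)$ are independent Bernoullis of parameter $(1/(S^n_i-1))\1_{\X^n_i=1}$, independent of $(Y^n_i)$. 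Then $\P(A_v\cap A_w,\ \coal<C^k)\le \P(\sum Y^n_i>t)\P(\sum\xi^n_i>t)\le \P(\sum\xi^n_i>t)^2$, and the discrepancy between $\P(\sum\xi^n_i>t)$ and $\P(\sum Y^n_i>t)=p$ is controlled directly by the first-moment bound $\sum_i 1/(S^n_i(S^n_i-1))=O(1/C^k)$, via a further monotone coupling $\widetilde Y^n_i\le\xi^n_i$. That explicit construction is what your sketch is missing. A secondary inaccuracy: you bound $\proba{A_v\cap A_w\mid s>C^k}\le p$, which has no justification (conditionally on coalescing inside the window the two events can be strongly positively correlated); what you actually have is $\proba{A_v\cap A_w,\,s>C^k}\le\proba{s>C^k}=O(1/C^k)$, which is still enough for $\var(N_k^n)=O(C^k)$, but the cleaner route is the paper's: isolate the window-coalescence event first with probability $O(1/C^k)$ and then deal with the complementary event by the coupling.
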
 

For every $n,k\in \N$, with $C^k\leq n/C$, let $M^{n}_{k}$ be the maximal number of active vertices of a tree in the forest $\F^{n}_{C^{k},C^{k+1}}$.  {The second main input in the proof of the lower bound of the height is the following result, which shows that the this quantity cannot be too large. Its proof is deferred to Section \ref{ssec:lem}.}

\begin{lemma}
\label{Forestjump}
For every $n$ large enough, for every $k\in \N$, with $A_n \leq C^k\leq  n/C$,
\[ \proba{M^{n}_{k}> k^3}\leq \frac{1}{k^{2}}. \]
\end{lemma}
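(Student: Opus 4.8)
Write $k_1:=C^k$ and $k_2:=C^{k+1}=Ck_1$, and recall that $\F^n_{k_1,k_2}$ may be constructed by running Algorithm~\ref{algo1} over the steps $i=k_1+1,\dots,k_2$ starting from the $S^n_{k_1}$ active vertices of $\T^n_{k_1}$, which become its $\star$-roots. Since Algorithm~\ref{algo1} never merges components, $\F^n_{k_1,k_2}$ consists of exactly $S^n_{k_1}\le 1+k_1\le 2C^k$ trees, one rooted at each $\star$-vertex. Fix such a root $v$ and, for $k_1\le i\le k_2$, let $W^{(v)}_i$ be the number of vertices of the component of $v$ (at step $i$) that are active at step $i$. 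Then $W^{(v)}_{k_1}=1$; moreover at step $i$, if $\X^n_i=1$ the new active vertex falls into the component of $v$ with probability $W^{(v)}_{i-1}/S^n_{i-1}$, so $W^{(v)}_i-W^{(v)}_{i-1}$ is a Bernoulli variable with parameter $W^{(v)}_{i-1}/S^n_{i-1}$, while if $\X^n_i=-1$ then $W^{(v)}_{i-1}-W^{(v)}_i$ is Bernoulli with that same parameter. Since the number of $a$-labelled vertices of the tree of $v$ in $\F^n_{k_1,k_2}$ is at most $W^{(v)}_{k_2}$, we get $M^n_k\le\max_v W^{(v)}_{k_2}$. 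The plan is thus to establish an exponential tail for a single $W^{(v)}_{k_2}$ and then take a union bound over the at most $2C^k$ roots.

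The heart of the matter is an exact recursion for the rising-factorial moments $\phi_r(w):=\binom{w+r-1}{r}$, $r\ge1$. From $\phi_r(w+1)-\phi_r(w)=\binom{w+r-1}{r-1}$ and $w\binom{w+r-1}{r-1}=r\,\phi_r(w)$, and since $\phi_r$ is nondecreasing, one gets, conditionally on $W^{(v)}_{i-1}$,
\[
\esp{\phi_r(W^{(v)}_i)\mid W^{(v)}_{i-1}}=\Bigl(1+\tfrac{r}{S^n_{i-1}}\Bigr)\phi_r(W^{(v)}_{i-1})
\]
when $\X^n_i=1$, and $\esp{\phi_r(W^{(v)}_i)\mid W^{(v)}_{i-1}}\le\phi_r(W^{(v)}_{i-1})$ when $\X^n_i=-1$. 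Iterating down to step $k_1$ and using $\phi_r(1)=1$ yields
\[
\esp{\binom{W^{(v)}_{k_2}+r-1}{r}}\le\prod_{\substack{k_1<i\le k_2\\ \X^n_i=1}}\Bigl(1+\frac{r}{S^n_{i-1}}\Bigr)\le\exp\!\bigl(r\,\Sigma_k\bigr),\qquad \Sigma_k:=\sum_{\substack{k_1<i\le k_2\\ \X^n_i=1}}\frac{1}{S^n_{i-1}}.
\]

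Next I would check that $\Sigma_k\le L$ for a constant $L$ depending only on $c$ and $C$, uniformly over $n$ large and over $k$ with $A_n\le C^k\le n/C$. Fixing a small $\e_0$ and splitting the sum at $i=\e_0 n$: for $i-1\in[A_n,\e_0 n]$ the first condition in \eqref{eq:assumptions} gives $S^n_{i-1}\ge c(i-1)/2$, so this portion is at most $\tfrac2c(1+\ln C)$ (using $k_2/k_1=C$, and that when $k_2>\e_0 n$ one necessarily has $k_1>\e_0 n/C$); and for $i-1>\e_0 n$ the second condition in \eqref{eq:assumptions} gives $S^n_{i-1}\ge\gamma n$ for some constant $\gamma>0$, so, as $k_2\le n$, this portion is at most $1/\gamma$. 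Combining with the estimate above and $w^r\le r^r\binom{w+r-1}{r}$ gives $\esp{(W^{(v)}_{k_2})^r}\le(re^L)^r$ for every $r\ge1$; optimising $r$ in Markov's inequality then yields $\proba{W^{(v)}_{k_2}\ge t}\le e^{-\kappa t}$ for all $t$ above a constant, with $\kappa:=(2e^{L+1})^{-1}>0$. A union bound over the at most $2C^k$ roots therefore gives, for $k$ large enough, $\proba{M^n_k>k^3}\le 2C^k e^{-\kappa k^3}$. Since $\ln 2+k\ln C+2\ln k=o(k^3)$, there is an integer $k_0=k_0(C,\kappa)$ such that the right-hand side is $\le 1/k^2$ for all $k\ge k_0$, and since $C^k\ge A_n\to\infty$ every admissible $k$ satisfies $k\ge k_0$ once $n$ is large; this proves the lemma.

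The main obstacle is the exponential tail for a single $W^{(v)}_{k_2}$: a plain first-moment (Markov) bound only gives $\proba{W^{(v)}_{k_2}\ge k^3}=O(k^{-3})$, which is useless after the union bound over the roughly $C^k$ roots, so one really needs a bound decaying faster than $C^{-k}$ at $t=k^3$. The device that makes this work is the rising-factorial recursion, which turns the value-dependent Bernoulli transitions into an exact product over the $+1$-steps; the only other delicate point is that the window $[C^k,C^{k+1}]$ may straddle the scale $\e n$ up to which $S^n_i$ is comparable to $ci$, so the bound on $\Sigma_k$ has to be carried out separately in the two regimes of \eqref{eq:assumptions}.
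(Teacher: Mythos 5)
Your proof is correct, and it follows a genuinely different route from the paper's. The paper fixes a tree of $\F^n_{C^k,C^{k+1}}$, tracks its number of active vertices $U_i$ as a function of the step $i$, and appeals to the maximal concentration inequality for P\'olya-type urns quoted as Lemma~\ref{P\'olya} (borrowed from \cite{Bla22}), which yields a per-tree bound of order $\exp(-k^2/\mathcal{O}(1))$ at threshold $k^3$. You instead run a moment-generating argument via the exact rising-factorial recursion: since, given $W^{(v)}_{i-1}=w$, a $+1$ step adds $1$ to $w$ with probability $w/S^n_{i-1}$ and the identity $w\,[\phi_r(w+1)-\phi_r(w)]=r\phi_r(w)$ makes $\esp{\phi_r(W^{(v)}_i)\mid W^{(v)}_{i-1}}=(1+r/S^n_{i-1})\phi_r(W^{(v)}_{i-1})$, while $-1$ steps are contractive, you get $\esp{\phi_r(W^{(v)}_{k_2})}\le e^{r\Sigma_k}$ and then, after the uniform bound $\Sigma_k\le L$ (which is exactly the same splitting of the window at scale $\e_0 n$ used throughout Section~\ref{sec:Linear}), a genuine exponential tail $\proba{W^{(v)}_{k_2}\ge t}\le e^{-\kappa t}$. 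This is entirely self-contained (no black-box urn lemma) and gives a stronger per-tree decay ($e^{-\kappa k^3}$ versus $e^{-k^2/\mathcal O(1)}$), though both are ample for the union bound over the $\mathcal O(C^k)$ roots. Both your argument and the paper's need $k$ to be bounded away from small values when $n$ is large — that is, $A_n\to\infty$ — a point which is implicit in the paper and which you make explicit.
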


Let us now explain how the lower bound for the height follows from Lemmas \ref{Forestier} and \ref{Forestjump}.

\begin{proof}[Proof of the lower bound for Theorem \ref{thm:lineaire} {(III)}] 
{Fix $\eta \in (0,1)$ and take ${C  \geq 2}, \lambda>0, \e\in(0,1)$ such that the conclusion of Lemma \ref{Forestier} holds} {and write $N_k^n$ instead of $N_k^n(C,\eta)$ to simplify notation.}
{For all integers $n,k\in\N$ such that $A_n\leq C^k\leq n/C$,} set
\[
\overline{N}^n_k = \# \left\{v\in \mathcal A^{n}_{C^{k}}, \quad \forall \ell \in {\llbracket1,k-1\rrbracket \text{ s.t. } A_n\leq C^\ell}, \quad   H^n_{C^\ell,C^{\ell+1}}(v) > \frac{{c+1}
}{2c}\ln(C)(f(c)-\eta) \right\}.
\]
We shall show that  {if $ {K_{n}}$ is the largest integer $k$ such that $C^{{k}}\leq {\e} n/C$, then
\begin{equation}
\label{eq:proba} \proba{\overline N^{n}_{{{K_{n}}}}\geq 1}  \quad \mathop{\longrightarrow}_{n \rightarrow \infty} \quad 1.
\end{equation}
If \eqref{eq:proba} holds, then with probability tending to $1$} there exists a vertex $v_{{{K_{n}}}} \in {\mathcal{A}}^{n}_{C^{{{K_{n}}}}}$ and vertices $v_{i} \in {\mathcal{A}}^{n}_{C^{i}}$ for every $i\in{\llbracket1,{{K_{n}}}-1\rrbracket\text{ such that } A_n\leq C^i}$, $v_{i}$ is an ancestor of $v_{i+1}$ and $d^{n}(v_{i},v_{i+1}) >\frac{{c+1}
}{2c}\ln(C)(f(c)-\eta)$. Thus the height of $v_{{{K_{n}}}}$ in $\mathcal{T}^{n}$ is at least
{
\[
\# \{ i \in \N : \ln(A_{n})/\ln(C) \leq i < K_{n}\} \cdot \frac{{c+1}
}{2c}\ln(C)(f(c)-\eta),
\]
which is equal to}
\[ \left( \left\lfloor \frac{\ln(n)}{\ln(C)} {+ \frac{\ln(\e/C)}{\ln(C)}} \right\rfloor - \left\lceil\frac{\ln(A_n)}{\ln(C)} \right\rceil  \right) \cdot \frac{{c+1}
}{2c} \ln(C) \cdot  (f(c)-\eta)   \quad \mathop{\sim}_{n \rightarrow \infty} \quad  \ln(n) \frac{{c+1}
}{2c}(f(c)-\eta). \]
Since $\eta \in (0,1)$ was chosen {arbitrarily}, this will imply the desired result.

{The main step of the proof {to prove \eqref{eq:proba} is to} establish that  for every $k\in \N$ with $A_n \leq C^k\leq \e n/C$ we have
\begin{equation}
\label{eq:Nbar}
\proba{ \overline N^{n}_{k+1} <2^{k+1}  \middle| \overline N^{n}_{k} \geq 2^k}=\mathcal{O}\left(\frac{1}{k^2}\right),
\end{equation}
{where here, and in the rest of the proof, the $\mathcal{O}$ is uniform in  $A_n \leq C^k\leq \e n/C$ . This means that there exists a constant $\Delta>0$ such that the term $\mathcal{O}({1}/{k^2})$ can be bounded from above by $\Delta/k^{2}$ for every $A_n \leq C^k\leq \e n/C$ and $n$ large enough.}
If this holds, 
\[
\proba{\exists k \textrm{ with } A_n \leq C^k\leq \e n/C \textrm{ and } \overline N^{n}_{k}< 2^k}  \leq	 \proba{N^{n}_{k_{n}}< 2^{k_{n}}} + \mathcal{O} \left( \sum_{k=k_{n}}^{\infty} \frac{1}{k^{2}}\right)
\]
with $k_{n}=\lceil  \ln(A_{n})/\ln(C) \rceil$. By Lemma \ref{Forestier},
{
\[
 \proba{N^{n}_{k_{n}}< 2^{k_{n}}} \leq \proba{N_{k_{n}}^n  \leq  4 C^ {k_{n}}} \leq   \frac{\lambda}{C^{k_{n}}} \leq \frac{\lambda}{A_{n}}  \quad \mathop{\longrightarrow}_{n \rightarrow \infty} \quad 0,
\]
where the first inequality holds because $C \geq 2$.
}
{As a consequence,}  with probability tending to $1$ as $ n \rightarrow \infty$, for every integer $k$ such that $A_n \leq C^k\leq \e n/C$, we have $\overline N^{n}_{k}\geq 2^k$, {which implies \eqref{eq:proba}.}
}

{It thus remains to establish \eqref{eq:Nbar}.}  Note that the random variable $\overline N^{n}_{k}$ is $\T^{n}_{C^k}$ measurable, and so is independent of the random variables $N^{n}_{k}$ and $M^{n}_k$ which are $\F^{n}_{C^k,C^{k+1}}$ measurable. As a consequence, we get {for every $k\in \N$ with $A_n \leq C^k\leq \e n/C$}: 
\begin{eqnarray}
&&\proba{ \overline N^{n}_{k+1} <2^{k+1}  \middle| \overline N^{n}_{k} \geq 2^k} \notag \\
&& \qquad\qquad \leq  \proba{N^{n}_{k}< 4C^k}+\proba{M^{n}_{k}>k^3} + \proba{ \overline N^{n}_{k+1} <2^{k+1},  N^{n}_{k}\geq 4C^k, M^{n}_{k}\leq k^3 \middle| \overline N^{n}_{k}\geq 2^k} \notag\\
&& \qquad\qquad\leq  \mathcal{O} \left(  \frac{1}{C^{k}}\right)+ \mathcal{O} \left( \frac{1}{k ^{2}}\right)+\proba{ \overline N^{n}_{k+1} <2^{k+1},  N^{n}_{k}\geq 4C^k, M^{n}_{k}\leq k^3 | \overline N^{n}_{k}\geq 2^k},
\label{14/07/21h}
\end{eqnarray}
where we have used  Lemmas \ref{Forestier} and \ref{Forestjump}.

To bound the third term, we  shall estimate $\overline N^{n}_{k}$   by using a second moment technique. To this end, we recall that one may reconstruct a tree with the same distribution as $\T^{n}_{C^{k+1}}$, by  merging uniformly at random each active vertex from $\T^{n}_{C^{k}}$ with one vertex labelled $\star$ from $\F^{n}_{C^k,C^{k+1}}$. 
With this in mind, it follows that 
\begin{equation}
\E \left [\left . \overline N^{n}_{k+1} \right | \T^{n}_{C^k}, \F^{n}_{C^k,C^{k+1}} \right ] = \frac{\overline N^{n}_{k} N^{n}_{k}}{S^n_{C^k}},
\label{14/07/19h}
\end{equation}
where we recall that $S_{C^k}^n= S_{C^{k}}(\Xb^n)$ represents the number of active vertices of $ \T^{n}_{C^{k}}$. 

Next, denote by $\mathcal{A}^{n}_{C^k,C^{k+1}}$ the set of all active vertices of the forest $\F^{n}_{C^k,C^{k+1}}$. For  $v\in \mathcal{A}^{n}_{C^k,C^{k+1}}$, let $E^{n}_{k}(v)$ be the event defined by
\[
E^{n}_{k}(v)=\left\{ \forall \ell \in {\llbracket1,k-1\rrbracket \text{ s.t. } A_n\leq C^\ell}, \quad   H^n_{C^\ell,C^{\ell+1}}(v) > \frac{{c+1}
}{2c}\ln(C)(f(c)-\eta) \right\};
\] 
{if this event holds say that $v$ is $k$-good, and $k$-bad otherwise. Observe that by definition, for every tree of $\F^{n}_{C^k,C^{k+1}}$, either all its active vertices are $k$-good, or all its vertices are $k$-bad.}

{Note that for all $v, v' \in \mathcal{A}^{n}_{C^k,C^{k+1}}$, conditionally on $ \T^{n}_{C^k}$ and $\F^{n}_{C^k,C^{k+1}}$, the events $E^{n}_{k+1}(v)$ and $E^{n}_{k+1}(v')$ are negatively correlated unless $v,v'$ are in the same tree in $\mathcal{F}^n_{C^k,C^{k+1}}$.  Indeed, on the one hand, if the trees of $\F^{n}_{C^k,C^{k+1}}$ containing $v$ and $v'$ are different, then when we attach the tree containing $v$ to an active vertex of $ \mathcal{A}^{n}_{C^k}$ {in such a way that $v$ is $k$-good} then the tree containing $v'$ will have less chance to be attached to {another} vertex of $\mathcal{A}^{n}_{C^k}$ {in such a way that $v'$ is $k$-good}, and vice versa. On the other hand, if $v$ and $v'$ are in the same tree and if we have the inequalities $H^n_{C^k,C^{k+1}}(v) > \frac{{c+1}
}{2c}\ln(C)(f(c)-\eta)$ and $H^n_{C^k,C^{k+1}}(v') > \frac{{c+1}
}{2c}\ln(C)(f(c)-\eta)$, if we denote by $w$ the (random) active vertex of $\T_{C^k}^n$ on which this tree is attached, then $v$ and $v'$ are both $k+1$-good if and only if $w$ is$ k$-good.}

As a result, {writing $\widetilde{\mathbb{P}}$ and $\widetilde{\mathbb{E}}$ for the conditional probability and expectation conditionally given $ \T^{n}_{C^k}$ and $\F^{n}_{C^k,C^{k+1}}$, we get}
\begin{align}
\vartilde \left [ \overline N^{n}_{k+1}  \right ]  = \vartilde \left [  \sum_{v\in \mathcal{A}^{n}_{C^k,C^{k+1}}} \1_{E^{n}_{k+1}(v)}  \right ] &=\sum_{v,v'\in  \mathcal{A}^{n}_{C^k,C^{k+1}}} \covtilde ( \1_{E^{n}_{k+1}(v)} , \1_{E^{n}_{k+1}(v')} ). \notag
\\ & \leq M^{n}_{k} \sum_{v\in  \mathcal{A}^{n}_{C^k,C^{k+1}}} \probatilde { {E^{n}_{k+1}(v)}}   = M^{n}_{k}  \cdot  {\esptilde{  \overline N^{n}_{k+1}}}.
\label{14/07/19hb}
\end{align}
Then,  by \eqref{14/07/19h} {on} the event 
$\left\{\overline N^{n}_{k}\geq 2^k, N^{n}_{k}\geq 4C^k, M^{n}_{k}\leq k^3\right\}$ (which is measurable with respect to $ \T^{n}_{C^k}, \F^{n}_{C^k,C^{k+1}}$), {using the bound $S^{n}_{C^{k}} \leq C^{k}$,}
we have
\[ \esptilde{  \overline N^{n}_{k+1}}\geq \frac{\overline N^{n}_{k} N^{n}_{k}}{S^n_{C^k}}\geq 2^{k+2}.\]
{Thus, by \eqref{14/07/19hb}, still on the event 
$\left\{\overline N^{n}_{k}\geq 2^k, N^{n}_{k}\geq 4C^k, M^{n}_{k}\leq k^3\right\}$,} we have 
\begin{eqnarray*}
\probatilde{  \overline N^{n}_{k+1} <  2^{k+1} } & \leq& \probatilde{  \overline N^{n}_{k+1} < \frac{1}{2}\esptilde{ \overline N^{n}_{k+1}}  } 
  \leq 4 \frac{ \vartilde \left[ \overline N^{n}_{k+1} \right]} { \esptilde{ \overline N^{n}_{k+1}}^2}
  \leq  {4 \frac{ M^{n}_{k} }{\esptilde{  \overline N^{n}_{k+1}}}}  \\
  &{=}&  \frac{4 M^{n}_{k} S^n_{C^k}}{N^{n}_{k} N^{n}_{k}}
  \leq \frac{4k^3 C^k}{2^k C^k}=\mathcal{O}\left( \frac{1}{k^{2}}\right).
  \end{eqnarray*}
{As a consequence,
\begin{eqnarray*}
&&\proba{ \overline N^{n}_{k+1} <2^{k+1}, N^{n}_{k}\geq 4C^k, M^{n}_{k}\leq k^3 \middle| \overline N^{n}_{k}\geq 2^k} \\
&& \qquad \qquad	= \esp{\probatilde{  \overline N^{n}_{k+1} < 2^{k+1}}  \mathbbm{1}_{N^{n}_{k}\geq 4C^k, M^{n}_{k}\leq k^3} \middle| \overline N^{n}_{k}\geq 2^k }=\mathcal{O}\left( \frac{1}{k^{2}}\right),
\end{eqnarray*}
and \eqref{eq:Nbar} follows from \eqref{14/07/21h}. This completes the proof.}
\end{proof}

\subsection{Proof of Lemmas \ref{Forestier} and \ref{Forestjump}}
\label{ssec:lem}
We keep the notation introduced in Section \ref{ssec:limitheight}. The following estimate is the first step for the proof of Lemma \ref{Forestier}.
\begin{lemma}
\label{lem:minoration}
For every $\eta \in (0,1)$, there exists an integer $C>1$ {and $\e\in(0,1)$ such that} for every $n$ large enough for every $k\in \N$, with $A_n \leq C^k\leq \e n/C$, we have
\[
\esp{N_k^n(C,\eta)} \geq 5 C^{k}.
\]
\end{lemma}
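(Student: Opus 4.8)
The plan is a first-moment computation: we compute $\esp{N_k^n(C,\eta)}$ exactly as a tail probability of a sum of independent Bernoulli variables, then use the harmonic-sum asymptotics of Lemma~\ref{lem:estimee uniforme sur h_n+} together with a careful large-deviation \emph{lower} bound. Throughout we may and do assume $A_n\to\infty$ (replacing $A_n$ by a larger sequence still $o(\ln n)$ only shrinks the range of admissible $k$ and does not affect the downstream use of the lemma). Set $\theta_C\coloneqq\frac{1+c}{2c}\ln(C)(f(c)-\eta)$. By the time-reversed construction of $\F^n_{C^k,C^{k+1}}$ — a windowed version of Algorithm~\ref{algo2}, in which the $S^n_{C^{k+1}}$ vertices active at time $C^{k+1}$ are the initial one-vertex trees and the number of trees at ``time'' $i$ equals $S^n_i$ — the argument behind Lemma~\ref{lem:accroissementH} and \eqref{eq:H0} gives, for each $v\in\mathcal A^n_{C^{k+1}}$,
\[
H^n_{C^k,C^{k+1}}(v)\ \eqloi\ W\coloneqq\sum_{i=C^k+1}^{C^{k+1}}Y_i\,\1_{\{\X^n_i=1\}},
\]
where the $Y_i$ are independent, $Y_i$ Bernoulli of parameter $1/S^n_i$. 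Since $\#\mathcal A^n_{C^{k+1}}=S^n_{C^{k+1}}$, linearity of expectation yields $\esp{N_k^n(C,\eta)}=S^n_{C^{k+1}}\,\proba{W>\theta_C}$, and it remains to bound $S^n_{C^{k+1}}$ and $\proba{W>\theta_C}$ from below.

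\emph{Parameters and the easy regime.} Fix a small $\delta>0$. By \eqref{eq:assumptions} (choosing $\e>0$ small, then $n$ large; recall $A_n\le C^k<C^{k+1}\le\e n$) we have $S^n_{C^{k+1}}\ge(c-\delta)C^{k+1}$ and $S^n_i\ge(c-\delta)i$ throughout the window, uniformly over $k$ with $A_n\le C^k\le\e n/C$; and since $C^{k+1}/C^k=C$, Lemma~\ref{lem:estimee uniforme sur h_n+} (with the constant there taken equal to $2$, legitimate once $C\ge3$) gives
\[
\mu\coloneqq\esp{W}=\sum_{i=C^k+1}^{C^{k+1}}\tfrac1{S^n_i}\1_{\{\X^n_i=1\}}\ \in\ \Big[\big(\tfrac{1+c}{2c}-\delta\big)\ln C,\ \big(\tfrac{1+c}{2c}+\delta\big)\ln C\Big]
\]
for $n$ large, uniformly in such $k$. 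If $f(c)-\eta<1$, then for $\delta$ small $\theta_C\le(1-\gamma_0)\mu$ with $\gamma_0=\gamma_0(c,\eta)>0$, so Proposition~\ref{prop:bennett} gives $\proba{W\le\theta_C}\le e^{-\mu g(\gamma_0)}\le C^{-(\frac{1+c}{2c}-\delta)g(\gamma_0)}\le\tfrac12$ for $C$ large, hence $\esp{N_k^n}\ge\tfrac12(c-\delta)C^{k+1}\ge5C^k$ for $C$ large. From now on assume $f(c)-\eta\ge1$.

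\emph{The far upper tail and the conclusion.} Here a Chernoff bound is the wrong inequality, so we bound $\proba{W>\theta_C}\ge\proba{W=m}$ directly, with $m\coloneqq\lfloor\theta_C\rfloor+1=(f(c)-\eta)\frac{1+c}{2c}\ln C+O(1)$. Writing $p_i=1/S^n_i$, $\lambda_i=p_i/(1-p_i)$, $J=\{i\in(C^k,C^{k+1}]:\X^n_i=1\}$, $\Lambda=\sum_{i\in J}\lambda_i\ge\mu$, one has $\proba{W=m}=\big(\prod_{i\in J}(1-p_i)\big)\,e_m(\{\lambda_i\}_{i\in J})$ with $e_m(\{\lambda_i\}_{i\in J})=\sum_{I\subseteq J,\,\#I=m}\prod_{i\in I}\lambda_i$ the $m$-th elementary symmetric polynomial. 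Bound $\prod_{i\in J}(1-p_i)\ge e^{-\Lambda}\ge e^{-\mu}e^{-2\mu\max_{i\in J}p_i}$ and, expanding $\Lambda^m=\sum_{\phi:\br{1,m}\to J}\prod_j\lambda_{\phi(j)}$ and discarding the non-injective $\phi$, $m!\,e_m\ge\Lambda^m-\binom m2(\max_{i\in J}\lambda_i)\Lambda^{m-1}$. Since $\max_{i\in J}p_i\le1/((c-\delta)A_n)\to0$ while $m=O(\ln C)$, all the correction factors are $1-o(1)$ as $n\to\infty$ (uniformly in $k$), so for $n$ large $\proba{W=m}\ge\tfrac14 e^{-\mu}\mu^m/m!$. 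Stirling ($m!\le e\sqrt m(m/e)^m$) and the identity $-\mu+m+m\ln(\mu/m)=-\mu\,g(m/\mu-1)$ give $\proba{W=m}\ge\frac1{4e\sqrt m}e^{-\mu g(m/\mu-1)}$. Now the computation in the proof of the upper bound of Theorem~\ref{thm:lineaire}(3) gives $g(f(c)-1)=\tfrac{2c}{1+c}$, i.e.\ $\tfrac{1+c}{2c}g(f(c)-1)=1$; since $g$ is strictly increasing on $[0,\infty)$ and $\eta>0$, $\kappa\coloneqq1-\tfrac{1+c}{2c}g(f(c)-\eta-1)>0$. As $m/\mu\to f(c)-\eta$ up to an error controlled by $\delta$ and $1/\ln C$, $g$ is continuous, and $\mu\le(\tfrac{1+c}{2c}+\delta)\ln C$, we may choose $\delta$ small then $C$ large so that $\mu\,g(m/\mu-1)\le(1-\tfrac{3\kappa}{4})\ln C$ for $n$ large; hence
\[
\esp{N_k^n(C,\eta)}=S^n_{C^{k+1}}\proba{W>\theta_C}\ \ge\ (c-\delta)C^{k+1}\cdot\frac1{4e\sqrt m}\,C^{-1+3\kappa/4}\ =\ \frac{c-\delta}{4e\sqrt m}\,C^{3\kappa/4}\,C^k.
\]
Since $m=O(\ln C)$ and $\kappa>0$, $C^{3\kappa/4}/\sqrt m\to\infty$ as $C\to\infty$, so enlarging $C$ (and then shrinking $\e$, enlarging $n$) makes the prefactor $\ge5$, giving $\esp{N_k^n(C,\eta)}\ge5C^k$ uniformly over $k$ with $A_n\le C^k\le\e n/C$.

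\textbf{Main obstacle.} The delicate step is the matching \emph{lower} bound on the far upper tail $\proba{W>\theta_C}$ at the exact exponential rate $\exp(-\mu\,g(f(c)-\eta-1))$: Bennett's inequality only bounds this probability from above, so one must compute $\proba{W=m}$ by hand (via elementary symmetric polynomials and Stirling) and then verify that the polynomial-in-$C$ factor $1/\sqrt m$ and the $n\to\infty$ corrections are dominated by the gain $C^{3\kappa/4}$. The algebraic identity $\tfrac{1+c}{2c}g(f(c)-1)=1$ is precisely what makes $\eta=0$ critical and leaves a strictly positive exponent to exploit; a secondary (but routine) point is setting up the windowed growth-coalescent description of $\F^n_{C^k,C^{k+1}}$ needed for the reduction $\esp{N_k^n(C,\eta)}=S^n_{C^{k+1}}\proba{W>\theta_C}$.
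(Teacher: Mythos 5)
Your proof is correct, and it follows the same overall strategy as the paper: reduce to a first-moment computation $\esp{N_k^n(C,\eta)}=S^n_{C^{k+1}}\,\proba{W>\theta_C}$ (using linearity, the growth--coalescent representation of $H^n_{C^k,C^{k+1}}(v)$ as a sum of independent Bernoullis, and the uniform harmonic-sum estimate of Lemma~\ref{lem:estimee uniforme sur h_n+}), then extract a strictly positive exponent gap $\kappa$ from the identity $\tfrac{1+c}{2c}g(f(c)-1)=1$ so that the lower bound beats $C^{-1}$ by a power of $C$.

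The one genuine point of divergence is the technical engine used for the lower bound on the upper tail $\proba{W>\theta_C}$. The paper invokes the Barbour--Hall theorem to pass to a genuine Poisson random variable in total variation, then applies a crude but explicit Poisson point-probability bound $\proba{\mathcal P\mathrm{oi}(\lambda)>\lambda t}\geq e^{-\lambda g(t-1)}/(\lambda t)^3$. You instead avoid the Poisson approximation black box entirely and bound $\proba{W=m}$ by hand via the factorisation $\proba{W=m}=\bigl(\prod_{i\in J}(1-p_i)\bigr)e_m(\lambda_\bullet)$, the elementary-symmetric-polynomial inequality $m!\,e_m\geq\Lambda^m-\binom m2(\max_i\lambda_i)\Lambda^{m-1}$, and Stirling. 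Both routes land on the same Poisson-style rate $e^{-\mu g(m/\mu-1)}$ up to a polylog-in-$C$ prefactor, which is then absorbed by $C^{3\kappa/4}$. Your version is self-contained (no external reference), at the cost of a few extra lines of bookkeeping; the paper's version is shorter given the reference. You also explicitly separate the easy case $f(c)-\eta<1$ (handled by Bennett), which the paper's Poisson tail bound, stated only for $t>1$, implicitly glosses over; that extra care makes your write-up slightly more complete. Your preliminary reduction to $A_n\to\infty$ is also in line with the hypotheses of Lemma~\ref{lem:estimee uniforme sur h_n+} and with what the paper does in the proof of Theorem~\ref{thm:lineaire}~(1).
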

\begin{proof}[Proof of Lemma \ref{lem:minoration}]
Recall that $g(x)=(1+x)\ln(1+x)-x$. Set $\delta=  1- \frac{{c+1}
}{2c}\cdot g(f(c)-1-\eta)$, which belongs to $(0,1)$ since $ g(f(c)-1-\eta) < (2c)/({c+1}
)$, and then choose an integer $C>1$ {large enough} such that 
\begin{equation}
\frac{1}{\ln(C)^{4}} \exp \left(-  (1-\delta) \ln(C)   \right) \geq  \frac{6}{c C} \qquad {\textrm{and} \qquad \ln(C) \ge  \frac{{c+1}
}{2c}(f(c)-\eta)}. \label{13/06/7h}
\end{equation}
{It remains to check that} this value of $C$ satisfies the desired conclusion. By Lemma \ref{lem:accroissementH} for every $v\in \mathcal A_{C^{k+1}}^n$,
\[ 
H^{n}_{C^{k},C^{k+1}}(v)\quad \mathop{=}^{(d)} \quad \sum_{C^k<i\leq C^{k+1}} Y_i^n, 
\]
where $(Y_i^n)_{1\leq i \leq n}$ are independent Bernoulli random variables of respective parameters $(1/S_i^n\1_{\X^n_i=1})_{1\leq i \leq n}$. 
{Lemma \ref{lem:estimee uniforme sur h_n+} yields \[\lim_{\e \to 0} \limsup_{n\to \infty}  \max_{A_{n} \leq C^{k}\leq \e n} \left| \sum_{C^k<i\leq C^{k+1}} \frac{1}{S_i^n} \1_{\X^n_i=1} -   \frac{{c+1}
}{2c}\ln(C) \right| \quad \mathop{\longrightarrow}_{n \rightarrow \infty} \quad 0.\]}
Hence, by  \cite[Theorem 1]{BH84}, we have
\[ \lim_{\e \to 0} \limsup_{n\to \infty}  \max_{A_{n} \leq C^{k}\leq \e n}  d_{\textrm{TV}} \left(H^{n}_{C^{k},C^{k+1}}(v), \mathcal P \text{oi} \left(  \frac{{c+1}
}{2c}\ln(C) \right) \right)   =  0,\]
where $\mathcal{P} \text{oi}(\lambda)$ denotes a Poisson random variable with parameter $\lambda$ and $d_{\textrm{TV}}$ stands for the total variation distance. As a consequence, 
setting
\[
P(C) \coloneqq  \P \left(  \mathcal P \text{oi} \left(  \frac{{c+1}
}{2c}\ln(C)\right) >\frac{{c+1}
}{2c}\ln(C)(f(c)-\eta)\right),
\]
we have
\begin{equation}
\label{eq:cvPC}  \lim_{\e \to 0} \limsup_{n\to \infty}  \max_{A_{n} \leq C^{k}\leq \e n}  \left| \P \left( H^{n}_{C^{k},C^{k+1}}(v)> \frac{{c+1}
}{2c}\ln(C)(f(c)-\eta) \right)  - P(C) \right| = 0.
\end{equation}
Then, for $\lambda>2$ and $t >1$, using the bound $\ceil{u}! \leq  u^{3}(u/e)^{u}$ for $u \geq 2$,
observe that
\[ 
\proba{\mathcal P \text{oi}\left (\lambda \right )>\lambda t} \geq e^{-\lambda}  \frac{\lambda^{\ceil{\lambda t}}}{\ceil{\lambda t}!} \geq e^{-\lambda}  \lambda^{ \lambda t} \left(\frac{e}{\lambda t}\right )^{\lambda t}\frac{1}{(\lambda t)^3} = e^{-\lambda g(t-1)}  \frac{1}{(\lambda t)^3},
\]
Taking into account our choice of $C$ (see \eqref{13/06/7h}), it follows that 
\begin{equation}
\label{eq:bornePC}P(C) \geq \frac{1}{\ln(C)^{4}} \exp \left(- \frac{{c+1}
}{2c}\ln(C)  g(f(c)-1-\eta) \right) \geq \frac{6}{cC}.
\end{equation}
Then observe that by linearity we have
\[\esp{{N_k^n(C,\eta)}}= \#\mathcal{A}^n_{C^{k+1}} \P \left( H^{n}_{C^{k},C^{k+1}}(v)> \frac{{c+1}
}{2c}\ln(C)(f(c)-\eta) \right) .\]
But $|\#\mathcal{A}^n_{C^{k+1}} / (c C^{k+1})-1| =|S^n_{C^{k+1}} / (c C^{k+1})-1| \leq  \eta_{n}(\e)/c$ for $C^{k+1} \leq \e n$, 
where we recall that
\[
\eta_n(\e) = \max_{A_n \leqslant i \leqslant \e n } \left \lvert \frac{S^n_i}{i} -c \right\rvert .
\]
The desired result follows by using \eqref{eq:cvPC} and \eqref{eq:bornePC}.
\end{proof}

 {Observe that by \eqref{eq:assumptions} we may choose ${\e_{0} \in (0,1)}$ such that
\[
\limsup_{n\to \infty} \max_{A_n \le i \le \e_{0} n} \left| \frac{S^n_i}{i}-c \right| < \frac{c}{2}
\qquad \text{and} \qquad 
\liminf_{n\to \infty} \min_{\e_{0} n \le i \le n } \frac{S^n_i}{n} >0,
\]
which implies the existence of a constant $c_{0}>0$ such that:
\begin{equation}
\label{eq:c0}
\textrm{for every $n$ sufficiently large, \quad for every $1 \leq i \leq n$,} \qquad  c_{0} i \leq S^{n}_{i} \leq  i+1,
\end{equation}
where the upper bound comes from the fact that $\X^{n}_{i} \leq 1$.}

We are now ready to prove Lemma \ref{Forestier}.

\begin{proof}[Proof of Lemma \ref{Forestier}]
Fix $\eta \in (0,1)$. Consider $C>1$ given by Lemma \ref{lem:minoration}. Take $\varepsilon\in(0,1)$ small enough such that the conclusion of Lemma \ref{lem:minoration} holds.
The main idea of the proof is to show that for some fixed $\lambda>0$ we have uniformly for every $k\in \N$ with $A_n\leq C^k\leq \e n/C$
\begin{equation}
\label{eq:var}\var( N_k^n(C,\eta))\leq \lambda C^k.
\end{equation}
 Indeed, it will then directly follow from Lemma \ref{lem:minoration} and  Bienaymé-Chebyshev's inequality that 
\[
\proba{ N_k^n(C,\eta) <  4 C^k} \leq \proba{ \left | N_k^n(C,\eta)-\E[N_k^n(C,\eta)] \right | > C^k}\leq \var( N_k^n(C,\eta))/C^{2k} \leq  \lambda/C^k.
\]
{
We shall show that
\begin{equation}
\label{eq:esp}\esp{( N^n_k(C,\eta))^2}  \le  \left( \esp{ N^n_k(C,\eta)} \right)^2  + \mathcal{O}(C^k).
\end{equation}
This indeed implies \eqref{eq:var}.} Observe that by definition of $N_k^n(C,\eta)$, setting $t=\frac{{c+1}
}{2c}\ln(C)(f(c)-\eta)$, we have
\[
N_k^n(C,\eta)= \sum_{v \in \mathcal{A}_{C^{k+1}}^n} \mathbbm{1}_{H^n_{C^{k},C^{k+1}}(v)>t}.
\]
Note from Algorithm \ref{algo2}, that for every $v,w \in \mathcal{A}_{C^{k+1}}^n$ {with $v \neq w$} 
\[ 
\left( H^n_{C^{k},C^{k+1}}(v)\quad , \quad H^n_{C^{k},C^{k+1}}(w)\right)\quad \mathop{=}^{(d)} \quad \left ( \sum_{C^k<i\leq C^{k+1}} Y_i^n \quad , \quad \sum_{C^k<i\leq C^{k+1}} Z_i^n \right )\]
where $(Z_i^n)_{1\leq i \leq n}$ are independent Bernoulli random variables of respective parameters $(1/S_i^n\1_{\X^n_i=1})_{1\leq i \leq n}$. The problem is that $(Z_i^n)_{1\leq i \leq n}$ and $(Y_i^n)_{1\leq i \leq n}$ are not independent. The idea is to say that, conditionally on the fact that $v$ and $w$ do not coalesce (i.e they do not belong to the same tree) before time $C^{k+1}$, then, when the height of $v$ goes up by $1$ in the coalescence process, the height of $w$ doesn't change and vice versa. Therefore their height{s} are negatively correlated. One can see that for all $v, w \in \mathcal{A}^n_{C^{k+1}}$,
\begin{align}
	\P&\left( H^n_{C^{k},C^{k+1}}(v) >t\text{ and }H^n_{C^{k},C^{k+1}}(w) >t \right) \notag\\
	& \le \proba{C^k \le \coal_n(v,w)\le C^{k+1}}
	+\proba{\coal_n(v,w) <C^k \text{ and } H^n_{C^{k},C^{k+1}}(v) >t \text{ and }  H^n_{C^{k},C^{k+1}}(w) >t}. \label{ligne}
\end{align}
The first probability of \eqref{ligne} is bounded from above as follows:
\begin{equation}\label{eq:majoration ligne1}
	\P\left(C^k \le \coal_n(v,w)\le C^{k+1}
	\right)
	\le  1- \! \!  \! \prod_{C^k\le i \le C^{k+1}} \left(1-\1_{\X^n_i=1} \frac{1}{\binom{S^n_i}{2}}\right)
	\le 1-\exp\left( \sum_{C^k \le i \le C^{k+1}} \ln\left(1- \frac{1}{\binom{S^n_i}{2}}\right)\right)
	=\mathcal{O}\left(\frac{1}{C^k}\right),
\end{equation}
where the last equality comes from \eqref{eq:assumptions} and where the $\mathcal{O}(1/C^k)$ is uniform in $v,w$ and in $n\ge C^{k+1}$. 

We then focus on the event  involved in the second probability of \eqref{ligne}. One can write 
\[
H^n_{C^{k},C^{k+1}}(v)=\sum_{C^k<i\le C^{k+1}} Y_i^n,
\] 
where $(Y_i^n)_{1\leq i \leq n}$ are independent Bernoulli random variables of respective parameters $(1/S_i^n\1_{x^n_i=1})_{1\leq i \leq n}$. Moreover, we note that if we know that $Y_i^n=1$ and that $\coal_n(v,w)\le i-1$, then it implies that there is no coalescence with the tree containing $w$. More precisely, one can write 
\[
H^n_{C^{k},C^{k+1}}(w)=\sum_{C^k<i\le C^{k+1}} Z_i^n.
\] 
with 
\[
Z_i^n= Y_i^n\1_{\coal_n(v,w)\ge i} + \xi^n_i \1_{\coal_n(v,w)< i \text{ and } Y_i^n=0},
\]
where the $\xi^n_i$'s are independent Bernoulli r.v. of parameter $(1/(S^n_i-1))\1_{\X^n_i=1}$ respectively and are taken independently from the $Y_i^n$'s. As a result, the second probability in \eqref{ligne} is bounded from above by 
\begin{align}
	\P&\left(\coal_n(v,w) <C^k, \  \sum_{C^k<i\le C^{k+1}} Y_i^n >t \enskip \text{ and }
	\sum_{C^k<i\le C^{k+1}} Z_i^n >t
	 \right)\notag\\
	 &= \P\left(
	 \coal_n(v,w)<C^k, \  \sum_{C^k<i\le C^{k+1}} Y_i^n >t \enskip \text{ and }
	 \sum_{C^k<i\le C^{k+1}}  \xi^n_i \1_{ Y_i^n=0}>t
	 \right) \notag\\
	 &\le \P\left(
	 \sum_{C^k<i\le C^{k+1}} Y_i^n >t \enskip \text{ and }
	 \sum_{C^k<i\le C^{k+1}}  \xi^n_i >t
	 \right) \notag \\
	 &=   \P\left(
	 \sum_{C^k<i\le C^{k+1}} Y_i^n >t \right) \ 
	 \P \left(
	 \sum_{C^k<i\le C^{k+1}}  \xi^n_i >t
	 \right) \le   \P \left(
	 \sum_{C^k<i\le C^{k+1}}  \xi^n_i >t
	 \right)^2. \label{eq:ksi} 
\end{align}
Now, thanks to the fact that $1/S^n_i\le 1/(S^n_i -1)$, one can define some random variables $\widetilde{Y}^n_i$'s for $i\ge 1$ such that for all $i\ge 1$, we have $\widetilde{Y}^n_i \le \xi^n_i$ and $(\widetilde{Y}^n_i)_{i\ge 1}$ is a family of independent Bernoulli random variables of parameter $(1/S^n_i)\1_{\X^n_i=1}$, having thus the same law as  $({Y}^n_i)_{i\ge 1}$. It follows that
\[
	\P\left( 
	\sum_{C^k < i \le C^{k+1}} \xi^n_i > t
	\right)
	-
	\P\left(
	\sum_{C^k < i \le C^{k+1}} Y^n_i > t
	\right)=
	\P\left(
	\sum_{C^k < i \le C^{k+1}} \xi^n_i > t\ge \sum_{C^k < i \le C^{k+1}} \widetilde{Y}^n_i 
	\right)
	\le
	\P\left(
	\sum_{C^k < i \le C^{k+1}} (\xi^n_i-\widetilde{Y}^n_i) >0
	\right)
\]
Thus
\[
		\P\left( 
	\sum_{C^k < i \le C^{k+1}} \xi^n_i > t
	\right)
	-
	\P\left(
	\sum_{C^k < i \le C^{k+1}} Y^n_i > t
	\right)
	\le 
	\E \left[\sum_{C^k < i \le C^{k+1}} (\xi^n_i-\widetilde{Y}^n_i) \right]
	=
	\sum_{C^k< i\le C^{k+1}} \frac{1}{S^n_i(S^n_i-1)}\1_{\X^n_i=1}=\mathcal{O}\left(\frac{1}{C^k}\right),
\]
where the $\mathcal{O}(1/C^k)$ is uniform in $n \ge C^{k+1}$. {By combining this estimate with \eqref{ligne} and \eqref{eq:ksi}, we conclude that
\[
\P\left( H^n_{C^{k},C^{k+1}}(v) >t\text{ and }H^n_{C^{k},C^{k+1}}(w) >t \right)  \leq  \P\left( H^n_{C^{k},C^{k+1}}(v) >t\right) {\P\left( H^n_{C^{k},C^{k+1}}(w) >t\right)}+ \mathcal{O}\left(\frac{1}{C^k} \right)
\]
}
{Therefore, since  $\#\mathcal{A}^n_{C^{k+1}} \leq C^{k+1}+1$,}
{\begin{eqnarray}
&&\esp{( N^n_k(C,\eta))^2} \notag\\
&& \qquad = \sum_{v \in \mathcal{A}_{C^{k+1}}^n} \P\left( H^n_{C^{k},C^{k+1}}(v) >t\right)+ \sum_{ {v \neq w} \in \mathcal{A}_{C^{k+1}}^n}\P\left( H^n_{C^{k},C^{k+1}}(v) >t\text{ and }H^n_{C^{k},C^{k+1}}(w) >t \right) \notag\\
& & \qquad =   {\mathcal{O}\left({C^{k}}\right)+ \sum_{ {v \neq w} \in \mathcal{A}_{C^{k+1}}^n}   \P\left( H^n_{C^{k},C^{k+1}}(v) >t\right) {\P\left( H^n_{C^{k},C^{k+1}}(w) >t\right)}+  \left( \#\mathcal{A}^n_{C^{k+1}} \right)^{2} \mathcal{O}\left(\frac{1}{C^k}\right)}  \notag \\
&& \qquad = \sum_{ {v \neq w} \in \mathcal{A}_{C^{k+1}}^n}   \P\left( H^n_{C^{k},C^{k+1}}(v) >t\right) {\P\left( H^n_{C^{k},C^{k+1}}(w) >t\right)}+\mathcal{O}\left({C^{k}}\right)\notag\\
&& \qquad =\sum_{ {v , w} \in \mathcal{A}_{C^{k+1}}^n}   \P\left( H^n_{C^{k},C^{k+1}}(v) >t\right) {\P\left( H^n_{C^{k},C^{k+1}}(w) >t\right)}+\mathcal{O}\left({C^{k}}\right) \notag\\
&&\qquad = \esp{ N^n_k(C,\eta)}^2+\mathcal{O}\left({C^{k}}\right).\notag
\end{eqnarray}
This implies \eqref{eq:var} and completes the proof.}
\end{proof}

To establish Lemma \ref{Forestjump} and bound $M^{n}_{k}$ we use the following estimate on P\'olya urns, which may be shown by following verbatim the proof of Lemma A.1 from \cite{Bla22}.

\begin{lemma} 
\label{P\'olya}
Fix an integer $z_{0}>0$ and set $U_{0}=1$. Let $(z_n)_{n\geq 1}\in \{-1,1\}^\N$, set $Z_{n}=\sum_{i=0}^{n}z_{i}$. Assume that $Z_{n}>0$ for every $n \geq 1$ and  $\sum_{n=0}^\infty 1/Z_n^2<\infty$. Let $(U_n)_{n\geq 1}$ be a sequence of random non-negative integers such that for every $n\geq 0$, 
\[ \proba{ \left . U_{n+1}=U_n+z_{n+1} \right | U_n }=\frac{U_n}{Z_n} \quad ; \quad \proba{ \left . U_{n+1}=U_n \right | U_n }=\frac{Z_n-U_n}{Z_n}.\]
Almost surely for every $t \geq 0$
\[ \proba{   \sup_{i\geq 1} \left | \frac{U_i}{Z_i} - \frac{1}{z_0} \right  | >   t\frac{1}{z_0}   } \leq 2 \exp \left (-\frac{(t^2/4)(1/z_0)}{\sum_{n \geq 1} 1/Z_n^2+t\max \left (\sum_{n \geq 1} 1/Z_n^2,  \max_{n \geq 1} 1/Z_n \right )}\right ). \]
\end{lemma}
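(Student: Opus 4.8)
The plan is to observe that $M_n \coloneqq U_n/Z_n$ is a martingale and run a Freedman--Bennett type concentration argument through an exponential supermartingale coupled with a stopping time. First I would verify the martingale property: from the transition probabilities, $\E[U_{n+1}\mid \mathcal F_n] = (U_n+z_{n+1})\tfrac{U_n}{Z_n} + U_n\tfrac{Z_n-U_n}{Z_n} = U_n\tfrac{Z_{n+1}}{Z_n}$, where $\mathcal F_n = \sigma(U_0,\dots,U_n)$, so that $\E[M_{n+1}\mid\mathcal F_n]=M_n$ and $M_0 = U_0/Z_0 = 1/z_0$. An easy induction on the transitions gives $0\le U_n\le Z_n$, hence $0\le M_n\le1$. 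Then I would record the two quantitative inputs: writing out the (at most) two possible values of $M_{n+1}-M_n$, namely $\tfrac{z_{n+1}(Z_n-U_n)}{Z_nZ_{n+1}}$ with probability $M_n$ and $\tfrac{-z_{n+1}U_n}{Z_nZ_{n+1}}$ with probability $1-M_n$, one obtains
\[
|M_{n+1}-M_n|\le \frac1{Z_{n+1}}\quad\text{a.s.}\qquad\text{and}\qquad \E\big[(M_{n+1}-M_n)^2\mid\mathcal F_n\big] = \frac{M_n(1-M_n)}{Z_{n+1}^2}.
\]
Note also that $\sum 1/Z_n^2<\infty$ forces $Z_n\to\infty$, so that $\sum_{k\ge1}g_0(\theta/Z_k)<\infty$ for every $\theta$, where $g_0(u)\coloneqq e^u-1-u$.

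The key step is a stopping-time device circumventing the obvious circularity: controlling the deviation of $M_n$ from $1/z_0$ calls for the conditional variance bound $M_n(1-M_n)/Z_{n+1}^2\le (1+t)/(z_0 Z_{n+1}^2)$, which holds exactly as long as $M_n<(1+t)/z_0$. So I would let $T$ be the first time $M_n$ leaves $\big((1-t)/z_0,(1+t)/z_0\big)$, so that for $n<T$ we have $M_n(1-M_n)\le M_n<(1+t)/z_0$. Fixing $\theta>0$ and applying the classical bound $\E[e^{sY}\mid\mathcal F_n]\le \exp\!\big(\tfrac{\E[Y^2\mid\mathcal F_n]}{b^2}g_0(sb)\big)$, valid when $\E[Y\mid\mathcal F_n]=0$, $|Y|\le b$, $s>0$, to $Y=\pm(M_{n+1}-M_n)$ with $b=1/Z_{n+1}$ on $\{n<T\}$, one checks that
\[
W_n^{\pm}\ \coloneqq\ \exp\!\Big(\pm\theta\,(M_{n\wedge T}-M_0)\ -\ \frac{1+t}{z_0}\sum_{k=1}^{n\wedge T} g_0(\theta/Z_k)\Big)
\]
are nonnegative supermartingales started at $1$. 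Since on $\{T<\infty\}$ one has $M_T\ge(1+t)/z_0$ or $M_T\le(1-t)/z_0$ and $M_{n\wedge T}=M_T$ for $n\ge T$, Doob's maximal inequality applied separately to $W^{+}$ and $W^{-}$ yields
\[
\proba{\sup_{n\ge1}\Big|\frac{U_n}{Z_n}-\frac1{z_0}\Big|>\frac t{z_0}}\ =\ \P(T<\infty)\ \le\ 2\exp\!\Big(-\frac{\theta t}{z_0}+\frac{1+t}{z_0}\sum_{k\ge1}g_0(\theta/Z_k)\Big).
\]

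It then remains to optimize over $\theta$. Using $g_0(u)\le \tfrac{u^2}{2(1-u/3)}$ for $0<u<3$ gives $\sum_{k\ge1}g_0(\theta/Z_k)\le \tfrac{\theta^2 A}{2(1-\theta B/3)}$ with $A\coloneqq\sum_{k\ge1}1/Z_k^2$ and $B\coloneqq\max_{k\ge1}1/Z_k$; the choice $\theta = \tfrac{t}{(1+t)A + tB/3}$ (for which $\theta B<3$) and the routine Bernstein computation collapse the exponent to $-\tfrac{t^2/z_0}{2(1+t)A+2tB/3}$, which a crude comparison bounds above by $-\tfrac{t^2/(4z_0)}{A+t\max(A,B)}$, giving the claimed inequality. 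The hard part will be purely the bookkeeping of constants --- in particular securing the $1/z_0$ (rather than $1/z_0^2$) scaling in the exponent, which rests precisely on using $\E[(M_{n+1}-M_n)^2\mid\mathcal F_n]\le (1+t)/(z_0 Z_{n+1}^2)$ for $n<T$ instead of the cruder bound $1/(4Z_{n+1}^2)$ --- and this is exactly the argument carried out verbatim in \cite[Lemma~A.1]{Bla22}.
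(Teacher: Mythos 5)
The paper does not actually prove this lemma itself; it states that it ``may be shown by following verbatim the proof of Lemma A.1 from \cite{Bla22}'', which is precisely the exponential-supermartingale (Freedman--Bernstein) argument with a stopping time that you reconstruct. Your computations are correct --- the martingale identity $\E[U_{n+1}\mid\mathcal F_n]=U_n Z_{n+1}/Z_n$, the bounds $|M_{n+1}-M_n|\le 1/Z_{n+1}$ and $\E[(M_{n+1}-M_n)^2\mid\mathcal F_n]=M_n(1-M_n)/Z_{n+1}^2$, the use of the stopping time $T$ to enforce $M_n(1-M_n)<(1+t)/z_0$ in the conditional-variance bound (which is exactly what yields the $1/z_0$ rather than $1/z_0^2$ in the exponent), and the final choice $\theta=t/((1+t)A+tB/3)$ giving exponent $-\frac{t^2/z_0}{2(1+t)A+2tB/3}$, which is dominated by $-\frac{t^2/(4z_0)}{A+t\max(A,B)}$ since $4\max(A,B)\ge 2A+2B$ --- so the proposal is a faithful and correct rendering of the argument the paper defers to.
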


\begin{proof}[Proof of Lemma \ref{Forestjump}]
{We apply Lemma \ref{P\'olya} with $t=k^2$, $z_{0}=S^{n}_{C^{k}}$, $z_{i}=x^{n}_{C^{k}+i}$ for $1 \le i\le C^{k+1}-C^k$ and $z_i=1$ for all $i>C^{k+1}-C^k$, and $U_{i}$ being the number of active vertices in a given tree of $\F_{C^{k},C^{k}+i}^n$ . Thus,  writing $X_{k}$ for the number of active vertices of a {any fixed} tree of the forest $\F_{C^{k},C^{k+1}}^n$, writing $\delta_k^n\coloneqq\sum_{C_k<n\leq C^{k+1}} 1/(S^n_i)^2+ \sum_{j\ge 1} 1/(S^n_{C^{k+1}} + j)^2$, and $\e_k^n\coloneqq \max_{C^k<i\leq C^{k+1}} 1/S^n_i$,

\[ \proba{X_k/S^n_{C^{k+1}}>  (1+ k^2)/S^n_{C^k}  } \leq 2 \exp \left (-\frac{(k^4/4)/S^n_{C^k}}{\delta^n_k+k^2\max (\delta^n_k,  \e^n_k)} \right ). \]
And by \eqref{eq:c0}, as $k,n\to \infty$ with $A_n\leq C^k\le n/C$, we have $S^n_{C^k}\le C^k+1$, $1/S^n_{C^k}=\mathcal{O}(1/C^k)$, $\e_k^n=\mathcal{O}(1/C^k)$ and $\delta_k^n=\mathcal{O}(1/C^k)$,
thus for every $k$ large enough, 
\[ \proba{X_k>   k^3  } \leq 2 e^{-k^2/\mathcal{O}(1)}. \]
Then, since there are at most $S^n_{C^k}=\mathcal{O}(C^k)$ trees in $\F_{C^{k},C^{k+1}}^n$, for $n$ large enough, for every $k\in \N$, with $A_n \leq C^k\leq  n/C$,
\[ \proba{M^{n}_{k}>  k^3  } \leq S^n_{C^k} 2 e^{-k^2/\mathcal{O}(1)}\leq 1/k^2. \qedhere\]}
\end{proof}

\section{Application: contact-tracing in a stochastic SIR dynamics}
\label{sec:SIR}

Our results can be applied to study the so-called ``infection tree'' of a stochastic SIR dynamics, which is a classical model for the evolution of epidemics (for background on stochastic  epidemic models, see \cite{AB12,BPBLST19}). We assume that initially there is $1$ infectious individual (that has just become infected) and $n$ susceptible individuals. The infectious periods of different infectives are i.i.d  distributed according to an exponential random variable of parameter $1$. During {its} infectious period an infective makes contacts with a given individual at the time points of a time homogeneous Poisson process with intensity $\lambda_{n}$. If a contacted individual is still susceptible, then {it} becomes infectious and is immediately able to infect other individuals. An individual is considered ``removed'' once {its} infectious period has terminated, and is then immune to new infections, playing no further part in the epidemic spread. The epidemic ceases as soon as there are no more infectious individual present in the population. All Poisson processes are assumed to be independent of each other; they are also independent of the infectious periods.

We call a ``step'' an event where either a susceptible individuals becomes infective, or where an {individual's} infectious period terminates. Denote by $\tau_{n}$  the number of steps made when the epidemic ceases. For $0 \leq k \leq \tau_{n}$, let ${\mathscr{T}^{n}_{k}}$ be the infection tree after $k$ steps,  in which the vertices are individuals and where {an edge is present} between two individuals if one has infected the other. We are interested in the evolution of the associated ``infection tree'' $( {\mathscr{T}_k^{n}})_{0 \leq k \leq  \tau_{n}}$, as well as in the shape of the full infection tree $ {\mathscr{T}^{n}_{\tau_{n}}}$   when the epidemic ceases.

Let $(U^{n}_{k}, I^{n}_{k})_{k \geq 0}$ be a Markov chain with initial state $(U^{n}_{0},I^{n}_{0})=(n,1)$ and  transition probabilities given by
\[(U^{n}_{k+1},I^{n}_{k+1}) = \begin{cases}
	(U^{n}_{k}-1,I^{n}_{k}+1)  &\textrm{with probability } \frac{\lambda_{n} U^{n}_{k}}{1+ \lambda_{n} U^{n}_{k}}\\
	(U^{n}_{k},I^{n}_{k}-1)  &\textrm{with probability } \frac{1}{1+\lambda_{n} U^{n}_{k}}
\end{cases}
\]
with  $ \{(k,0) : 0 \leq k \leq n\}$ as absorbing states {(we use ``$U$'' for ``uninfected'')}.  Observe that evolution of   the number of susceptible individuals and the number of infectious individuals in the infection tree evolves according to this Markov chain. Then define the random sequence $\XXb^{n}=(\XX^{n}_{i})_{1 \leq i \leq \tau'_{n}}$ of $\pm 1$ as follows:  let $\tau'_{n}$ be the absorption time of the Markov chain, and for $1 \leq i \leq \tau'_{n}$ set $\XX^{n}_{i}=I^{n}_{i}-I^{n}_{i-1}$.

Then by construction, it is clear that:
\[
({\mathscr{T}^{n}_{k}})_{0 \leq k \leq \tau_{n}}  \quad \mathop{=}^{(d)}  \quad  (\mathcal{T}_{k}(\XXb^n))_{ 0 \leq k \leq \tau'_{n}}. 
\]

To simplify notation, let $ {\mathscr{T}^{n}}= {\mathscr{T}^{n}_{\tau_{n}}}$ be the full infection tree when the epidemic ceases. We also identify the random variables in the above equality in distribution. {For all $p \in [0,1)$, we denote by $\mathcal{G}(1-p)$ the geometric distribution $\mu$ given by $\mu(k) = (1-p)p^k$ for every integer $k\ge 0$.}
\begin{theorem}
	\label{thm:SIRlocallimit}
	The following assertions hold.
	\begin{enumerate}
		\item[{(I)}] Assume that   $\lambda_n \sim\lambda/n$ for some $\lambda >0$. Then $ {\mathscr{T}^n}$ converges in distribution locally towards a Bienaym\'e tree with offspring distribution $\mathcal{G}(1/(1+\lambda))$. 
		\item[{(II)}] Assume that  $\lambda_n \gg 1/n$. Then $ {\mathscr{T}^n}$ is not tight for the topology of the local convergence.
	\end{enumerate}
\end{theorem}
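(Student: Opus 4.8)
The plan is to work throughout with the distributional identity recorded just before the statement, namely $\T^n\eqloi\T_{\tau'_n}(\XXb^n)$, where $\XXb^n$ is the $\pm1$-sequence read off the Markov chain $(H^n_k,I^n_k)$; I will identify this as an equality and write $\T^n_k\coloneqq\T_k(\XXb^n)$, so that $\T^n=\T^n_{\tau'_n}$. Recall that $S_i(\XXb^n)=I^n_i$ is the number of infectious (i.e.\ active) vertices, that $\tau'_n$ is the first time it vanishes, that the number of susceptibles $H^n_k$ is non-increasing with $H^n_k\ge n-k$ for all $k$, and that the uniform attachment/freezing choices performed inside Algorithm \ref{algo1} are independent of $\XXb^n$.

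\textbf{Part (1).} Set $p\coloneqq\lambda/(1+\lambda)\in[0,1)$ and let $\XXb=(\X_i)_{i\ge1}$ be i.i.d.\ with $\proba{\X_1=+1}=p$. By (the proof of) Theorem \ref{thm:BGW}, $(\T_k(\XXb))_{k\ge0}$ converges locally a.s.\ to a Bienaym\'e tree $\T_\infty(\XXb)$ with offspring distribution $\mu(k)=(1-p)p^k$, that is $\mathcal G(1/(1+\lambda))$; here, when $p>1/2$, the a.s.\ convergence uses that on $\{\tau(\XXb)=\infty\}$ one has $S_i(\XXb)\sim(2p-1)i$ by the law of large numbers, so $\sum_i\1_{\{\X_i=-1\}}/S_i(\XXb)=\infty$ and condition~(3) of Theorem \ref{thm:cvlocale} holds. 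It therefore suffices to prove $\T^n\to\T_\infty(\XXb)$ in distribution for the local topology. Fix $h\ge1$. I would first verify that, for the i.i.d.\ dynamics, the truncated ball at height $h$ stabilises a.s.\ at a finite random time $K_h$: this is an induction on $h$, the root freezing at an a.s.-finite time (base case), and, once every vertex at graph distance $\le h$ from the root is frozen — which happens a.s., being exactly the equivalence (2)$\Leftrightarrow$(3) established in the proof of Theorem \ref{thm:cvlocale} together with the divergence above — no vertex can ever again be attached within distance $h+1$, so the now-fixed finite set of vertices at distance $\le h+1$ becomes entirely frozen after one further a.s.-finite time. Then, given $\e>0$, I would pick a \emph{deterministic} $K_0$ with $\proba{K_h>K_0}<\e$ and couple the first $K_0$ steps of $\XXb^n$, together with the corresponding attachment/freezing choices, to those of $\XXb$: since $H^n_{k-1}\in[n-K_0,n]$ for $k\le K_0$ and $\lambda_n H^n_{k-1}\to\lambda$ uniformly over such $k$, the per-step transition probabilities of $\XXb^n$ tend to $p$ uniformly, hence $\proba{\XX^n_k=\X_k\ \text{for all}\ k\le K_0}\to1$, and on this event $(\T^n_k)_{k\le K_0}=(\T_k(\XXb))_{k\le K_0}$. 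On the intersection with $\{K_h\le K_0\}$, every vertex at distance $\le h$ of the root of $\T^n_{K_0}$ is frozen, so no later step changes the height-$h$ ball of $\T^n_k$ and consequently $\br{\T^n}_h=\br{\T_{K_0}(\XXb)}_h=\br{\T_\infty(\XXb)}_h$. Letting $n\to\infty$ and then $\e\to0$ gives $\br{\T^n}_h=\br{\T_\infty(\XXb)}_h$ with probability tending to $1$, for every $h$, which is the claimed local convergence.

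\textbf{Part (2).} Assume $n\lambda_n\to\infty$. The key elementary observation is that the first $2m$ steps of $\XXb^n$ are all equal to $+1$ with high probability: on that event $S_k(\XXb^n)=k+1>0$, so $H^n_{k-1}\in[n-2m,n]$ for $k\le2m$, whence $\proba{\XX^n_k=+1\mid\text{past}}=\lambda_n H^n_{k-1}/(1+\lambda_n H^n_{k-1})\ge\lambda_n(n-2m)/(1+\lambda_n(n-2m))\to1$ because $\lambda_n(n-2m)\to\infty$; thus $\proba{E^n_m}\to1$, where $E^n_m\coloneqq\{\XX^n_1=\dots=\XX^n_{2m}=+1\}$. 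Conditionally on $E^n_m$, the tree $\T^n_{2m}$ is obtained by $2m$ uniform attachments starting from a single vertex, i.e.\ it is a uniform recursive tree on $2m+1$ vertices (the conditioning, which only constrains $\XXb^n$, does not alter the attachment choices), and since $\tau'_n>2m$ on $E^n_m$ and vertex degrees never decrease along Algorithm \ref{algo1}, we have $\deg_{\T^n}(\text{root})\ge\deg_{\T^n_{2m}}(\text{root})$ there. The root degree of a uniform recursive tree on $2m+1$ vertices is distributed as $\sum_{i=1}^{2m}\mathrm{Ber}(1/i)$ with independent summands, which tends to $+\infty$ a.s.\ as $m\to\infty$. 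Hence $\proba{\deg_{\T^n}(\text{root})\le M}\le\proba{(E^n_m)^c}+\proba{\sum_{i=1}^{2m}\mathrm{Ber}(1/i)\le M}$ for every $m$, and letting $n\to\infty$ then $m\to\infty$ yields $\deg_{\T^n}(\text{root})\to+\infty$ in probability, for each $M$. Finally, were $(\T^n)_n$ tight for the local topology, Prokhorov's theorem would produce a subsequence converging in distribution to some a.s.\ locally finite random tree; along it the continuous functional ``degree of the root'' would converge in distribution to an a.s.\ finite random variable, contradicting $\deg_{\T^n}(\text{root})\to+\infty$ in probability. Therefore $(\T^n)_n$ is not tight.

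\textbf{Expected main obstacle.} In Part (1) the delicate point is the interplay between the \emph{random} stabilisation time $K_h$ of the ball and the fact that the coupling can be controlled only up to a fixed deterministic horizon; this is handled by the standard device of truncating at a large deterministic $K_0$. Establishing that $K_h<\infty$ a.s.\ rests, through Theorem \ref{thm:cvlocale}, on the fact that every vertex freezes a.s., which for the i.i.d.\ walk is a law-of-large-numbers statement. Part (2) is comparatively soft once the initial burst of $+1$'s is noticed; the only point needing care is the appeal to Prokhorov to upgrade tightness to a genuine subsequential limit supported on locally finite trees, ensuring the limiting root degree is finite.
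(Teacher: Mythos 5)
Your proof is correct and follows the same overall strategy as the paper's, namely coupling the increment sequence $\XXb^n$ to the i.i.d.\ sequence $\XXb$ over a bounded horizon and then invoking the Bienaym\'e identification from Theorem~\ref{thm:BGW}; the paper likewise constructs a coupling of $(I^n_k)_{0\le k\le k_0}$ with the limiting walk up to a fixed time $k_0$. Where you genuinely add something is in making the transfer step explicit: the paper simply asserts that by Theorem~\ref{thm:cvlocale} it suffices to prove convergence of the walks, even though Theorem~\ref{thm:cvlocale} is stated for deterministic input sequences that eventually stabilise coordinate-wise, and it concerns $\T_n(\Xb^n)$ rather than the stopped tree $\T_{\tau'_n}(\XXb^n)$; your device of the a.s.-finite stabilisation time $K_h$ of the height-$h$ ball, together with truncation at a deterministic $K_0$, fills in exactly the argument that the paper leaves implicit. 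Likewise your Part~(2) is more self-contained: you obtain directly that the root degree diverges in probability by exhibiting an initial burst of $+1$'s, coupling to a uniform recursive tree, and then invoking Prokhorov, whereas the paper delegates this to Theorem~\ref{thm:cvlocale}; your route is slightly stronger in that it transparently gives the stated conclusion (non-tightness), whereas Theorem~\ref{thm:cvlocale} on its face only rules out local convergence and one must look inside its proof to extract the degree-explosion that yields non-tightness.
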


\begin{proof}
	
	For {(I)},  assume that $\lambda_n \sim \lambda/n$ as $n \to \infty$ for some $\lambda >0$. {We first prove} that $(I^n_k)_{k\ge 0}$ converges in distribution for the product topology towards the random walk $(S_k)_{k\ge 0}$ such that $S_{k+1}-S_k=1$ with probability $\lambda/(1+\lambda)$ and $S_{k+1}-S_k=-1$ with probability $1/(1+\lambda)$. Indeed, the local convergence of $\T_n(\XXb^n)$ implies the local convergence of ${\mathscr{T}^n}$. So as to achieve this, one can first see that since $U^n_k \le n$ for all $k \ge 0$, we have
	\[
	\frac{\lambda_n U^n_k}{1+\lambda_n U^n_k} \le \frac{n \lambda_n}{1+ n \lambda_n} \cv[n] \frac{\lambda}{1+\lambda}.
	\]
	But we also have
	\[
	\inf_{k \le \sqrt{n}} \frac{\lambda_n U^n_{k}}{1+ \lambda_n U^n_{k}}
	\ge \frac{\lambda_n (n-\sqrt{n})}{1+ n\lambda_n } \cv[n] \frac{\lambda}{1+\lambda}.
	\]
	For all $k_0 \ge 0$, one can thus build a coupling between $(I^n_k)_{0\le k\le k_0}$ and $(S_k)_{0\le k\le k_0}$ such that the two walks coincide w.h.p. until time $k_0$ as $n \to \infty$. 
	
	{Thus, by Skorokhod's representation theorem we may assume that almost surely, for every $k_{0} \geq1$, $(I^n_k)_{0\le k\le k_0}=(S_k)_{0\le k\le k_0}$ for $n$ sufficiently large. By Theorem \ref{thm:cvlocale}, ${\mathscr{T}^n}$ converges locally in distribution to a uniform attachment tree with freezing built using the sequence $(S_{k+1}-S_k)_{k \geq 0}$. By Theorem \ref{thm:BGW}, this is a Bienaym\'e tree with offspring distribution $\mathcal{G}(1/(1+\lambda))$. This proves {(I)}.}
	
{For {(II)}, we argue by contradiction and assume that $({\mathscr{T}^n})_{n \geq 0}$ is tight. By Prokhorov's theorem, let $\varphi$ be an extraction such that $(\mathscr{T}^{\varphi(n)})_{n \geq 0}$   converges in distribution. Similarly to the proof of (I), observe that  $(I^{\varphi(n)}_k)_{k\ge 0}$ converges in distribution with respect to the product topology towards $(k+1)_{k\ge 0}$ since for all fixed $k \ge 0$, the transition probability ${\lambda_n U^{\varphi(n)}_k}/({1+\lambda_{\varphi(n)} U^{\varphi(n)}_k})$ converges to $1$ as $n\to \infty$. Thus, by Skorokhod's representation theorem we may assume that almost surely, for every $k_{0} \geq1$, $(I^{\varphi(n)}_k)_{0\le k\le k_0}=(k+1)_{0\le k\le k_0}$ for $n$ sufficiently large. But if $\mathbf{x}_{i}=1$ for every $i \geq 1$, the sum  $\sum_{i\geq 1} \frac{1}{S_i(\Xb)} \mathbbm 1_{\{\X_i=-1\}}$ converges, so by Theorem \ref{thm:cvlocale} the sequence $({\mathscr{T}^{\varphi(n)}})_{n \geq 0}$ does not converge locally in distribution, a contradiction.}
\end{proof}

\begin{theorem}
	\label{thm:SIR}
	The following assertions hold.
	\begin{enumerate}
		\item[{(I)}] Assume that $\lambda_n \sim\lambda/n$ for some $\lambda> 1$. Let $A$ be the event of survival of a Bienaym\'e process with offspring distribution $\mathcal{G}(1/(1+\lambda))$.
		\begin{enumerate}
			\item[(a)]The convergence
			\begin{equation}
				\label{limite fluide I}\left(\frac{I^n_{\lfloor nt \rfloor}}{n}\right)_{t \ge 0} \cvloi 
				\left( \max(2-2g_\lambda(t)-t,0) \mathbbm{1}_{A}\right)_{t\ge 0},
			\end{equation}
			holds in distribution for the topology of uniform convergence on compact sets, where {$g_\lambda$} is the solution of the ordinary differential equation $g_\lambda'(t)=-\lambda g_\lambda(t)/(1+\lambda g_\lambda(t))$ with $g_\lambda(0)=1$.
			\item[(b)] For all $n\ge 1$, conditionally given  $\mathscr{T}^n$, let $V_{1}^n$   and $V_{2}^{n}$ be independent uniform vertices of $\mathscr{T}^n$. Then
			\begin{equation}
				\label{eq:cvprobaHn SIR}
				\frac{\H(V_{1}^n)}{\ln n} \cvloi \frac{\lambda}{\lambda-1}  \mathbbm{1}_{A}\qquad \textrm{and} \qquad  \frac{d^n(V^n_1,V^n_2)}{\ln n} \cvloi  \frac{2\lambda}{\lambda-1}  \mathbbm{1}_{A},
			\end{equation}
			where $\H(V_{1}^n)$ is the height of $V_{1}^{n}$ in $\mathscr{T}^n$ and $d^{n}$ denotes the graph distance in $\mathscr{T}^n$.
		\end{enumerate}
		\item[{(II)}] Assume that  $\lambda_n \gg 1/n$. 
		\begin{enumerate}
			\item[(a)] The convergence
			\begin{equation}\label{eq:limite fluide presque recursif}
				\left(\frac{I^n_{\lfloor nt \rfloor}}{n}\right)_{0\le t \le 2} \cvproba (\min(t,2-t))_{0\le t \le 2},
			\end{equation}
			holds in probability for the topology of uniform convergence.
			\item[(b)] We have
			\begin{equation}\label{eq:cv Hn cas presque recursif} \frac{\H(V_{1}^n)}{\ln n} {\cvproba}  1 , \qquad  \frac{d^n(V^n_1,V^n_2)}{\ln n} {\cvproba} 2 \qquad \textrm{and} \qquad \frac{\Height(\mathscr{T}^n)}{\ln(n)} \cvproba e.
			\end{equation}
		\end{enumerate}
	\end{enumerate}
\end{theorem}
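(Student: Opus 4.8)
\emph{Proof proposal.} The whole statement is transported to the Markov chain $(H^n_k,I^n_k)_k$ and its sign sequence $\XXb^n=(\XX^n_i)_{i}$, using the identity $(\mathcal T^n_k)_{0\le k\le\tau_n}\overset{(d)}{=}(\T_k(\XXb^n))_{0\le k\le\tau'_n)}$ already recorded before the statement, so that $S^n_i:=I^n_i$ is the number of active vertices of $\mathcal T^n_i$. I begin with the two fluid limits. When $\lambda_n\gg1/n$, writing $N^-_k$ for the number of $-1$'s among $\XX^n_1,\dots,\XX^n_k$, one has $H^n_k=n-k+N^-_k$ and $I^n_k=1+k-2N^-_k$, and $\E[N^-_{\lfloor(1-\delta)n\rfloor}]\le\sum_{k\le(1-\delta)n}(1+\lambda_n H^n_k)^{-1}=O(1/\lambda_n)=o(n)$ because $n\lambda_n\to\infty$ and $H^n_k\ge\delta n$ up to that time; Markov's inequality gives $N^-_{\lfloor nt\rfloor}=o_{\P}(n)$ for $t<1$, hence $I^n_{\lfloor nt\rfloor}/n\to t$, while after everyone is infected (at step $n+o_{\P}(n)$) only $-1$ steps remain, so $I^n_{\lfloor nt\rfloor}/n\to 2-t$ for $t>1$, and monotonicity handles $t=1$; this is \eqref{eq:limite fluide presque recursif}. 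When $\lambda_n\sim\lambda/n$ with $\lambda>1$, I would couple, for each fixed horizon, the early increments of $(I^n_k)$ with the $\pm1$ walk $(S_k)$ having $\P(S_1=1)=\lambda/(1+\lambda)$ (as in the proof of Theorem~\ref{thm:SIRlocallimit}); its non-extinction event has the law of the survival event $A$ of a $\mathcal G(1/(1+\lambda))$ Bienaymé process. On $A^c$ the walk, hence $I^n$, dies out in bounded time, so $I^n_{\lfloor nt\rfloor}/n\to0$; on $A$, since $\lambda>1$ the chain $I^n$ dominates, while below $\e n$, a walk with positive drift, so it reaches level $\e n$ in time $o(n)$ with probability $\to\P(A)$, whence the classical density-dependent fluid-limit theorem applies and yields $(H^n_{\lfloor nt\rfloor}/n,I^n_{\lfloor nt\rfloor}/n)\to(g(t),\iota(t))$ with $g'=-\lambda g/(1+\lambda g)$, $g(0)=1$ and $\iota(t)=\max(-2(g(t)-1)-t,0)=\max(2-2g(t)-t,0)$; letting $\e\to0$ removes the dependence on the starting level. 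This gives \eqref{limite fluide I}.

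For the heights and distances, let $A_n$ be the event that the epidemic is ``large'' (say $I^n$ reaches $\e n$): by the above, $\P(A_n)\to\P(A)$ in case (1) and $\P(A_n)\to1$ in case (2). On $A_n^c$ the tree $\mathcal T^n$ has $O_{\P}(1)$ vertices (total progeny of a (sub)critical branching process has exponential tails), so all heights and distances are $O_{\P}(1)$ and, divided by $\ln n$, tend to $0$. Work now on $A_n$. The key observation is that in both regimes $(S^n_i)=(I^n_i)$ satisfies the first requirement of \eqref{eq:assumptions} on its whole initial linear stretch $A'_n\le i\le\e n$ (for a suitable $A'_n\to\infty$, $A'_n=o(\ln n)$), with $c=(\lambda-1)/(\lambda+1)$ in case (1) — the drift of the early random walk, since for $i\le\e n$ the step probabilities lie in an interval shrinking to $\lambda/(1+\lambda)$ as $\e\to0$ — and $c=1$ in case (2), where $I^n_i=i+o(i)$ uniformly because $N^-_i=o(i)$ uniformly by concentration. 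Fix a horizon $N=\lfloor\theta n\rfloor$ with $\theta\in(0,t^*)$ in case (1) (with $t^*:=\inf\{t:\iota(t)=0\}$) and $\theta=1-\delta$ in case (2); then on $A_n$ the second requirement of \eqref{eq:assumptions} holds at horizon $N$ because $I^n_i/n\to\iota(i/n)$ uniformly on the compact $[\e\theta,\theta]\subset(0,t^*)$ where $\iota>0$ (resp. $I^n_i\ge i-o(n)\ge(\e/2)N$ in case (2)). Theorem~\ref{thm:lineaire}(1)--(2) applied to $\T_N(\XXb^n)=\mathcal T^n_N$ then gives $\h^+_N\sim\frac{c+1}{2c}\ln n$ and, for independent uniform vertices $W_1,W_2$ of $\mathcal T^n_N$, $\H(W_1)/\ln n\to\frac{c+1}{2c}$ in $\Lp^p$ and $d^n(W_1,W_2)/\ln n\to\frac{c+1}{c}$ in $\Lp^p$ (Lemma~\ref{lemme ça coalesce en zéro}), with $\frac{c+1}{2c}=\frac{\lambda}{\lambda-1}$, $\frac{c+1}{c}=\frac{2\lambda}{\lambda-1}$ in case (1) and $=1$, $=2$ in case (2); note also that the computation behind Lemma~\ref{loi du temps de coalescence}, fed with $I^n_i\asymp i$, gives $\P(\coal_N(W_1,W_2)\ge m)=O(1/m)$, so $\coal_N(W_1,W_2)$ is tight and the LCA of $W_1,W_2$ has height $o_{\P}(\ln n)$.

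It remains to pass from $\mathcal T^n_N$, an induced subtree of $\mathcal T^n$ on which distances agree, to $\mathcal T^n$. A uniform vertex $V^n_1$ of $\mathcal T^n$ has, by Lemma~\ref{lem:bunif} (with $I^n_{\tau_n}=0$), $\P(\birth_{\tau_n}(V^n_1)<m)=(m+1-I^n_m)/(\tau_n+1)$; conditionally on $\{\birth_{\tau_n}(V^n_1)\le N\}$, an event of probability $\to(\theta-\iota(\theta))/t^*$, it is a uniform vertex of $\mathcal T^n_N$, and on this event the previous paragraph applies. For the remaining vertices I would use \eqref{eq:H0}: $\H(V^n_1)\overset{(d)}{=}\sum_{i\le\birth_{\tau_n}(V^n_1)}Y_i\1_{\{\X^n_i=1\}}$ with $Y_i\sim\mathrm{Ber}(1/I^n_i)$, combined with the estimate $\h^+_m=\frac{c+1}{2c}\ln n+O(1)$ uniformly over $A'_n\le m\le\tau_n-\gamma n$ (the piece of the sum over $i>\e n$ is $O(1)$ there since $I^n_i\ge\gamma' n$), Bennett's inequality (Proposition~\ref{prop:bennett}), and the fact that $\P(\birth_{\tau_n}(V^n_1)>\tau_n-\gamma n)\to0$ as $\gamma\to0$ (same display of Lemma~\ref{lem:bunif}); this yields $\H(V^n_1)/\ln n\to\frac{c+1}{2c}$ in probability on $A_n$, hence $\cvloi$ to $\frac{\lambda}{\lambda-1}\1_A$ in case (1), and in case (2), where moreover $\h^+_{\tau_n}\sim\ln n$ is bounded (no $+1$ step survives once $H^n=0$), the $\Lp^p$ convergence to $1$. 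The distance is treated identically, together with $\H(\mathrm{LCA})=o_{\P}(\ln n)$, the tightness of $\coal_{\tau_n}(V^n_1,V^n_2)$ again following from Lemma~\ref{loi du temps de coalescence}, the late ``descent'' phase contributing only $O(1/n)$ because there $\P(\birth_{\tau_n}(V^n_1)>c)=O((\tau_n-c)/n)$ offsets $\binom{I^n_{c+1}}{2}^{-1}$. Finally, for $\Height(\mathcal T^n)$ in case (2): the lower bound $\Height(\mathcal T^n)\ge\Height(\mathcal T^n_N)\sim e\ln n$ comes from Theorem~\ref{thm:lineaire}(3) with $c=1$ (and $f(1)=e$), and the upper bound follows as in the proof of Theorem~\ref{thm:height}(2), bounding $\H(v)$ in distribution by $\sum_{i\le\tau_n}Y_i\1_{\{\X^n_i=1\}}$ and using a union bound over the $\approx n$ vertices with $\h^+_{\tau_n}\sim\ln n$ and Bennett, since $g(e-1)=1$.

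The main obstacle is exactly this last transfer: the second hypothesis of \eqref{eq:assumptions} genuinely fails at horizon $\tau_n$ (the walk $I^n$ returns to $0$), so Theorem~\ref{thm:lineaire} cannot be applied directly to $\mathcal T^n$; one must choose the intermediate horizon $N$ with care and, via the birth-time law of Lemma~\ref{lem:bunif} and Bennett's inequality, show that only a vanishing proportion of vertices are born in the last $\gamma n$ steps and that those vertices still have height $\frac{c+1}{2c}\ln n+O(\ln n)$, the extra term being $o(\ln n)$ in probability — and, in case (2) only, in $\Lp^p$.
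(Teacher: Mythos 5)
Your proposal follows essentially the same route as the paper: establish the fluid limits for $(H^n,I^n)$, observe that on the relevant event the assumptions \eqref{eq:assumptions} hold (with $c=(\lambda-1)/(\lambda+1)$ in case~(1) and $c=1$ in case~(2)) on an initial linear stretch, apply Theorem~\ref{thm:lineaire} to a truncated tree $\mathcal{T}^n_N$ at an intermediate horizon $N$, and then transfer to the full tree $\mathcal{T}^n$ by showing (via the birth-time law of Lemma~\ref{lem:bunif}) that a vanishing fraction of vertices is born after $N$ and that the tail of the sum $\h^+$ is $o(\ln n)$. Your treatment of the extinction event as $\1_A$ via a coupling with biased walks, and your formula $I^n_k=2(n-H^n_k)-k+1$ leading to $\iota(t)=\max(2-2g(t)-t,0)$, coincide with the paper's argument.

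The only spot that deserves more care is the upper bound on $\Height(\mathcal{T}^n)$ in case~(2)(b). You assert $\h^+_{\tau_n}\sim\ln n$ and justify the truncated piece only for $m\le\tau_n-\gamma n$; the contribution of the last $O(n)$ steps, where $I^n$ returns to $0$ and a few late ``$+1$'' steps can occur at small values of $I^n$, is not automatically bounded by ``no $+1$ survives once $H^n=0$.'' The paper handles this explicitly by time-reversal: setting $\hat S^n_i=I^n_{\tau'_n-i}$ and $\hat\X^n_i=-\X^n_{\tau'_n-i}$, it shows the reversed walk grows linearly with slope $1$ near $0$, so the quantity $\h^n=\sum_{i\ge n}(1/I^n_i)\1_{\{\X^n_i=1\}}$ (which is the analogue of $\h^-$ for the reversed walk) is $o(\ln n)$ by the same computation as in Theorem~\ref{thm:lineaire}(1). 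This is the one lemma your sketch implicitly relies on without isolating it; everything else matches the published argument.
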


\begin{figure}[!ht]
	\label{fig:SIR2}
	\centering
	\includegraphics[scale=0.35]{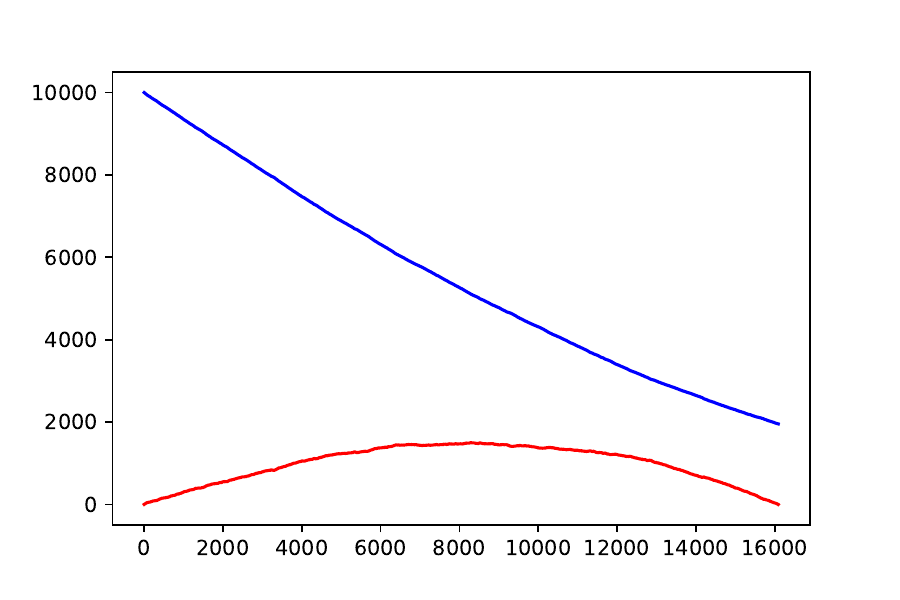}
	\includegraphics[scale=0.42]{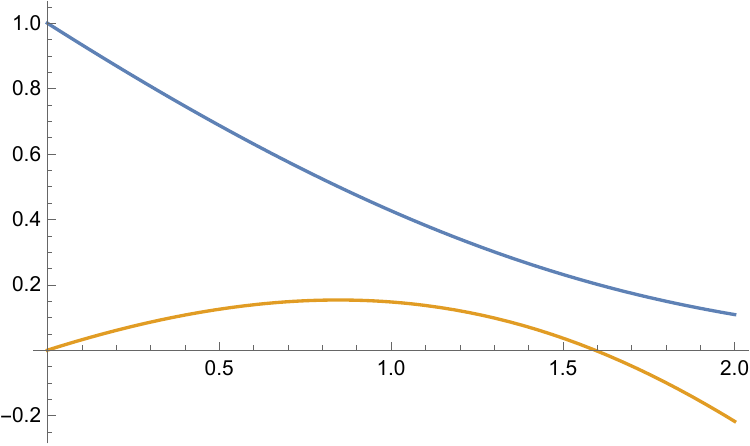}
	\includegraphics[scale=0.35]{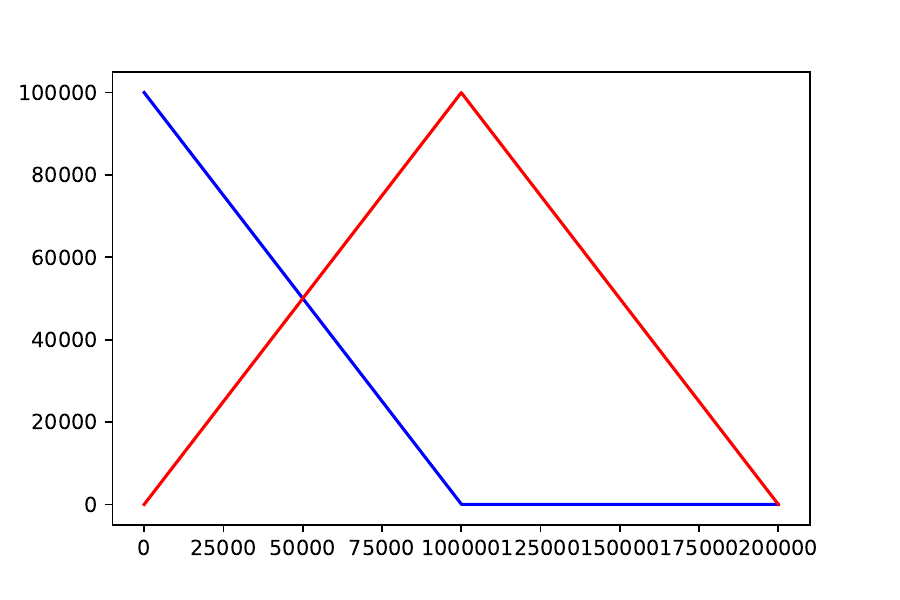}
	\caption{Left: simulation of $(U^{n},I^{n})$ for $\lambda_n=2/n$ and $n=10000$ ($U^{n}$ in blue and $I^{n}$ in red). Centre: the corresponding fluid limit. Right: simulation of $(U^{n},I^{n})$ for $\lambda_n=2$ and $n= 100000$.}
\end{figure}

{We observe that the solution $g_\lambda$ of the differential equation can be expressed using the principal branch of the Lambert $W$ function defined as the inverse function $W: [-1/e, \infty)\to [-1,\infty)$ of the function $x \mapsto x e^x$ which is increasing on $[-1,\infty)$. See e.g.\@ \cite{CGHJK96} for more details about this function.  Explicitly, using the fact that $W'(x) =  W(x)/(x(1+W(x)))$, we check that for all $t \ge 0$,
	\begin{equation}\label{eq fonction de Lambert}
		g_\lambda(t) = \frac{1}{\lambda} W(\lambda e^{\lambda}e^{-\lambda t}).\end{equation}
In particular, $g_\lambda$ is infinitely differentiable and one can see that $g_\lambda$ is convex, decreasing and goes to $0$ at $\infty$.}

{ 
	\begin{remark}
		\label{rem:SIR}
		It would be very interesting to obtain a limit theorem for $\Height(\mathscr{T}^n)$ in the setting of Theorem \ref{thm:SIR} {(I)} (b). This requires more work, because the behavior of the number of infected/active vertices near the extinction time $\tau'_{n}$ should give a non-negligible contribution to the height.
	\end{remark}
}

\begin{proof}[Proof of Theorem \ref{thm:SIR}]
	For {(I)} (a), we assume $\lambda>1$ and we determine the fluid limit of the chain $(U^n,I^n)$. Let $(\widetilde{U}^n,\widetilde{I}^n)$ be a Markov chain which has the same initial conditions as $(U^n,I^n)$ and the same transition probabilities but which is not stopped when $\widetilde{I}^n$ reaches zero. 
	We check that \eqref{limite fluide I} holds using the classical theory of fluid limits of Markov chains (see e.g. \cite{Kur81} or \cite{DN08}), and more precisely a combination of Grönwall's inequality with Doob's maximal inequality. 
	
	We claim that it is enough to show that
	\begin{equation}\label{limite fluide H}
		\left(\frac{\widetilde{U}^n_{\lfloor nt \rfloor}}{n}\right)_{t \ge 0} \cvproba[n] 
		\left(g_\lambda(t)\right)_{t\ge 0},
	\end{equation}
	uniformly on compacts. Indeed, it is easy to see that $\widetilde{I}^n_k= 2(n-\widetilde{U}^n_k) -k+1$ so that \eqref{limite fluide H} entails the convergence
	\begin{equation}\label{limite fluide I tilde}
		\left(\frac{\widetilde{I}^n_{\lfloor nt \rfloor}}{n}\right)_{t \ge 0} \cvproba[n] 
		\left(2-2g_\lambda(t)-t \right)_{t\ge 0},
	\end{equation}
	uniformly on compacts. Let us check that this implies  \eqref{limite fluide I}. Since $t\mapsto 2- 2g_\lambda(t)-t$ is concave, starts from zero at time zero, stays positive and then goes below zero, it is thus enough to identify the limit of the probability that the chain $\widetilde{I}^n$ reaches zero before time $\e n$ as {first} $n\to \infty$ and {then} $\e \to 0$ as  the probability of extinction of a $\mathcal{G}(1/(1+\lambda))$-Bienaym\'e process. To this end, for all $\e >0$, for $n$ large enough, for all $k\le \e n$, since $n\ge \widetilde{U}^n_k\ge (1-\e)n$ we have
	\[
	\frac{\lambda}{1+\lambda} -\e  \le 
	(1-\e)\frac{n \lambda_n}{1+ n \lambda_n} \le \frac{\lambda_n \widetilde{U}^n_k}{1+\lambda_n \widetilde{U}^n_k}
	\le \frac{n \lambda_n}{1+ n \lambda_n} 
	\le \frac{\lambda}{1+\lambda} +\e.
	\]
	Thus, one can couple $(I^n, U^n)$ until time $\e n$ with two simple random walks $(\overline{Y}^\e_k)_{k\ge 0}$ and $(\underline{Y}^\e_k)_{k\ge 0}$ starting from one such that
	\[
	\P(\overline{Y}^\e_1 = 1) =  \frac{\lambda}{1+\lambda}  + \e \qquad \text{and} \qquad \P(\underline{Y}^\e_1 = 1) =\frac{\lambda}{1+\lambda} -\e
	\]
	so that for all $n$ large enough, for all $k\le \e n$,
	\begin{equation}\label{eq couplage marches}
		\underline{Y}^\e_k \le \widetilde{I}^n_k \le \overline{Y}^\e_k.
	\end{equation}
	Since the probabilities that the random walks $\underline{Y}^\e$ and $\overline{Y}^\e$ starting from $1$ reach the negative integers {both} converge as $\e \to 0$ to the probability of extinction of a $\mathcal{G}(1/(1+\lambda))$-Bienaym\'e process, we get the desired result.

	It remains to check \eqref{limite fluide H}. This is a rather direct application of Lemma 6.5 in \cite{Cur} (which remains clearly true if we add a superscript $n$ to all the objects). More precisely, let $t>0$, for all $k\ge 0$ let $X^n_k=\widetilde{U}^n_k - ng_\lambda(k/n)$ and let $(\mathcal{F}^n_k)_{k\ge 0}$ be the natural filtration associated with $X^n$. This lemma states {(in greater generality)} that if there exists a constant $C>0$ such that for all $n\ge 1$, for all $0\le k \le nt$, 
	\begin{equation}\label{hypothèse Gronwall stochastique}
		\left\lvert
		\E
		\left(\left. X^n_{k+1}-X^n_k \right\vert \mathcal{F}^n_k \right)
		\right\rvert
		\le \frac{C}{n} (1+\sup_{0\le j \le nt} \lvert X^{n}_j \rvert) \qquad
		\text{a.s. and} \qquad
		\E\left( \left( X^n_{k+1}-X^n_k\right)^2\right) \le C,
	\end{equation}
	then 
	$
	\sup_{0\le k \le nt}  {\lvert X^n_k \rvert}/{n} \rightarrow 0
	$
	in probability as $n \to \infty$, {which implies} \eqref{limite fluide H}. 
	
	To show \eqref{hypothèse Gronwall stochastique}, we first compute
	\[
	\E\left( \left.  X^n_{k+1}-X^n_k \right\vert \mathcal{F}^n_k \right)
	=- \frac{\lambda \widetilde{U}^n_k / n}{1 + \lambda \widetilde{U}^n_k/n} - n\left(g_\lambda\left(\frac{k+1}{n} \right) - g_\lambda\left( \frac{k}{n}\right) \right).
	\]
	{Moreover, since the function $g_\lambda$ is twice differentiable, } using {a Taylor expansion} at order $2$ and using the definition of $g_\lambda$ one gets that{
		\begin{align*}
			\left\lvert
			\E
			\left( \left.
			X^n_{k+1}-X^n_k + \frac{\lambda \widetilde{U}^n_k / n}{1 + \lambda \widetilde{U}^n_k/n} -\frac{\lambda g_\lambda(k/n)}{1+\lambda g_\lambda(k/n)}
			\right\vert \mathcal{F}^n_k
			\right)
			\right \rvert 
			&= n\left\vert  g_\lambda\left(\frac{k+1}{n} \right) - g_\lambda\left( \frac{k}{n}\right) + \frac{1}{n}\frac{\lambda g_\lambda(k/n)}{1+\lambda g_\lambda(k/n)} \right\vert\\
			&\le n \frac{1}{n^2} \sup_{0\le s \le t+1} \vert g''_\lambda(s)\vert\\
			&\le \frac{C}{n},
	\end{align*}}
	for some constant $C>0$.
	Moreover, the function $x \mapsto x/(1+x)$ is $1$-Lipschitz on $[0,\infty)$. Thus, the first inequality of \eqref{hypothèse Gronwall stochastique} is satisfied. The second one is clearly satisfied since $ \widetilde{U}^n_{k+1}-\widetilde{U}^n_k \in \{0,-1\}$ and since $g_\lambda$ is differentiable {and $g'_\lambda$ is bounded on $[0,t+1]$}.
	
	We now turn to {(I)} (b). We essentially apply Theorem \ref{thm:lineaire} with $c=(\lambda-1)/(\lambda+1)$ and $A_n= \log \log n$. However, the rub is that the fluid limit {$t\mapsto \max(2-2g_\lambda(t)-t, 0)$ of $I^n$ reaches zero at some time $t_0$ which is the unique positive time at which $2-2g_\lambda(t_0)-t_0=0$} so that we can not apply it directly. To overcome this issue, we shall check that we are in position to apply Theorem \ref{thm:lineaire} to the subtree $\T_{\lfloor(t_0-\e) n\rfloor}(\XXb^n)$. This will indeed enable us to conclude since if $V^n_1$ and $V^n_2$ are two independent uniform random vertices of ${\mathscr{T}^n}$, then by Lemma \ref{lem:bunif},
	\[
	\P(\birth_{\tau'_n}(V^n_1) \le(t_0-\e) n) = \P(\birth_{\tau'_n}(V^n_2) \le (t_0-\e) n) \ge 1-\e+o(1) \enskip \text{ as } n\to \infty.
	\]
	{By Skorokhod's representation theorem, we may assume that the convergence \eqref{limite fluide I} holds almost surely. Therefore, the right-hand side of \eqref{eq:assumptions} then holds almost surely on the event that $I^n_k>0$ for all $k\le (t_0-\e)n$ thanks to \eqref{limite fluide I}.} To check that Theorem \ref{thm:lineaire} applies to the subtree $\T_{\lfloor(1-\e)\tau'_n\rfloor}(\XXb^n)$ with $c=(\lambda-1)/(\lambda+1)$, it {thus suffices to show that the left hand side of \eqref{eq:assumptions} holds almost surely on the event that $I^n_k>0$ for all $k\le (t_0-\e)n$ with $A_n= \log \log n$. By Skorokhod's representation theorem it is enough to check} that for all $\e>0$, there exists $\delta >0$ such that on the event {that} $I^n_k>0$ for all {$k\le (t_0-\e) n$},
	\begin{equation}\label{eq demarrage I}
		\max_{\log \log n \le k \le \delta n} \left| \frac{I^n_k}{k} - \frac{\lambda-1}{\lambda+1}\right| \le \e 
	\end{equation}
	{with high probability} as $n\to \infty$. In turn, it is enough to prove that for every $\varepsilon>0$ we have
	\begin{equation}\label{eq demarrage I tilde}
		\max_{\log \log n \le k \le \e n} \left| \frac{\widetilde{I}^n_k}{k} - \frac{\lambda-1}{\lambda+1}\right| \le 2\e 
	\end{equation}
	{with high probability} as $n\to \infty$. {Indeed, note that by definition of $g_\lambda$, we have $g_\lambda'(0)= -\lambda/(\lambda+1)$ so that as $t \to 0$,
\begin{equation}
\label{eq:gbound}		2-2g_\lambda(t)-t  =\frac{\lambda -1}{\lambda+1} t  + o(t).
\end{equation}
		Let $\e>0$. Take $\delta>0$ small enough that for all $t \in [0,\delta]$, we have $\vert2-2g_\lambda(t)-t- ((\lambda-1)/(\lambda+1))t\vert \le \e t$. On the event that $I^n_k>0$ for all $k\le (t_0-\e) n$,
		$$
		\max_{\log \log n \le k \le \delta n} \left| \frac{I^n_k}{k} - \frac{\lambda-1}{\lambda+1}\right|  \le \max_{\log \log n \le k \le \e n} \left| \frac{\widetilde{I}^n_k}{k} - \frac{\lambda-1}{\lambda+1}\right| + \max_{\e n \le k \le \delta n} \left| \frac{\widetilde{I}^n_k}{k} - \frac{\lambda-1}{\lambda+1}\right|.
		$$
		The second term is bounded from above by $2\e$ with high probability as $n\to \infty$ thanks to \eqref{limite fluide I tilde} and \eqref{eq:gbound}. Thus, \eqref{eq demarrage I tilde} implies \eqref{eq demarrage I}.
	}
	The upper bound \eqref{eq demarrage I tilde} follows by combining the  coupling \eqref{eq couplage marches} with the law of large numbers.
	
	We now turn to {(II)} (a) and assume $\lambda_n \gg 1/n$. Let us first establish that if $\delta >0$, then
	\begin{equation}\label{limite fluide jusqu'au temps 1}
		\left(\frac{I^n_{\lfloor nt \rfloor}}{n}\right)_{0\le t \le 1} \cvproba[n] (t)_{0\le t \le 1}
		\qquad \text{and} \qquad \sup_{\log \log n \le k \le (1-\delta)n} \left|\frac{I^n_k}{k}-1\right| \cvproba[n] 0,
	\end{equation}
	where the first convergence holds for the topology of the uniform convergence.
	Let $\delta >0$. Note that for all $k \le (1-\delta)n$ we have $U^n_k \ge \delta n$. As a result, for all $\e >0$, for $n$ large enough, for all $k\le (1-\delta)n$.
	$$
	1-\e \le \frac{\lambda_n U^n_k}{1+\lambda_n U^n_k} \le 1.
	$$
	One can thus obtain the {right-hand side} of \eqref{limite fluide jusqu'au temps 1} by coupling $I^n_k$ with a simple random walk (whose step is $+1$ with probability $1-\e$). {The left-hand side of \eqref{limite fluide jusqu'au temps 1} stems from the right-hand side and from the fact that $I^n$ only makes $\pm 1$ steps, so that for all $k \in \llbracket \lfloor(1-\delta)n \rfloor, n \rrbracket$, we have $\vert I^n_k - I^n_{\lfloor (1-\delta)n \rfloor} \vert \le n\delta$.}
	
	We now explain why \eqref{limite fluide jusqu'au temps 1} implies \eqref{eq:limite fluide presque recursif}.  By \eqref{limite fluide jusqu'au temps 1} {and by definition of the Markov chain $(I^n,U^n)$}, we have  $U^n_n {\le n+1-I^n_n }=o(n)$ with high probability, so that after time $n$, w.h.p.\@ there are $o(n)$ remaining steps of $+1$ for $I^n$ and all the other steps are $-1$. Hence, for the topology of the uniform convergence,
	\[
	\left(\frac{I^n_{\lfloor nt \rfloor}}{n}\right)_{1\le t \le 2} \cvproba (2-t)_{1\le t \le 2},
	\]
	{which} concludes the proof of \eqref{eq:limite fluide presque recursif}. 
	
	We finally turn to {(II)} (b). As in the proof of {(I)} (a), the first two convergences of \eqref{eq:cv Hn cas presque recursif} are proved by applying Theorem \ref{thm:lineaire} to the tree $\T_{\lfloor(2-\e)n \rfloor}(\XXb^n)$ for some $\e>0$ small enough, {with $c=1$ and $A_n= \log \log n$. Indeed, by \eqref{limite fluide jusqu'au temps 1},
		\begin{equation}\label{eq lgn infection cas recursif}
			\sup_{\log \log n \le k \le n} \left|\frac{I^n_k}{k}-1\right| \cvproba[n] 0.
		\end{equation}
		After applying Skorokhod's representation theorem, \eqref{eq lgn infection cas recursif} implies the left hand side of \eqref{eq:assumptions} and the right hand side follows from \eqref{eq:limite fluide presque recursif}.}
		
	 The last convergence of {(II)} (b) is more delicate. The idea is to apply Theorem \ref{thm:lineaire} to the tree $\T_n(\XXb^n)$ and then to show that the vertices added between times $n$ and $\tau'_n$, in a first approximation, do not affect the height. 
	
	More precisely, one can first see that by Theorem \ref{thm:lineaire} the height of $\T_n(\XXb^n)$ divided by $\ln n$ converges in probability towards $e$. {
		Indeed, by \eqref{eq lgn infection cas recursif}, after applying Skorokhod's representation theorem, entails that \eqref{eq:assumptions} holds almost surely for $\T_n(\XXb^n)$ with $c=1$ and $A_n= \log \log n$.
	}
	Next, to understand how the height of ${\mathscr{T}^n}$ behaves compared to $\T_{n}(\XXb^n)$ we need to estimate the quantity
	\[
	\h^n \coloneqq \sum_{i=n}^{\tau'_n-1} \frac{1}{I_i^n} \mathbbm{1}_{\{\XX^n_i=1\}}.
	\]
	Indeed, as we have already seen multiple times, roughly speaking this quantity  represents ``the height'' added between times $n$ and $\tau'_n$. 
	
	{Let us prove that
		\begin{equation}\label{eq convergence des transitions cas presque recursif}
			\sup_{3n/2 \le k \le 2n} \frac{\lambda_n U^n_k}{1+ \lambda_n U^n_k} \le \frac{\lambda_n U^n_{\lfloor 3n/2 \rfloor}}{1+ \lambda_n U^n_{\lfloor 3n/2 \rfloor}} \cvproba[n] 0.
		\end{equation}
		The inequality holds since $(U^n_k)_{k \ge 0}$ is non-increasing. For the above convergence, assume by contradiction that there exists $\e>0$ and an increasing sequence of integers $(n_j)_{j\ge 1}$ such that for all $j\ge 1$,
		$$
		\P \, \left( \frac{\lambda_{n_j} U^{n_j}_{\lfloor 3n_j/2 \rfloor}}{1+ \lambda_{n_j} U^{n_j}_{\lfloor 3n_j/2 \rfloor}} \ge \e \right) \ge \e.
		$$
		On this event, by monotonicity for every $k \in \llbracket n_j , \lfloor 3n_j/2 \rfloor \rrbracket$ we have 
		$$\frac{\lambda_{n_j} U^{n_j}_k}{1 + \lambda_{n_j} U^{n_j}_k} \ge \e.$$
		Then, using a coupling between  $(I^{n_j}_k)_{k\ge n}$ and a simple random walk $(\widetilde{Y}^\e_{k})_{k\ge n_j}$ starting at $I^{n_j}_{n_j}$ at time $n_j$ whose step is $+1$ with probability $\e$ and $-1$ with probability $1-\e$, by the law of large numbers and since $I^n_n/n \to 1$ in probability as $n\to \infty$ by \eqref{eq:limite fluide presque recursif}, one obtains that on the same event,
		$$
		\frac{I^{n_j}_{\lfloor 3n_j/2\rfloor }}{n_j} \ge \frac{\widetilde{Y}^\e_{\lfloor 3n_j/2\rfloor}}{n_j}  \cvproba[n] 1+ \frac{2\e -1 }{2},
		$$
		which is absurd in view of \eqref{eq:limite fluide presque recursif}. This proves \eqref{eq convergence des transitions cas presque recursif}.
		
		Next, let us prove that 
		\begin{equation}\label{eq h n tend vers zero}
			\frac{\h^n}{\log n} \cvproba[n]0.
		\end{equation}
		Define $\hat\XX_i^n = -\XX_{\tau'_n-i+1}^n$ {for all $i \in \llbracket 1, \tau'_n \rrbracket$} and $\hat S_i^n = I_{\tau'_n-i}^n = \hat\XX_1^n + \cdots + \hat\XX_i^n$ for all $i \in \llbracket 0, \tau'_n \rrbracket$. Then
		\[
		\h^n = \sum_{i=1}^{\tau'_n-n} \frac{1}{\hat S_i^n} \mathbbm{1}_{\{\hat\XX^n_i=-1\}}.
		\]
		By \eqref{eq:limite fluide presque recursif}, to prove \eqref{eq h n tend vers zero}, it suffices to prove that 
		\begin{equation}\label{eq somme jusque n sur 2 tend vers zero}
			\frac{1}{\log n} \sum_{i=1}^{\lfloor n/2 \rfloor \wedge (\tau'_n -n)}  \frac{1}{\hat S_i^n} \mathbbm{1}_{\{\hat\XX^n_i=-1\}} \cvproba[n] 0.
		\end{equation}
		For all $\e>0$, for all $n\ge 1$, define the event
		$$
		\mathcal{A}^n_\e = \left\{ \sup_{3n/2 \le k \le 2n} \frac{\lambda_n U^n_k}{1+ \lambda_n U^n_k} \le \e\right\}.
		$$
		By \eqref{eq convergence des transitions cas presque recursif}, we know that
		\begin{equation}\label{eq limite proba A n epsilon tend vers 1}
			\lim_{\e \to 0} \liminf_{n\to \infty} \P(\mathcal{A}^n_\e)  = 1.
		\end{equation}
		Besides, on the event $\mathcal{A}^n_\e$, one can couple the $(\hat{\XX}^n_i)_{1 \le i \le n/2 \wedge (\tau_n -n)}$'s with i.i.d.\@ random variables $(\xi^\e_i)_{i\ge 1}$ such that $\P(\xi^\e_i = -1)= \e$ and $\P(\xi^\e_i = 1) = 1-\e$ so that for all $i \le  n/2\wedge (\tau_n-n)$, we have $\xi^\e_i \le \hat{\XX}^n_i \le 1$. For all $i \ge 0$, let $\hat{Y}^\e_i = \xi^\e_1+ \ldots + \xi^\e_i$. Then, for all $i \le n/2  \wedge (\tau_n-n)$, we have $\hat{Y}^\e_i \le \hat{S}^n_i $. In particular, on the event $\mathcal{A}^n_\e \cap \{\forall i \in \llbracket 1, n/2 \rrbracket , \ \hat{Y}^\e_i >0 \}$,
		$$
		\sum_{i=1}^{\lfloor n/2 \rfloor \wedge (\tau'_n -n)}  \frac{1}{\hat S_i^n} \mathbbm{1}_{\{\hat\XX^n_i=-1\}}
		\le
		\sum_{i=1}^{\lfloor n/2 \rfloor}  \frac{1}{\hat{Y}_i^\e} \mathbbm{1}_{\{\xi^\e_i=-1\}}.
		$$
		But, by the strong law of large numbers, we have
		\begin{equation}\label{eq limite proba Y epsilon toujours plus grand que i sur 2}
			\P\left( \forall i \ge 0, \ \hat{Y}^\e_i \ge  i/2\right)
			\mathop{\longrightarrow}\limits_{\e\to 0} 1 .
		\end{equation}
		For all $\e>0$, on the event $\mathcal{A}^n_\e \cap \{\forall i \ge 0, \hat{Y}^\e_i \ge i/2 \}$, we obtain that 
		$$
		\sum_{i=1}^{\lfloor n/2 \rfloor \wedge (\tau'_n -n)}  \frac{1}{\hat S_i^n} \mathbbm{1}_{\{\hat\XX^n_i=-1\}}
		\le
		\sum_{i=1}^{\lfloor n/2 \rfloor}  \frac{2}{ i} \mathbbm{1}_{\{\xi^\e_i=-1\}}.
		$$
		Besides,
		$$
		\E\left[\sum_{i=1}^{\lfloor n/2 \rfloor}  \frac{2}{i} \mathbbm{1}_{\{\xi^\e_i=-1\}}\right] =2\e \sum_{i=1}^{\lfloor n/2 \rfloor} \frac{1}{i}.
		$$
		Therefore, we obtain that
		$$
		\E \left[\1_{\mathcal{A}^n_{\e} \cap \{\forall i \ge 0, \ \hat{Y}^{\e}_i \ge  i/2\}}\sum_{i=1}^{\lfloor n/2 \rfloor \wedge (\tau'_n -n)}  \frac{1}{\hat S_i^n} \mathbbm{1}_{\{\hat\XX^n_i=-1\}}
		\right] \le 2\e \sum_{i=1}^{\lfloor n/2 \rfloor} \frac{1}{i}.
		$$
		After letting $\e\to 0$, using \eqref{eq limite proba A n epsilon tend vers 1} and \eqref{eq limite proba Y epsilon toujours plus grand que i sur 2}, we deduce \eqref{eq somme jusque n sur 2 tend vers zero}.
	}

	It remains to show how {\eqref{eq h n tend vers zero}} implies that $\Height({\mathscr{T}^n})=\Height(\T_{n}(\XXb^n))+o(\ln n)$. To do so we will use, once again, the coalescence construction and Bennett's inequality. By Algorithm \ref{algo2}, conditionally on $\tau'_n$ and $(I_i^n)_{0\leqslant i\leqslant \tau'_n}$, the height of any new vertex added between time $n$ and $\tau'_n$ is stochastically dominated by $\Height(\T_n(\XXb^n))+\sum_{i=n}^{\tau'_n-1} Y_i$ where $(Y_i^n)_{n\leqslant i\leqslant \tau'_n-1}$ are i.i.d Bernoulli random variables of respective parameters $(1/I_i^n\mathbbm{1}_{\{\XX^n_i=1\}})_{n\leqslant i\leqslant \tau'_n-1}$. Using Bennett's inequality and a union bound{, as in Sec.~\ref{ssec:limitheightBennet},}  for all $\e>0$ we obtain that
	\[
	\proba{\Height({\mathscr{T}^n}) - \Height(\T_n(\XXb^n)) \geq \h^n + \e \ln n \, \middle| \,  {\tau'_n, (I_i^n)_{0\leqslant i\leqslant \tau'_n}} } \leq n\exp\left(-\h^n g\left(\frac{\e \ln n}{\h^n}\right)\right) 
	\]
	where $g(u) = (u+1)\ln(u+1)-u \sim u\ln u$ as $u\to+\infty$. Since $\h^n=o(\ln n)$ {in probability}, the above upper bound goes to $0$ when $n$ goes to $\infty$ and thus $\Height({\mathscr{T}^n})$ is of order $e\ln n$.
	\end{proof}

\section{Perspectives and extensions} \label{oo/oo}

We mention here some perspectives for future research in the direction of studying the impact of freezing in random graph models.

\begin{enumerate}[noitemsep,nolistsep]
\item[(1)] What is the number of ends of $\T_{\infty}({\mathbf{x}})$? 
\item[(2)] It would be interesting to study the evolution of the sizes of the trees in the forests obtained by Algorithm \ref{algo2} (in the particular case where $\Xb_{n}=1$ for every $n \geq 1$ this corresponds to Kingman's coalescent), as well as the evolution of degree vertices (in the particular case where $\Xb_{n}=1$ for every $n \geq 1$ there are nice connections with P\'olya urns). 
\item[(3)] {How small can be the typical height of $\T_{n}(\X)$ when $\tau(\X) \geq n$? We conjecture that for any sequence $(\X^{n})_{n \geq 1}$ such that  $\tau(\X^{n}) \geq n$ for every $n \geq1$, for every $\varepsilon>0$ we have
\[
\proba{\Height(\T_{n}(\X)) \geq (e-\varepsilon) \ln(n)}  \quad \mathop{\longrightarrow}_{n \rightarrow \infty} \quad1.
\]}
{\item[(4)] A natural question is to obtain a limit theorem for the height of the epidemic tree ${\mathscr{T}}^{n}$ in the setting of Theorem \ref{thm:SIR} {(I)} (b) (see Remark \ref{rem:SIR}).}
\item[(5)] It would be very interesting to extend our results to other attachment mechanisms, such as preferential attachment, where new vertices attach to existing vertices proportional to their degree.
\item[(6)] Similarly one may wonder what happens when the frozen vertices are not chosen uniformly, notably when vertices with high degrees are more or less likely to freeze. 
\item[(7)]  An important question arising in the context of growing real-world networks or in the context of infection tracing is how to find the root, or patient zero \cite{CDPCBVY20}. This is  a very active area of research, sometimes called \emph{network archeology} \cite{NK11}. Such questions have been for instance considered for uniform attachment and preferential attachment trees \cite{Hai70,BDG17,LP19}, as well as Bienaym\'e trees \cite{BDG22}.
\end{enumerate}

\bibliographystyle{abbrv}

\end{document}